\newcommand{\R}{\mathbb{R}}
\newtheorem{theorem}{Theorem}
\newtheorem{lemma}{Lemma}
\newtheorem{proposition}{Proposition}
\theoremstyle{definition}
\newtheorem{example}{Example}
\newtheorem{condition}{Condition}
\begin{document}

\title{Recovering a Magnitude-Symmetric Matrix from its Principal Minors}
\author{Victor-Emmanuel Brunel, John Urschel}
\subjclass[2020]{Primary 05C50, 15A15, 15A29.}
\keywords{principal minor assignment problem, cycle space, determinantal point processes.}

\maketitle

\begin{abstract}
  We consider the inverse problem of finding a magnitude-symmetric matrix (matrix with opposing off-diagonal entries equal in magnitude) with a prescribed set of principal minors. This problem is closely related to the theory of recognizing and learning signed determinantal point processes in machine learning, as kernels of these point processes are magnitude-symmetric matrices. In this work, we prove a number of properties regarding sparse and generic magnitude-symmetric matrices. We show that principal minors of order at most $\ell$, for some invariant $\ell$ depending only on principal minors of order at most two, uniquely determines principal minors of all orders. In addition, we produce a polynomial-time algorithm that, given access to principal minors, recovers a matrix with those principal minors using only a quadratic number of queries. Furthermore, when principal minors are known only approximately, we present an algorithm that approximately recovers a matrix, and show that the approximation guarantee of this algorithm cannot be improved in general.
\end{abstract}

\section{Introduction}\label{sec:intro}

In this paper, we study an algebraic problem that is inherently related to the statistical properties of finite determinantal point processes (DPPs). A DPP is a random subset of a finite ground set whose probability distribution is characterized by the principal minors of some fixed matrix. DPPs emerge naturally in many probabilistic settings such as random matrices and integrable systems \cite{borodin2009determinantal}, and have also recently attracted interest in machine learning for their ability to model random choices while being tractable and mathematically elegant \cite{kulesza2012determinantal}. Given a finite set, say, $[N]=\{1,\ldots,N\}$ where $N$ is a positive integer, a DPP is a random subset $Y\subseteq [N]$ such that $P(S\subseteq Y)=\det(K_S)$, for all fixed $S\subseteq [N]$, where $K\in\R^{N\times N}$ is a given matrix called the kernel of the DPP and $K_S=(K_{i,j})_{i,j\in S}$ is the principal submatrix of $K$ associated with the set $S$. Assumptions on $K$ that yield the existence of a DPP can be found in \cite{BrunelSigned} and it is easily seen that uniqueness of the DPP is then automatically guaranteed. For instance, if $I-K$ is invertible ($I$ being the identity matrix), then $K$ is the kernel of a DPP if and only if $K(I-K)^{-1}$ is a $P_0$-matrix, i.e., all its principal minors are nonnegative (see \cite{johnson1995convex} for properties of $P_0$-matrices). When $I-K$ is invertible, the DPP is also an $L$-ensemble, with probability mass function given by $P(Y=S)=\det(L_S)/\det(I+L)$, for all $S\subseteq [N]$, where $L=K(I-K)^{-1}$ \cite[Theorem 5.1]{borodin2009determinantal}. DPPs with a symmetric kernel have attracted a great deal of interest in machine learning because they satisfy a property called negative association, which models repulsive interactions between items \cite{borcea2009negative}. For instance, when applied to recommender systems, DPPs enforce diversity in the items within one basket \cite{gartrell2017low}. Recently, DPPs with non-symmetric kernels have also gained interest in the machine learning community \cite{anari2021simple,BrunelSigned,gartrell2019learning,poulson2020high}.

From a statistical point of view, DPPs raise two essential questions. First, what kernels are equivalent, in the sense that they give rise to one and the same DPP? Second, given observations of independent realisations of a DPP, how to recover its kernel (which, as foreseen by the first question, is not necessarily unique)? These two questions are directly related to the \textit{principal minor assignment problem} (PMA). Given a class of matrices, (1) decide if there exists a matrix with a prescribed list of principal minors, and, if such a matrix does exist, (2) find one such matrix. While the first task is theoretical in nature, the second one is algorithmic and should be solved relatively quickly and using as few queries of the prescribed principal minors as possible. 

(PMA) is well understood for symmetric matrices. Given a symmetric matrix $K_0\in\R^{N\times N}$, the set of symmetric matrices with the same list of principal minors as $K_0$ is exactly given by all the matrices of the form $DK_0D$ for some diagonal matrix $D$ with $1$ and $-1$ on its diagonal \cite{oeding2011set}. Moreover, given access to the list of principal minors of $K_0$, a polynomial-time algorithm that finds a solution is given in \cite{rising2015efficient}, using combinatorial properties of the sparsity graph of $K_0$, which can be deduced from its principal minors. In \cite{urschel2017learning}, by analyzing the cycle space of the sparsity graph, a graph invariant $\ell$ called {\it cycle sparsity} was considered, and it was shown that principal minors of order at most $\ell$ uniquely determine principal minors of all orders. This led to a polynomial-time algorithm that computes a solution using principal minors of minimal length, and an optimal (w.r.t. sample complexity) algorithm for learning the kernel of a symmetric DPP from samples -- i.e., when only a random noisy version of the principal minors is available -- under some mild separability assumptions.

Classes containing non-symmetric matrices have also attracted some interest, yet the problem is significantly harder in general. A matrix $K\in\R^{N\times N}$ is called irreducible if there is no proper subset $S\subset [N]$ such that either $K_{S,\bar S}$ or $K_{\bar S,S}$ is zero, where $\bar S$ is the complement of $S$ and, for all subsets $S,T\subseteq [N]$, $K_{S,T}$ is the submatrix of $K$ obtained by keeping only the rows and the columns whose indices are in $S$ and $T$ respectively. A matrix $K$ is called HL-indecomposable if for all subsets $S\subset [N]$ with $2 \le |S| \le N-2$, the matrix $K_{S,\bar S}$ has rank at least two. 
%In \cite{boussairi2015skew}, the authors prove that if $K,K'\in\R^{N\times N}$, $N\geq 4$, are skew-symmetric, irreducible and HL-irreducible, then they share the same principal minors if and only if $\det K_S=\det K'_S$ for all $S\subseteq [N]$ of size at most four, and that this is equivalent to the existence of an involutory diagonal matrix $D$ ($\pm1$'s on the diagonal) such that either $K'=DKD$ or $K'=DK^\top D$. 
In \cite[Theorem 1]{loewy1986principal}, Loewy proved the following. Let $K,K'\in\R^{N\times N}$ (or more generally, in $F^{N\times N}$ for an arbitrary field $F$), such that $K$ is irreducible and HL-indecomposable. Then, $K$ and $K'$ have the same principal minors if and only if there exists an invertible diagonal matrix $D$ such that either $K'=DKD^{-1}$ or $K'=DK^\top D^{-1}$. In \cite{griffin2006principal}, the authors consider the class of dense matrices whose principal submatrices all have dense Schur complements (i.e., that only contain nonzero entries) and design an iterative algorithm to find a solution in polynomial time. The case of dense skew-symmetric matrices was studied in \cite{boussairi2015skew}, where the authors improved on the more general results of Hartfiel and Loewy \cite{hartfiel1984matrices,loewy1986principal} for this specific class of matrices. In the context of learning signed DPPs, \cite{BrunelSigned} considers matrices that are symmetric in magnitudes, i.e., $|K_{i,j}|=|K_{j,i}|$, for all $i,j \in [N]$. Such matrices are relevant in machine learning applications because they have much more modeling power than symmetric matrices for DPPs. An essential assumption made in \cite{BrunelSigned} is that $K$ is dense (all off-diagonal entries non-zero), which significantly simplifies the analysis. In this work, we consider matrices that are symmetric in magnitudes, as in \cite{BrunelSigned}, but we do not assume that the matrices are dense. In this more general setting, we must  account for different sparsity structures, which also necessitates a combinatorial approach.

\subsection{Our contributions}

We treat both theoretical and algorithmic questions related to recovering a magnitude-symmetric matrix (a matrix $K$ satisfying $|K_{i,j}|=|K_{j,i}|$ for all $i,j \in [N]$) from its principal minors. First, in Section \ref{sec:prin_minors} we define the set of magnitude-symmetric matrices under consideration (sparse matrices with a mild genericity condition on cycles of length four), formally state the questions of interest, make a number of preliminary observations using principal minors of low order, and show key connections to the theory of edge-wise charged ($\epsilon(e)$ equals $+1$ or $-1$ for all edges $e$) graphs\footnote{Such graphs are typically called ``signed graphs." However, in addition to the signing of edges of a graph, we also consider the signing of non-zero entries of a matrix. To avoid ambiguity, we refer to the former as the ``charge" of an edge, and save the term ``sign" exclusively for entries of a matrix.}. In Section \ref{sec:cyclespace}, given a charged graph, we prove a number of combinatorial properties regarding cycle bases for the span of positive cycles (cycles with an even number of edges signed $-1$). In Section \ref{sec:identify} we show that, for a given magnitude-symmetric matrix $K$, the principal minors of length at most $\ell$, for some graph invariant $\ell$ depending only on principal minors of order one and two, uniquely determine principal minors of all orders, and that this quantity $\ell$ is tight (Theorem \ref{thm:ell}). Finally, in Section \ref{sec:eff_alg} we describe an efficient algorithm that, given the principal minors of a magnitude-symmetric matrix, computes a matrix with those principal minors. This algorithm queries only $O(N^2)$ principal minors, all of a bounded order that depends solely on the sparsity of the matrix (Theorem \ref{thm:algorithm}). In addition, in Subsection \ref{sec:noise_alg} we also consider the question of recovery when principal minors are only known approximately, and show that a modification of the aforementioned efficient algorithm, for sufficiently generic matrices and sufficiently small error, recovers the matrix almost exactly (Theorem \ref{thm:apx_alg}). Recovery from approximate principal minors is directly related to learning signed DPPs, as the principal minors of the kernel of a DPP can be well-approximated by independent samples with high probability (using the method of moments).

\subsection{Notation and Definitions}

For any number $N \in \mathbb{N}$, let $[N]=\{1,2,\ldots,N\}$ and $\R^{N\times N}$ be the set of all $N \times N$ real matrices. In this work, we consider matrices $K \in \mathbb{R}^{N \times N}$ satisfying $|K_{i,j}| =|K_{j,i}|$ for $i,j \in [N]$, which we refer to as magnitude-symmetric matrices. If $K\in\R^{N\times N}$ and $S\subseteq [N]$, $K_S:=(K_{i,j})_{i,j\in S}$, and $\Delta_S(K):=\det K_S$ is the principal minor of $K$ associated with the set $S$ ($\Delta_\emptyset(K)=1$ by convention). When it is clear from the context, we simply write $\Delta_S$ instead of $\Delta_S(K)$, and for sets of order at most four, we replace the set itself by its elements, e.g., write $\Delta_{\{1,2,3\}}$ as $\Delta_{1,2,3}$. Similarly, when considering the set difference between a set $S$ and some small set of elements, we may again replace the set by its elements, e.g., write $\Delta_{S\backslash\{1,2,3\}}$ as $\Delta_{S\backslash 1,2,3}$.

In addition, we recall a number of relevant graph-theoretic definitions. In this work, all graphs $G = (V,E)$ are simple, undirected graphs, i.e., $V$ is a nonempty finite set and $E$ is a collection of subsets of $V$ of order two. The vertex set and the edge set of a graph $G$ are denoted by $V(G)$ and $E(G)$, respectively. Given a graph $G=(V,E)$, a subgraph $H$ of $G$, denoted $H \subseteq G$, is a graph $H$ with $V(H)\subseteq V$ and $E(H)\subseteq \{\{i,j\}:i,j\in V(H), \{i,j\}\in E\}$. If $S\subseteq V$, the subgraph of $G$ induced by $S$, denoted $G[S]$, is the subgraph $H \subseteq G$ with $V(H)=S$ and $E(H)=\{\{i,j\}:i,j\in S, \{i,j\}\in E\}$. An {\it articulation point} or {\it cut vertex} of a graph $G$ is a vertex whose removal disconnects the graph. A graph $G$ is said to be {\it two-connected} if it has at least two vertices and has no articulation points. A maximal two-connected subgraph $H$ of $G$ is called a {\it block} of $G$. If a block $H$ is of order two, we say $H$ is a trivial block (as $H$ contains no cycles). Given a graph $G$, a {\it cycle} $C$ is a subgraph of $G$ in which every vertex of $C$ has even degree (within $C$)\footnote{A cycle $C$ may have several connected components.}. A {\it simple cycle} $C$ is a connected subgraph of $G$ in which every vertex of $C$ has degree two. For simplicity, we may sometimes describe $C$ by a traversal of its vertices along its edges, i.e., $C = i_1 \; i_2 \; ... \; i_k \; i_1$. In this description, we have the choice of both the start vertex and the orientation of the cycle. A simple cycle $C$ of a graph $G$ is not necessarily induced, i.e., it does not necessarily hold that $C=G[V(C)]$, and while the cycle itself has all vertices of degree two, this cycle may contain some number of chords in the original graph $G$, and we denote this set of chords by $\gamma(C):=E(G[V(C)]) \backslash E(C)$.

Given any subgraph $H$ of $G$, we can associate with $H$ an incidence vector $\chi_H \in \textsf{GF}(2)^m$, $m = |E|$, where $\chi_H(e) =1$ if and only if $e \in E(H)$, and $\chi_H(e) =0$ otherwise. Here, $\textsf{GF}(2)$ is the field with two elements. Given two subgraphs $H_1,H_2 \subseteq G$, we define their {\it sum} $H_1 + H_2$ as the graph containing all edges in exactly one of $E(H_1)$ and $E(H_2)$ (i.e., their symmetric difference) and no isolated vertices. This corresponds to the graph resulting from the sum of their incidence vectors: $\chi_{H_1+H_2}=\chi_{H_1}+\chi_{H_2}$. The cycle space of $G$ is given by
$$\mathcal{C}(G) = \{ \chi_{C} : C \text{ is a cycle of }G \}\subseteq \textsf{GF}(2)^m.$$
It is a linear space of dimension $\nu := m - N + \kappa(G)$, where $\kappa(G)$ denotes the number of connected components of $G$. The quantity $\nu$ is commonly referred to as the {\it cyclomatic number} or {\it circuit rank}. The {\it cycle sparsity} $\ell$ of the graph $G$ is the smallest number for which the set of incidence vectors $\chi_C$ of cycles of edge length at most $\ell$ spans $\mathcal{C}(G)$.

In this work, we require the use and analysis of graphs $G = ([N],E)$ endowed with a linear Boolean function $\epsilon$ that maps subgraphs of $G$ to $\pm 1$, i.e., $\epsilon(e) \in \{-1,+1\}$ for all $e \in E(G)$ and
$$\epsilon(H) = \prod_{e \in E(H) } \epsilon(e)$$
for all subgraphs $H$. We note that for all subgraphs $H_1,H_2$ of $G$, $\epsilon(H_1+H_2) = \epsilon(H_1)\epsilon(H_2)$. The graph $G$ combined with a linear Boolean function $\epsilon$ is denoted by $G = ([N],E,\epsilon)$ and referred to as a {\it charged} graph. If $\epsilon(H) = +1$ (resp. $\epsilon(H) = -1$), then we say the subgraph $H$ is positive (resp. negative). For simplicity, we often denote $\epsilon(\{i,j\})$, $\{i,j\} \in E(G)$, by $\epsilon_{i,j}$. Given a magnitude-symmetric matrix $K \in \mathbb{R}^{N \times N}$, we define the {\it charged sparsity graph} of $K$ as $G_K = ([N],E,\epsilon)$, where
$$E := \{ i,j \in [N] \, | \, i \ne j, \, |K_{i,j}| \ne 0 \}, \qquad \epsilon(\{i,j\}) := \text{sgn}(K_{i,j} K_{j,i}),$$
and when clear from context, we simply write $G$ instead of $G_K$. We denote the set of the incidence vectors of positive cycles as
$$\mathcal{C}^+(G) = \{ \chi_{C} : C \text{ is a cycle of } G, \, \epsilon(C) = +1 \},$$
which is a linear subspace of $\mathcal C(G)$. As we will see, when $H$ is a block, the space $\mathcal{C}^+(H)$ is spanned by positive simple cycles (Proposition \ref{prop:simple_basis}). We define the {\it simple cycle sparsity} $\ell_+$ of $\mathcal{C}^+(G)$, for a two-connected graph $G$, to be the smallest number for which the set of incidence vectors $\chi_C$ of positive {\it simple} cycles of edge length at most $\ell_+$ spans $\mathcal{C}^+(G)$. If $G$ is not two-connected, then we define $\ell_+$ of $\mathcal{C}^+(G)$ to be the maximum simple cycle sparsity of $\mathcal{C}^+(H)$ over all blocks $H$ of $G$. If $G$ is a trivial block or if it contains no blocks, then, for simplicity, we set $\ell_+ = 2$ (rather than $\ell+ = 0$, which would follow from the above definition).  
%\textcolor{red}{VEB - I think the following sentence is not clear at this point} Unlike $\mathcal{C}(G)$, for $\mathcal{C}^+(G)$ the longest length cycle in a minimal basis depends on whether we consider a basis of cycles or of only simple cycles. 
The study of bases of $\mathcal{C}^+(H)$, for blocks $H$, consisting of incidence vectors of simple cycles constitutes the major graph-theoretic subject of this work, and has connections to the recovery of a magnitude-symmetric matrix from its principal minors.

\section{Principal Minors and Magnitude-Symmetric Matrices}\label{sec:prin_minors}

In this section, we formally describe the problems/questions of interest and make a number of basic observations that motivate the remainder of this work. Using only principal minors of order one and two, the quantities $K_{i,i}$, $|K_{i,j}|$, and the charged sparsity graph $G_K$ can be computed immediately, as $K_{i,i} = \Delta_{i}$ for $i \in [N]$, and  
$$|K_{i,j}| = \sqrt{|\Delta_{i} \Delta_{j} - \Delta_{i,j}|} \quad \text{ and } \quad  \epsilon_{i,j}=\text{sgn}(\Delta_{i} \Delta_{j} - \Delta_{i,j}) \text{ for $i\ne j$.}$$
The main focus of this work is to obtain further information on $K$ using principal minors of order greater than two, identify which principal minors uniquely determine $K$ (up to some equivalence class), and use this information to efficiently recover $K$ from its principal minors. To avoid the unintended cancellation of terms in a principal minor, in what follows we assume that $K$ is {\it generic} in the sense that it satisfies the following condition.
\begin{condition} \label{eqn:gen_prop}
If $K_{i,j} K_{j,k} K_{k,\ell} K_{\ell,i} \ne 0 $ for some distinct $i,j,k,\ell \in [N]$, then
\begin{itemize}
    \item[(i)] $|K_{i,j} K_{k,\ell}| \ne |K_{j,k} K_{\ell,i}|$;
    \item[(ii)] For each $\phi_1,\phi_2,\phi_3 \in \{-1,1\}$, 
    $$\phi_1 K_{i,j} K_{j,k} K_{k,\ell} K_{\ell,i} + \phi_2 K_{i,j} K_{j,\ell} K_{\ell,k} K_{k,i}+\phi_3 K_{i,k} K_{k,j} K_{j,\ell} K_{\ell,i} \ne 0.$$
\end{itemize} 
\end{condition}

The first part of Condition \ref{eqn:gen_prop} implies that the three terms (corresponding to the three distinct cycles on four vertices) in the second part are all distinct in magnitude; the second part strengthens this requirement. 
Condition \ref{eqn:gen_prop} can be thought of as a no-cancellation requirement for four-cycles of principal minors of order four. As we will see, this condition is quite important for the recovery of a magnitude-symmetric matrix from its principal minors. Though, the results of this section, slightly modified, hold under slightly weaker conditions than Condition \ref{eqn:gen_prop}, albeit at the cost of simplicity. This condition rules out dense matrices with an extremely high degree of symmetry, as well as the large majority of rank one matrices, but this condition is satisfied for almost all matrices. If $G_K$ is two-connected, Condition \ref{eqn:gen_prop} implies that $K$ is HL-indecomposable, though the converse is not true (consider, for instance, the adjacency matrix of a signed, unweighted four-cycle $H$ with $\epsilon(H) = -1$).

We denote the set of $N \times N$ magnitude-symmetric matrices satisfying Condition \ref{eqn:gen_prop} by $\mathcal{K}_N$, and, when the dimension is clear from context, we often simply write $\mathcal{K}$. We note that if $K \in \mathcal{K}$, then any matrix $K'$ satisfying $|K'_{i,j}| = |K_{i,j}|$, $i,j \in [N]$, is also in $\mathcal{K}$. In this work, we are interested in the following problems/questions.

\begin{flalign}
    & \text{Given $K \in \mathcal{K}$, what is the minimal $\ell$ such that any $K' \in \mathcal{K}$ with}\label{q2} & \tag{PMA1}  \\
    &\text{$\Delta_S(K) = \Delta_S(K')$ for all $|S| \le \ell$ also satisfies $\Delta_S(K) = \Delta_S(K')$} &\nonumber \\
    &\text{for all $S\subseteq [N]$?} &\nonumber
\end{flalign}
\begin{flalign}
         & \text{Given $K \in \mathcal{K}$, what is the set of $K' \in \mathcal{K}$ that satisfy $\Delta_S(K') = \Delta_S(K)$} & \label{q1} \tag{PMA2}\\
    &\text{for all $S\subseteq [N]$?} &\nonumber 
 \end{flalign}
 \begin{flalign}
         & \text{Given principal minors $\{\Delta_S(K)\}_{S \subset [N]}$ of an unknown matrix $K \in \mathcal{K}$,} \label{q3} \tag{PMA3}\\ & \text{find a matrix $K' \in \mathcal{K}$ that satisfies } \Delta_S(K') = \Delta_S(K) 
    \text{ for all $S\subseteq [N]$.} & \nonumber 
 \end{flalign}
\vspace{2mm}
% \begin{align}
%     \quad& \text{Given $K \in \mathcal{K}$, what is the minimal $\ell$ such that}\label{q2} \\
%     &\text{any $K' \in \mathcal{K}$ with $\Delta_S(K) = \Delta_S(K')$ for all $|S| \le \ell$}\nonumber \\
%     &\text{also satisfies $\Delta_S(K) = \Delta_S(K')$ for all $S\subseteq [N]$?}\nonumber 
% \end{align}

 \eqref{q2} asks for the smallest $\ell$ such that the principal minors of order at most $\ell$ uniquely determine principal minors of all orders. Using properties of $\mathcal{C}^+(G)$ (treated in Section \ref{sec:cyclespace}), in Section \ref{sec:identify} (Theorem \ref{thm:ell}), we show that the answer depends only on $(\Delta_S)_{|S|\le 2}$ and is the simple cycle sparsity $\ell_+$ of $\mathcal{C}^+(G)$. In Section \ref{sec:eff_alg}, we build on this analysis, and produce a polynomial-time algorithm for recovering a matrix in $\mathcal K$ with prescribed principal minors (possibly given up to some error term), hence treating \eqref{q3}. The polynomial-time algorithm (of Subsection \ref{sec:noise_alg}) for recovery given perturbed principal minors has key connections to learning signed DPPs.

 \eqref{q1} asks to which extent we can hope to recover a matrix in $\mathcal K$ from its principal minors. For instance, the transpose operation $K^T$ and the similarity transformation $D K D$, where $D$ is an involutory diagonal matrix (i.e., $\pm 1$'s on the diagonal), both clearly preserve principal minors. In fact, these two operations suitably combined completely define this set. Even though \cite[Theorem 1]{loewy1986principal} already provides an answer to \eqref{q1} in a more general setting (see Section \ref{sec:intro} for details), we give, in Proposition \ref{thm:set}, an alternate proof that is tailored to our framework and reveals insights for the algorithmic aspect of the problem (which we treat in Section \ref{sec:eff_alg}).

To answer \eqref{q2}, we study properties of certain bases of the space $\mathcal{C}^+(G)$, where $G$ is the charged sparsity graph of $K$. We can make a number of observations regarding the connectivity of $G$ and resulting properties of the principal minors of $K$. If $G$ is disconnected, with connected components given by vertex sets $V_1,...,V_k$, then any principal minor $\Delta_S$ satisfies
\begin{equation}\label{eqn:disconn}
    \Delta_S = \prod_{j=1}^k \Delta_{S \cap V_j}.
\end{equation}
In addition, if $G$ has a cut vertex $i$, whose removal results in connected components with vertex sets $V_1,...,V_k$, then any principal minor $\Delta_S$ satisfies
\begin{equation}\label{eqn:biconn}\Delta_S = \sum_{j_1 = 1}^k \Delta_{[\{i\}\cup V_{j_1}]\cap S} \prod_{j_2 \ne j_1} \Delta_{ V_{j_2} \cap S} - (k-1) \Delta_{\{i\} \cap S} \prod_{j =1 }^k \Delta_{ V_{j} \cap S}.
\end{equation}
This implies that the principal minors of $K$ are uniquely determined by principal minors corresponding to subsets of blocks of $G$. Given this property, we focus on matrices $K$ without an articulation point, i.e. $G$ is two-connected. Given results for matrices without an articulation point and Equations (\ref{eqn:disconn}) and (\ref{eqn:biconn}), we can then answer \eqref{q2} in the more general case.

Next, we make an important observation regarding the contribution of certain terms in the Leibniz expansion of a principal minor. Recall that the Leibniz expansion of the determinant is given by
$$\det(K) = \sum_{\sigma \in \mathcal{S}_N} \text{sgn}(\sigma) \prod_{i=1}^N K_{i,\sigma(i)},$$ 
where $\text{sgn}(\sigma)$ is multiplicative over the (disjoint) cycles forming the permutation, and is preserved when taking the inverse of the permutation, or of any cycle therein. Consider now an arbitrary, possibly non-induced, negative ($\epsilon(C)=-1$) simple cycle $C$ (in the graph-theoretic sense) of the sparsity graph $G$, without loss of generality given by $C = 1\; 2 \; ...\; k\; 1$. Consider the sum of all terms in the Leibniz expansion that contains either the cycle $(1 \; 2 \;  ... \; k)$ or its inverse $(k \;  k-1 \; ... \; 1)$  in the associated permutation. Because
$\epsilon(C) = -1$,
$$K_{k,1} \prod_{i=1}^{k-1} K_{i,i+1} + K_{1,k} \prod_{i=2}^{k} K_{i,i-1} = 0,$$
and the sum over all permutations containing the cyclic permutations $(k \;  k-1 \; ... \; 1)$ or $(1 \; 2 \;  ... \; k)$ is zero. Therefore, terms associated with permutations containing negative cycles do not contribute to principal minors.

To illustrate the additional information contained in higher order principal minors $\Delta_S$, $|S|>2$, we first consider principal minors of order three and four. Consider the principal minor $\Delta_{1,2,3}$, given by 
% \textcolor{red}{
% \begin{align*}
%     \Delta_{1,2,3} &= K_{1,1} K_{2,2} K_{3,3} - K_{1,1} K_{2,3} K_{3,2} - K_{2,2} K_{1,3} K_{3,1} - K_{3,3} K_{1,2} K_{2,1} \\
%     &\quad + K_{1,2} K_{2,3} K_{3,1} + K_{3,2} K_{2,1} K_{1,3} \\
%     &= \Delta_1 \Delta_2 \Delta_3 - \Delta_1 \big[ \Delta_2 \Delta_3 - \Delta_{2,3}\big] - \Delta_2 \big[ \Delta_1 \Delta_3 - \Delta_{1,3}\big] - \Delta_3 \big[ \Delta_1 \Delta_2 - \Delta_{1,2}\big] \\
%     &\quad + \big[1 + \epsilon_{1,2} \epsilon_{2,3} \epsilon_{3,1}\big] K_{1,2} K_{2,3} K_{3,1}.
% \end{align*}}
$$\Delta_{1,2,3} = \Delta_1 \Delta_{2,3} + \Delta_2 \Delta_{1,3} + \Delta_3 \Delta_{1,2} - 2 \Delta_1 \Delta_2 \Delta_3 + \big[1 + \epsilon_{1,2} \epsilon_{2,3} \epsilon_{3,1}\big] K_{1,2} K_{2,3} K_{3,1}.$$
If the corresponding graph $G[\{1,2,3\}]$ is not a cycle ($K_{1,2} K_{2,3} K_{3,1} = 0$), or if the cycle is negative ($\epsilon_{1,2} \epsilon_{2,3} \epsilon_{3,1} = -1$), then $\Delta_{1,2,3}$ can be written in terms of principal minors of order at most two, and contains no additional information about $K$. If $G[\{1,2,3\}]$ is a positive cycle, then we can write $K_{1,2} K_{2,3} K_{3,1}$ as a function of principal minors of order at most three,
\begin{equation}\label{eqn:threecycle}
K_{1,2} K_{2,3} K_{3,1} = \Delta_1 \Delta_2 \Delta_3 - \frac{1}{2} \big[ \Delta_1 \Delta_{2,3} + \Delta_2 \Delta_{1,3}  + \Delta_3 \Delta_{1,2}  \big] + \frac{1}{2} \Delta_{1,2,3},
\end{equation}
which allows us to compute $\text{sgn}(K_{1,2} K_{2,3} K_{3,1})$. This same procedure holds for a positive induced simple cycle of any order. In particular, for a positive induced cycle $C = 1\; 2 \; ...\; k\; 1$, we can compute $\text{sgn}\big(K_{k,1} \prod_{i = 1}^{k-1} K_{i,i+1}\big)$ efficiently (in $O(k^3)$ time) using only $\Delta_{[k]}$ and principal minors of order one and two (see \cite[Equation 1 \& Subsection 2.3]{urschel2017learning} for details). However, when a simple cycle is not induced, further analysis is required. To illustrate some of the potential issues for a non-induced positive simple cycle, we consider principal minors of order four.

Consider the principal minor $\Delta_{1,2,3,4}$. By our previous analysis, all terms in the Leibniz expansion of $\Delta_{1,2,3,4}$ corresponding to permutations with cycles of length at most three can be written in terms of principal minors of size at most three. What remains is to consider the sum of the terms in the Leibniz expansion corresponding to permutations containing a cycle of length four (which there are three pairs of, each pair corresponding to the two orientations of a cycle of length four in the graph sense), which we denote by
\begin{align}\label{eqn:z_4cycle}
    Z &= -K_{1,2}K_{2,3} K_{3,4}K_{4,1} - K_{1,2} K_{2,4} K_{4,3} K_{3,1} - K_{1,3} K_{3,2} K_{2,4} K_{4,1} \\
    &\quad - K_{4,3} K_{3,2} K_{2,1} K_{1,4}  - K_{4,2} K_{2,1} K_{1,3} K_{3,4}  - K_{4,2} K_{2,3} K_{3,1} K_{1,4}.\nonumber
\end{align}
If $G[\{1,2,3,4\}]$ does not contain a positive simple cycle of length four, then $Z = 0$ and $\Delta_{1,2,3,4}$ can be written in terms of principal minors of order at most three. If $G[\{1,2,3,4\}]$ has exactly one positive cycle of length four, without loss of generality given by $C:= 1 \; 2 \; 3 \; 4 \; 1$, then 
\begin{equation}\label{eqn:z4_solo}
    Z = -\big[1 + \epsilon(C) \big] K_{1,2} K_{2,3} K_{3,4} K_{4,1} = - 2 K_{1,2} K_{2,3} K_{3,4} K_{4,1},
\end{equation}
and we can write $K_{1,2} K_{2,3} K_{3,4} K_{4,1}$ as a function of principal minors of order at most four, which allows us to compute $\text{sgn}(K_{1,2} K_{2,3} K_{3,4} K_{4,1})$. If there is more than one positive simple cycle of length four, this implies that all simple cycles of length four are positive and $Z$ is given by
\begin{equation}\label{eqn:z4}
    Z =  -2 \big[ K_{1,2}K_{2,3} K_{3,4}K_{4,1} + K_{1,2} K_{2,4} K_{4,3} K_{3,1} + K_{1,3} K_{3,2} K_{2,4} K_{4,1} \big] .\\
\end{equation}
By Condition \ref{eqn:gen_prop}, the magnitude of each of these three terms is distinct, $Z \ne 0$, and we can compute the sign of each of these terms, as there is a unique choice of $\phi_1,\phi_2,\phi_3 \in \{-1,+1\}$ satisfying 
\begin{equation}\label{eqn:z4_phi} \phi_1 |K_{1,2}K_{2,3} K_{3,4}K_{4,1}| + \phi_2 |K_{1,2} K_{2,4} K_{4,3} K_{3,1}| +\phi_3 |K_{1,3} K_{3,2} K_{2,4} K_{4,1}| = - \frac{Z}{2}.
\end{equation}

This completes the analysis of principal minors of order at most four. In order to better understand the behavior of principal minors of order greater than four, we study a number of properties of $\mathcal{C}^+(G)$ in the following section.

\section{Properties of Minimal Cycle Bases of $\mathcal{C}^+(G)$}\label{sec:cyclespace}

In this section we consider $\mathcal{C}^+(G)$, the set of incidence vectors corresponding to positive cycles, and aim to prove a number of results regarding bases consisting of incidence vectors corresponding to simple cycles. The study of various properties of cycle bases of graphs is of broad interest, with wide-ranging applications, and the work in this section, though necessary for our application, is also of independent interest. We refer the reader to \cite{kavitha2009cycle} for a fantastic survey of the subject. In the following two propositions, we compute the dimension of $\mathcal{C}^+(G)$ and note that, when $G$ is two-connected, this space is indeed spanned by incidence vectors corresponding to positive simple cycles.

\begin{proposition}\label{prop:dim}
Let $G = ([N],E,\epsilon)$. Then $\dim(\mathcal{C}^+(G)) = \nu -1$ if $G$ contains at least one negative cycle, and equals $\nu$ otherwise.
\end{proposition}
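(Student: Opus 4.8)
The plan is to exhibit $\mathcal{C}^+(G)$ as the kernel of a $\textsf{GF}(2)$-linear functional on the cycle space $\mathcal{C}(G)$, and then apply the elementary dichotomy that the kernel of a linear functional on a finite-dimensional vector space is either the whole space or a hyperplane of codimension one.

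First I would define $\lambda : \mathcal{C}(G) \to \textsf{GF}(2)$ by $\lambda(\chi_C) = 0$ if $\epsilon(C) = +1$ and $\lambda(\chi_C) = 1$ if $\epsilon(C) = -1$. This is well-defined because $\epsilon(C) = \prod_{e \in E(C)} \epsilon(e)$ depends only on the edge set of $C$, hence only on the incidence vector $\chi_C$. It is $\textsf{GF}(2)$-linear because, as recorded in the setup, $\epsilon(C_1 + C_2) = \epsilon(C_1)\epsilon(C_2)$, which in additive notation mod $2$ reads $\lambda(\chi_{C_1} + \chi_{C_2}) = \lambda(\chi_{C_1}) + \lambda(\chi_{C_2})$. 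Since every vector of $\mathcal{C}(G)$ is the incidence vector of some cycle, we have exactly $\mathcal{C}^+(G) = \ker \lambda$.

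It then remains to split into cases. The functional $\lambda$ vanishes identically if and only if every cycle of $G$ is positive, i.e., if and only if $G$ has no negative cycle; in that case $\mathcal{C}^+(G) = \mathcal{C}(G)$, which has dimension $\nu = m - N + \kappa(G)$. Otherwise $\lambda$ is a nonzero linear functional on the $\nu$-dimensional space $\mathcal{C}(G)$, so by rank-nullity over $\textsf{GF}(2)$ its kernel has dimension $\nu - 1$; and $\lambda$ is nonzero precisely when $G$ contains a negative cycle. (The degenerate case $\nu = 0$, where $G$ has no cycles at all and in particular no negative cycle, is consistent with the stated formula since then $\dim \mathcal{C}^+(G) = 0 = \nu$.)

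I do not anticipate a real obstacle here: the argument is essentially bookkeeping once the functional $\lambda$ is in hand. The only point that needs a moment's care is verifying that $\lambda$ is well-defined on incidence vectors and $\textsf{GF}(2)$-linear, and both follow immediately from the multiplicativity $\epsilon(H_1 + H_2) = \epsilon(H_1)\epsilon(H_2)$ already noted in the definition of a charged graph.
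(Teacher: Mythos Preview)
Your proof is correct and is essentially the same argument as the paper's: both exploit the multiplicativity $\epsilon(C_1+C_2)=\epsilon(C_1)\epsilon(C_2)$ to see that $\mathcal{C}^+(G)$ sits inside $\mathcal{C}(G)$ with codimension $0$ or $1$. The paper carries this out by explicit basis manipulation---replacing each negative basis cycle $x_j$ by the positive sum $x_1+x_j$---whereas you package the same idea as the kernel of a $\textsf{GF}(2)$-linear functional and invoke rank--nullity; the mathematical content is identical.
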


\begin{proof}
Consider a basis $\{x_1,...,x_\nu\}$ for $\mathcal{C}(G)$. If all associated cycles are positive, then $\dim(\mathcal{C}^+(G)) = \nu$ and we are done. If $G$ has a negative cycle, then, by the linearity of $\epsilon$, at least one element of $\{x_1,...,x_\nu\}$ must be the incidence vector of a negative cycle. Without loss of generality, suppose that $x_1,...,x_i$ are incidence vectors of negative cycles. Then $x_1 + x_2,..., x_1 + x_i, x_{i+1},...,x_{\nu}$ is a linearly independent set of incidence vectors of positive cycles. Therefore, $\dim(\mathcal{C}^+(G)) \ge \nu-1$. However, $x_1$ is the incidence vector of a negative cycle, and so cannot be in $\mathcal{C}^+(G)$.
\end{proof}

\begin{proposition}\label{prop:simple_basis}
Let $G = ([N],E,\epsilon)$ be two-connected. Then $$\mathcal{C}^+(G) = \text{span} \{ \chi_C \, | \, C \text{ is a simple cycle of }G, \epsilon(C) = +1 \}.$$
\end{proposition}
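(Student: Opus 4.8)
My plan is as follows. The inclusion $\operatorname{span}\{\chi_C : C\text{ a positive simple cycle}\}\subseteq\mathcal C^+(G)$ is immediate, since by linearity of $\epsilon$ the set $\mathcal C^+(G)$ is a linear subspace of $\mathcal C(G)$ containing $\chi_C$ for every positive simple cycle $C$. For the reverse inclusion it suffices to show that $\chi_C\in\mathcal S:=\operatorname{span}\{\chi_C : C\text{ a positive simple cycle of }G\}$ for every positive cycle $C$ (an even subgraph, in the sense of the excerpt). I would prove this by induction along an open ear decomposition of $G$. If $G$ is the single-edge graph there are no cycles and both sides equal $\{0\}$; otherwise $G$ contains a cycle, and since it is two-connected, Whitney's ear-decomposition theorem gives $G=C^0+P^1+\dots+P^k$, where $C^0$ is a simple cycle, each $P^i$ is a path with distinct endpoints $u_i\ne v_i$ lying in $G_{i-1}:=C^0+\dots+P^{i-1}$ and internally disjoint from $G_{i-1}$, and every $G_i$ is two-connected. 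I induct on $k$.

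For $k=0$, $G=C^0$ is a single simple cycle: if $\epsilon(C^0)=+1$ then $\mathcal C^+(G)=\mathcal C(G)=\langle\chi_{C^0}\rangle=\mathcal S$, and if $\epsilon(C^0)=-1$ then $\mathcal C^+(G)=\{0\}$ (Proposition~\ref{prop:dim}) while $\mathcal S=\{0\}$. For the inductive step, write $G=G'+P$ with $G':=G_{k-1}$ two-connected (so the inductive hypothesis applies to $G'$, which has an ear decomposition with $k-1$ ears) and $P$ the last ear, a path from $u$ to $v$, $u\ne v$, internally disjoint from $G'$. I record two routine structural facts: since every internal vertex of $P$ has degree $2$ in $G$, any cycle of $G$ either avoids $E(P)$ or contains all of $E(P)$; and the simple cycles of $G$ meeting $E(P)$ are exactly the graphs $P+Q$ for $Q$ a simple $u$--$v$ path in $G'$, with $\epsilon(P+Q)=\epsilon(P)\epsilon(Q)$. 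Let $C$ be a positive cycle of $G$. If $E(C)\cap E(P)=\emptyset$, then $C\subseteq G'$ is a positive cycle of $G'$, so $\chi_C\in\mathcal S$ by induction (positive simple cycles of $G'$ are positive simple cycles of $G$). If $E(P)\subseteq E(C)$, I distinguish two cases. If there is a $u$--$v$ path $Q$ in $G'$ with $\epsilon(Q)=\epsilon(P)$, put $D:=P+Q$, a positive simple cycle of $G$; then $W:=C+D$ avoids $E(P)$, so $W\subseteq G'$ is a cycle with $\epsilon(W)=\epsilon(C)\epsilon(D)=+1$, hence $\chi_W\in\mathcal S$ by induction and $\chi_C=\chi_W+\chi_D\in\mathcal S$. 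If no such $Q$ exists, then every $u$--$v$ path in $G'$ has sign $-\epsilon(P)$, so in particular all $u$--$v$ paths in $G'$ share one sign, whence $G'$ has no negative cycle by the auxiliary fact below; but then, fixing any $u$--$v$ path $Q_0$ in $G'$, the cycle $C+(P+Q_0)\subseteq G'$ must be positive, forcing $\epsilon(C)=\epsilon(P)\epsilon(Q_0)=\epsilon(P)\cdot(-\epsilon(P))=-1$, contradicting $\epsilon(C)=+1$. So the last case cannot occur, and the induction is complete.

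The key ingredient, and the step I expect to be the main obstacle, is the following: in a two-connected charged graph $H$ that contains a negative cycle, any two distinct vertices $u,v$ are joined by two paths of opposite sign (equivalently, if all $u$--$v$ paths in $H$ share a sign, then $H$ has no negative cycle). I would prove this by fixing a negative simple cycle $F_0$ of $H$; by Menger's theorem applied to the vertex sets $\{u,v\}$ and $V(F_0)$ — using two-connectivity — there are two vertex-disjoint paths $A_u$ and $A_v$, from $u$ and from $v$ respectively, to distinct vertices $a_1,a_2\in V(F_0)$, internally disjoint from $F_0$ (taking trivial one-vertex paths when $u$ or $v$ already lies on $F_0$). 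Writing $F_0=B_1\cup B_2$ as the union of its two $a_1$--$a_2$ subpaths, so that $\epsilon(B_1)\epsilon(B_2)=\epsilon(F_0)=-1$, the two $u$--$v$ paths obtained by concatenating $A_u$, $B_j$, $A_v$ (for $j=1,2$) are genuine simple paths whose signs multiply to $\epsilon(F_0)=-1$ and hence differ. Beyond this lemma and the two structural facts about cycles through an ear, everything reduces to bookkeeping with symmetric differences and the identity $\epsilon(H_1+H_2)=\epsilon(H_1)\epsilon(H_2)$.
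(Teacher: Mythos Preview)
Your proof is correct and follows essentially the same ear-decomposition strategy as the paper. The one substantive difference is in how the key auxiliary fact (``in a two-connected charged graph with a negative cycle, any two vertices are joined by paths of both signs'') is obtained: the paper builds this into the induction along the ear decomposition, starting the decomposition from a negative simple cycle so that the base case gives the property for free, and then carrying it forward ear by ear; you instead isolate it as a standalone lemma and prove it directly via a Menger/two-disjoint-paths argument to a negative simple cycle. Your organization is a bit more modular (the path-sign lemma is decoupled from the span induction, and you can start the ear decomposition from an arbitrary cycle), while the paper's joint induction yields an explicit simple cycle basis $C_1,\dots,C_{\nu-1}$ as a byproduct, which it uses elsewhere. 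Both arguments are equally valid routes to the proposition.
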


\begin{proof}
If $G$ does not contain a negative cycle, then the result follows immediately, as then $\mathcal{C}^+(G) = \mathcal{C}(G)$. Suppose then that $G$ has at least one negative cycle. Decomposing this negative cycle into the union of edge-disjoint simple cycles, we note that $G$ must also have at least one negative simple cycle.

Since $G$ is a two-connected graph, it admits a proper (also called open) ear decomposition with $\nu -1$ proper ears (Whitney \cite{Whitney1932}, see also \cite{KorteVygen}) starting from any simple cycle. We choose our initial simple cycle to be a negative simple cycle denoted by $G_0$. Whitney's proper ear decomposition states that we can obtain a sequence of graphs $G_0$, $G_1, \cdots, G_{\nu-1}=G$ where $G_{i}$ is obtained from $G_{i-1}$ by adding a path $P_i$ between two distinct vertices $u_i$ and $v_i$ of $V(G_{i-1})$ with its internal vertices not belonging to $V(G_{i-1})$ (a proper or open ear). By construction, $P_i$ is also a path between $u_i$ and $v_i$ in $G_i$. 

We will prove a stronger statement by induction on $i$, namely that for suitably constructed positive simple cycles $C_j$, $j = 1,..., \nu-1$, we have that, for every $i = 0,1,...,\nu - 1$: \begin{enumerate}
    \item[(i)] $\mathcal{C}^+(G_i)=\text{span} \{ \chi_{C_j}: 1\leq j \leq i\}$,
    \item[(ii)] for every pair of distinct vertices $u,v \in V(G_i)$, there exists both a positive path in $G_i$ between $u$ and $v$ and a negative path in $G_i$ between $u$ and $v$.
\end{enumerate}  

For $i=0$, (i) is clear and (ii) follows from the fact that the two paths between $u$ and $v$ in the cycle $G_0$ must have opposite charge since their sum is $G_0$ and $\epsilon(G_0)=-1$. For $i>0$, we assume that we have constructed $C_1, \cdots, C_{i-1}$ satisfying (i) and (ii) for values less than $i$. To construct $C_i$, we take the path $P_i$ between $u_i$ and $v_i$ and complete it with a path of charge $\epsilon(P_i)$ between $u_i$ and $v_i$ in $G_{i-1}$. The existence of this latter path follows from (ii), and these two paths together form a simple cycle $C_i$ with $\epsilon(C_i)=+1$. It is clear that this $C_i$ is linearly independent from all the previously constructed cycles $C_j$ since $C_i$ contains $P_i$ but $P_i$ was not even part of $G_{i-1}$. This implies (i) for $i$. 

To show (ii) for  $G_i$, we need to consider three cases for $u\neq v$. If $u,v\in V(G_{i-1})$, we can use the corresponding positive and negative paths in $G_{i-1}$ (whose existence follows from (ii) for $i-1$). If $u,v\in V(P_i)$, one of the paths can be the subpath $P_{uv}$ of $P_i$ between $u$ and $v$ and the other path can be $P_i\setminus P_{uv}$ together with a path in $G_{i-1}$ between $u_i$ and $v_i$ of charge equal to $-\epsilon(P_{i})$ (so that together they form a negative cycle). Otherwise, we must have $u\in V(P_i)\setminus \{u_i,v_i\}$ and $v\in V(G_{i-1})\setminus\{u_i,v_i\}$ (or vice versa), and we can select the paths to be the path in $P_i$ from $u$ to $u_i$ together with two oppositely charged paths in $G_{i-1}$ between $u_i$ and $v$. In all these cases, we have shown property (ii) holds for $G_i$. 
\end{proof}

For the remainder of the analysis in this section, we generally assume that $G$ contains a negative cycle, otherwise $\mathcal{C}^+(G)=\mathcal{C}(G)$ and we inherit all of the desirable properties of $\mathcal{C}(G)$. Next, we study the properties of simple cycle bases (bases consisting of incidence vectors of simple cycles) of $\mathcal{C}^+(G)$ that are minimal in some sense. We say that a simple cycle basis $\{\chi_{C_1},...,\chi_{C_{\nu-1}}\}$ for $\mathcal{C}^+(G)$ is lexicographically minimal if it lexicographically minimizes $(|C_1|,|C_2|,...,|C_{\nu-1}|)$, i.e., minimizes $|C_1|$, minimizes $|C_2|$ conditional on $|C_1|$ being minimal, etc. It is easy to show that such a lexicographically minimal basis also has the smallest value of $\sum_{i=1}^\nu |C_i|$, as well as of $\max_{1\leq i\leq \nu} |C_i|$, among all possible bases of $\mathcal C^+(G)$: This follows from the optimality of the greedy algorithm for linear matroids. For brevity, we will simply refer to such a basis as ``minimal." One complicating issue is that, while a minimal cycle basis for $\mathcal{C}(G)$ always consists of induced simple cycles, this is no longer the case for $\mathcal{C}^+(G)$. This for example can happen for an appropriately constructed graph with only two negative, short simple cycles, far away from each other; a minimal (not necessarily simple) cycle basis for $\mathcal{C}^+(G)$ then contains a cycle consisting of the union of these two negative simple cycles. Computing a minimal cycle basis for $\mathcal{C}(G)$ is a well-studied problem, with a number of efficient algorithms to do so (see \cite{kavitha2009cycle} for details). In contrast, the complexity of computing a minimal simple cycle basis for $\mathcal{C}^+(G)$ is not known. Computing the shortest positive simple cycle can be reduced to the computation of the shortest even cycle in a graph, which can be done efficiently (see \cite[Subsection 29.11e]{schrijver2003combinatorial} for details). The complexity of extending this cycle to a basis is not clear.

However, we can make a number of statements regarding chords of simple cycles corresponding to elements of a minimal simple cycle basis of $\mathcal{C}^+(G)$. In the following lemma, we show that a minimal simple cycle basis satisfies a number of desirable properties. Before we do so, we introduce some useful notation. Given a simple cycle $C$, it is convenient to fix an orientation, say, $C = i_1 \; i_2 \; ... \; i_k \; i_1$. Given an orientation and any chord $\{i_{k_1},i_{k_2}\} \in \gamma(C)$, $k_1<k_2$, we denote the two cycles created by this chord by
$$C(k_1,k_2)= i_{k_1} \; i_{k_1 + 1} \; ... \; i_{k_2} \; i_{k_1} \quad \text{and} \quad C(k_2,k_1) = i_{k_2} \; i_{k_2+1} \; ... \; i_{k} \; i_1 \; ... \; i_{k_1} \; i_{k_2}.$$
We have the following result.

\begin{lemma}\label{lm:min_basis}
Let $G = ([N],E,\epsilon)$ be two-connected and $C:= i_1 \; i_2 \;  ... \; i_{k} \; i_1$ be a cycle corresponding to an incidence vector of an element of a minimal simple cycle basis $\{x_{1},...,x_{\nu-1}\}$ of $\mathcal{C}^+(G)$. Then
\begin{enumerate}[(i)]
\item $\epsilon\big(C(k_1,k_2)\big) = \epsilon \big(C(k_2,k_1)\big) = -1$ for all $\{i_{k_1},i_{k_2} \} \in \gamma(C)$,\label{it:min1}
\item if $\{i_{k_1}, i_{k_2}\},\{i_{k_3}, i_{k_4}\} \in \gamma(C)$ satisfy $k_1 < k_3 < k_2 < k_4$ ({\it crossing} chords), then either $k_3 - k_1 = k_4 - k_2 = 1$ or $k_1 = 1$, $k_2 - k_3 = 1$, $k_4 = k$, i.e. these two chords form a four-cycle with two edges of the cycle,\label{it:min2}
\item there does not exist $\{i_{k_1}, i_{k_2}\},\{i_{k_3}, i_{k_4}\}, \{i_{k_5}, i_{k_6}\} \in \gamma(C)$ satisfying either $k_1 < k_2 \le  k_3 < k_4 \le k_5 < k_6$ or $k_6 \le k_1 < k_2 \le k_3 < k_4 \le  k_5$.\label{it:min3}
\end{enumerate}
\end{lemma}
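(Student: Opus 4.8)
The three parts share a common structure: each is proven by contradiction, exhibiting a ``swap'' that replaces the cycle $C$ (or an earlier basis cycle) with a strictly shorter positive simple cycle while preserving the spanning property of the basis, contradicting minimality. The key algebraic tool is that $\chi_C = \chi_{C(k_1,k_2)} + \chi_{C(k_2,k_1)}$ for any chord $\{i_{k_1},i_{k_2}\}$, and multiplicativity of $\epsilon$ gives $\epsilon(C) = \epsilon(C(k_1,k_2))\,\epsilon(C(k_2,k_1))$; since $\epsilon(C)=+1$, the two sub-cycles created by a chord are either both positive or both negative.

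For part \ref{it:min1}: suppose some chord $\{i_{k_1},i_{k_2}\}$ splits $C$ into two positive simple cycles $C(k_1,k_2)$ and $C(k_2,k_1)$, both strictly shorter than $C$. We have $\chi_C = \chi_{C(k_1,k_2)} + \chi_{C(k_2,k_1)}$, so in the basis $\{x_1,\dots,x_{\nu-1}\}$, replacing $x_{i_0} = \chi_C$ by whichever of the two shorter positive cycles is not already in the span of the others still gives a basis — and at least one of them must work, since if both were in $\Span\{x_j : j\ne i_0\}$ then so would be their sum $\chi_C$, a contradiction. This produces a basis lexicographically smaller than the minimal one (we replace $|C|$ by something strictly smaller, leaving all shorter cycles untouched), contradiction. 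The first step here should be to argue that both $C(k_1,k_2)$ and $C(k_2,k_1)$ are genuinely simple cycles and strictly shorter — this is immediate since a chord is not an edge of $C$, so each piece omits at least one edge of $C$ and gains exactly one (the chord).

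For parts \ref{it:min2} and \ref{it:min3}: here the contradiction cycles are built not just from $C$ and one chord but from $C$ together with two or three chords, forming a cycle in the $\textsf{GF}(2)$ sense that decomposes into shorter simple cycles. The plan is: (a) use part \ref{it:min1} to pin down the charges of all relevant sub-cycles cut out by the chords, (b) take an appropriate $\textsf{GF}(2)$-combination of $C$ and the chord-cycles to produce a positive cycle that is a union of simple cycles, each strictly shorter than $|C|$, unless we are in the exceptional configuration described in the statement, (c) decompose into simple cycles and invoke the same basis-exchange argument as in part \ref{it:min1}. For \ref{it:min2}, with crossing chords $\{i_{k_1},i_{k_2}\}$ and $\{i_{k_3},i_{k_4}\}$ ($k_1<k_3<k_2<k_4$), the two chords plus the four arcs of $C$ they determine split $C$ into four regions; combining suitably (e.g. the ``middle'' region bounded by both chords together with one arc) yields short cycles, and one checks that avoiding a shorter positive simple cycle forces the two ``corner'' arcs to each have length exactly one — i.e. $k_3-k_1 = k_4-k_2 = 1$ (or the wrap-around variant). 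The charges come from part \ref{it:min1}: each chord alone cuts off negative sub-cycles, and tracking these through the four-region decomposition shows the short cycles one would extract are positive. For \ref{it:min3}, three pairwise non-crossing (nested-or-sequential) chords in the pattern $k_1<k_2\le k_3<k_4\le k_5<k_6$ give, via part \ref{it:min1}, three negative sub-cycles $C(k_1,k_2), C(k_3,k_4), C(k_5,k_6)$; then $\chi_C + \chi_{C(k_1,k_2)} + \chi_{C(k_3,k_4)} + \chi_{C(k_5,k_6)}$ has charge $(+1)(-1)(-1)(-1) = -1$... so instead one combines $C$ with just two of them, say $\chi_C + \chi_{C(k_1,k_2)} + \chi_{C(k_3,k_4)}$, which is positive and, crucially, edge-disjoint from a strict sub-arc of $C$ — one argues it is a union of simple cycles all of length $< |C|$ (the third chord region being what guarantees the length drop), reaching the contradiction.

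The main obstacle will be part \ref{it:min3}, and to a lesser extent the exceptional-configuration bookkeeping in \ref{it:min2}: one must carefully verify, in each combinatorial sub-case, that the $\textsf{GF}(2)$-combination one writes down really does decompose into \emph{simple} cycles that are each \emph{strictly shorter} than $C$ (not merely of total length less than $|C|$, and not accidentally equal to $|C|$), and that the basis-exchange step is legitimate — i.e. that at least one of the resulting simple cycles, when swapped in for $C$, still yields a basis. Handling the wrap-around boundary cases (indices modulo $k$) and the possibility that some arc has length one (so a ``sub-cycle'' degenerates) without breaking the simplicity claims is where the argument is most delicate; I would isolate a short preliminary claim — ``if $\chi_C = \sum_j \chi_{D_j}$ with each $D_j$ a simple positive cycle of length $< |C|$, then $\{x_1,\dots,x_{\nu-1}\}$ was not minimal'' — and reuse it verbatim in all three parts.
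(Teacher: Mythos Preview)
Your plan for (i) and (ii) matches the paper's. For (ii) the paper makes your ``combining suitably'' explicit: it sets $C_1 := C(k_1,k_2) + C(k_3,k_4)$ and $C_2 := C(k_1,k_2) + C(k_4,k_3)$, both positive simple cycles with $C_1 + C_2 = C$ and $|C_1| + |C_2| = |C| + 4$, so minimality forces one of them to have length exactly four, which is precisely the exceptional configuration.

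Part (iii) has a genuine gap. The object $D := C + C(k_1,k_2) + C(k_3,k_4)$ you construct is a \emph{single} positive simple cycle (not a union needing further decomposition), and it is strictly shorter than $C$ --- but that alone does not yield a contradiction. You have $\chi_C = \chi_D + \chi_{C(k_1,k_2)} + \chi_{C(k_3,k_4)}$, and the last two summands are \emph{negative} cycles, so the hypothesis of your preliminary claim is not met. Nor can you simply swap $C$ for $D$: by lexicographic minimality (the greedy/matroid characterization), every positive simple cycle strictly shorter than $C$ already lies in the span of the earlier basis elements, so replacing $\chi_C$ by $\chi_D$ destroys linear independence. Your parenthetical about the third chord ``guaranteeing the length drop'' is also off --- $D$ does not involve the third chord at all, and its length drop comes from the two chords you used. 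The fix, which is exactly the paper's argument, is to take all three pairs: set $C_j := C + \sum_{i\ne j} C(k_{2i-1},k_{2i})$ for $j=1,2,3$. Each $C_j$ is a positive simple cycle with $|C_j| \le |C| - 2$, and in $\textsf{GF}(2)$ one has $C_1 + C_2 + C_3 = C$. Now your preliminary claim applies verbatim.
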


\begin{proof} 
Without loss of generality, suppose that $C = 1 \; 2 \; ... \; k \; 1$. Given a chord $\{k_1,k_2\}$, $C(k_1,k_2) + C(k_2,k_1) = C$, and so $\epsilon(C) = +1$ implies that $\epsilon\big(C(k_1,k_2)\big) = \epsilon\big(C(k_2,k_1)\big)$. If both these cycles were positive, then this would contradict the minimality of the basis, as $|C(k_1,k_2)|,|C(k_2,k_1)|< k$. This completes the proof of Property (\ref{it:min1}).

Given two chords $\{k_1,k_2\}$ and $\{k_3,k_4\}$ satisfying $k_1 < k_3 < k_2 < k_4$, we consider the cycles
\begin{align*}
    C_1 &:= C(k_1,k_2) + C(k_3,k_4), \\
    C_2 &:= C(k_1,k_2) + C(k_4,k_3).
\end{align*}
 By Property (i), $\epsilon(C_1) = \epsilon(C_2)= +1$. In addition, $C_1 + C_2 = C$, and $|C_1|,|C_2|\leq |C|$. By the minimality of the basis, either $|C_1| = |C|$ or $|C_2| = |C|$, which implies that either $|C_1| = 4$ or $|C_2| = 4$ (or both). This completes the proof of Property (\ref{it:min2}).

Given three non-crossing chords  $\{k_1, k_2\}$, $\{k_3, k_4\}$, and $\{k_5,k_6\}$, with $k_1 < k_2 \le k_3 < k_4 \le k_5 < k_6$ (the other case is identical, up to rotation), we consider the cycles \begin{align*}
    C_1 &:= C(k_3,k_4) + C(k_5,k_6) + C, \\
    C_2 &:= C(k_1,k_2) + C(k_5,k_6) + C, \\
    C_3 &:= C(k_1,k_2) + C(k_3,k_4) + C.
\end{align*} 
By Property (i), $\epsilon(C_1) = \epsilon(C_2) = \epsilon(C_3) = +1$. In addition, $C_1 + C_2 + C_3 = C$ and $|C_1|,|C_2|,|C_3|<|C|$, a contradiction to the minimality of the basis. This completes the proof of Property (\ref{it:min3}).
\end{proof}

The proof of the above lemma is algorithmic in nature; in fact, given any simple cycle basis of $\mathcal{C}^+(G)$, one can efficiently derive a basis that satisfies Properties (\ref{it:min1})-(\ref{it:min3}). We make use of this fact in the algorithms of Section \ref{sec:eff_alg} and Appendix \ref{sec:app}. Lemma \ref{lm:min_basis} tells us that in a minimal simple cycle basis for $\mathcal{C}^+(G)$, in each cycle two crossing chords always form a positive cycle of length four (and thus any chord has at most one other crossing chord), and there doesn't exist three non-crossing chords without one chord in between the other two. As we will see in the following lemma, any cycle that satisfies these three properties has quite a rigid structure.

\begin{lemma}\label{lm:span_edge}
Let $C$ be a positive simple cycle of $G=([N],E,\epsilon)$ with $V(C)=\{i_1,i_2,\cdots,i_k\}$ whose chords $\gamma(C)$ satisfy Properties (\ref{it:min1})-(\ref{it:min3}) of Lemma \ref{lm:min_basis}. Then there exist two edges of $C:= i_1 \; i_2 \; ... \; i_k \; i_1$, say, $\{i_1,i_k\}$ and $\{i_\ell, i_{\ell+1}\}$, such that
\begin{enumerate}[(i)]
\item all chords $\{i_p,i_q\} \in \gamma(C)$, $p<q$, satisfy $ p \le \ell < q$, \label{it:span1}
\item any positive simple cycle in $G[V(C)]$ containing $\{i_1,i_k\}$ spans $V(C)$ and contains only edges in $E(C)$ and pairs of crossed chords.\label{it:span2}
\end{enumerate}
\end{lemma}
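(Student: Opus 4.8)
The plan is to analyze the interval structure imposed on $\gamma(C)$ by Properties~(\ref{it:min1})--(\ref{it:min3}). Fix the orientation $C = i_1\,i_2\,\cdots\,i_k\,i_1$ and think of the chords as chords of a convex polygon on vertices $1,\dots,k$ (identifying $i_j$ with $j$). First I would dispose of the trivial case $\gamma(C)=\emptyset$: then any edge works for the role of $\{i_1,i_k\}$, and the cycle $C$ itself is the unique simple cycle in $G[V(C)]$, so both conclusions hold. Assume $\gamma(C)\neq\emptyset$. Property~(\ref{it:min2}) says any two chords that cross must form a $4$-cycle using two cycle-edges, so crossings are extremely restricted — each chord crosses at most one other, and a crossing pair $\{p,q\},\{p',q'\}$ with $p<p'<q<q'$ satisfies either $p'-p=q'-q=1$ (a ``short'' crossing pair near position $p$) or $p=1,\ q-p'=1,\ q'=k$ (the crossing pair ``wrapping around'' the edge $\{i_1,i_k\}$). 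Property~(\ref{it:min3}) says that among any three pairwise non-crossing chords, one must be ``nested between'' the endpoints of another in the sense forbidden there; rephrased positively, the non-crossing chords, viewed as intervals $[p,q]$, cannot contain three pairwise ``side-by-side'' (disjoint-interior, consecutive) intervals.

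The heart of the argument is to show these constraints force the chords to all pass over a single edge $\{i_\ell,i_{\ell+1}\}$ — i.e. part~(\ref{it:span1}). The strategy: among all chords, pick one, say $\{i_p,i_q\}$ with $p<q$, whose ``short side'' $\{p,p+1,\dots,q\}$ is minimal. I would first argue that this short side contains no other chord endpoints except possibly those forming a short crossing pair with $\{i_p,i_q\}$; otherwise an interior chord would give, together with $\{i_p,i_q\}$ and a third chord, a configuration violating~(\ref{it:min3}), or a crossing pattern violating~(\ref{it:min2}). Hence there is an edge $\{i_\ell,i_{\ell+1}\}$ with $p\le \ell<\ell+1\le q$ and no chord separates $i_\ell$ from $i_{\ell+1}$ except by going ``the long way.'' Then for an arbitrary chord $\{i_r,i_s\}$, $r<s$: if it did not satisfy $r\le\ell<s$, both $\ell,\ell+1$ would lie on the same side of it; combining with $\{i_p,i_q\}$ and using minimality of the short side of $\{i_p,i_q\}$, one gets three non-crossing chords in the forbidden arrangement of~(\ref{it:min3}), or else a crossing not of the permitted $4$-cycle type. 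This is the step I expect to be the main obstacle: carefully enumerating the positions of a would-be ``bad'' chord relative to $\{i_p,i_q\}$ and the edge $\{i_\ell,i_{\ell+1}\}$, and in each case producing either a violation of~(\ref{it:min2}) or of~(\ref{it:min3}). Taking the edge $\{i_1,i_k\}$ to be a cycle-edge on the opposite side (the side not between $i_\ell$ and $i_{\ell+1}$ of the minimal chord, which exists after re-indexing) gives the two distinguished edges, with $\{i_1,i_k\}$ crossed by no chord.

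For part~(\ref{it:span2}), let $C'$ be any positive simple cycle in $G[V(C)]$ containing the edge $\{i_1,i_k\}$. Since by~(\ref{it:span1}) every chord ``covers'' the edge $\{i_\ell,i_{\ell+1}\}$ but none covers $\{i_1,i_k\}$, I would argue as follows. Consider $C + C' \in \mathcal{C}(G[V(C)])$; this is a positive cycle not containing $\{i_1,i_k\}$, hence lives in the graph $G[V(C)]$ minus that edge, which is $C$'s path $i_1\,i_2\cdots i_k$ together with the chords. In that graph, Property~(\ref{it:min1}) (every chord splits $C$ into two \emph{negative} arcs) together with the fact that all chords cover $\{i_\ell,i_{\ell+1}\}$ means: any cycle using a chord $\{i_p,i_q\}$ must also use the portion of the boundary path on one side, and charge/parity bookkeeping forces $C'$ to start at $i_1$, walk monotonically (possibly taking a crossed-chord ``shortcut'' in place of two boundary edges at a short-crossing location), and end at $i_k$ — never skipping a vertex, since skipping would require a chord that jumps over $\{i_\ell,i_{\ell+1}\}$ without returning, contradicting that $C'$ is a single simple cycle through $\{i_1,i_k\}$. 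Concretely, I would induct along the positions $1,2,\dots,k$: at each vertex $i_j$ reached, $C'$ either proceeds to $i_{j+1}$ along $E(C)$ or uses a chord; by~(\ref{it:span1}) any chord from $i_j$ goes to some $i_{j'}$ with the edge $\{i_\ell,i_{\ell+1}\}$ strictly between, and for $C'$ to return and close up through $\{i_1,i_k\}$ without creating a non-simple or disconnected configuration, that chord must be one of a crossed pair spanning just two edges (the only ``local'' chords permitted by~(\ref{it:min2})), after which $C'$ resumes at $i_{j'}$ or its neighbor. Hence $V(C')=V(C)$ and $E(C')\subseteq E(C)\cup\{\text{crossed-chord pairs}\}$, as claimed.
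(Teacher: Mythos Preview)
Your outline for Part~(\ref{it:span1}) has a real gap. You pick a chord $\{i_p,i_q\}$ with minimal ``short side'' $[p,q]$ and then assert that any other chord $\{i_r,i_s\}$ not covering your chosen $\ell\in[p,q-1]$ must, together with $\{i_p,i_q\}$ and some third chord, violate Property~(\ref{it:min2}) or~(\ref{it:min3}). But Property~(\ref{it:min3}) needs \emph{three} chords. If $\gamma(C)$ consists of just two non-crossing chords with disjoint intervals in your initial labeling --- say $\{i_3,i_5\}$ and $\{i_6,i_8\}$ on a $10$-cycle --- your argument produces no contradiction, yet no $\ell$ works in that labeling. The paper sidesteps this by choosing the labeling \emph{first}: it places $\{i_1,i_k\}$ on the short arc of the globally shortest $C(i,j)$. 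In the resulting linear order the minimal chord's interval $[a,b]$ is the \emph{long} arc, and minimality alone forces every other chord to have at least one endpoint in $[a,b]$ (a chord with both endpoints in $[1,a-1]$ or $[b+1,k]$ would have a strictly shorter arc). Only then does Property~(\ref{it:min3}) finish the job, ruling out two chords with disjoint sub-intervals of $[a,b]$. Your placement --- $\{i_\ell,i_{\ell+1}\}$ on the short side, $\{i_1,i_k\}$ on the long side --- is inverted relative to this, which is why the two-chord case escapes you.

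For Part~(\ref{it:span2}) you are missing the observation that makes the argument short. Property~(\ref{it:min1}) of Lemma~\ref{lm:min_basis} says each single-chord cycle $C(p,q)$ is negative; since $\{\chi_C\}\cup\{\chi_{C(p,q)}:\{i_p,i_q\}\in\gamma(C)\}$ is a basis for $\mathcal{C}(G[V(C)])$, the charge of \emph{any} cycle $C'\subseteq G[V(C)]$ is exactly $(-1)^{|E(C')\cap\gamma(C)|}$. So ``$C'$ positive'' becomes ``$C'$ uses an even number of chords.'' The paper then argues: if those chords are not all crossed pairs, one can find two chords $\{i_{p_1},i_{q_1}\},\{i_{p_2},i_{q_2}\}$ in $E(C')$ with $p_1\le p_2\le\ell<q_2<q_1$ such that neither is crossed by any other chord of $C'$; but then there is no $i_1$--$i_{q_2}$ path in $C'$ avoiding both $i_{p_1}$ and $i_{q_1}$, contradicting that $\{i_{p_1},i_{q_1}\}$ lies on only one of the two arcs of $C'$ between $i_1$ and $i_{q_2}$. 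Your $C+C'$ and inductive-walk sketch is gesturing at the right picture, but without the parity-of-chords fact it is hard to make the ``charge bookkeeping'' and the monotone-walk claims precise.
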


\begin{proof}
We first note that Property (\ref{it:min1}) of Lemma \ref{lm:min_basis} implies that the charge of a cycle $C'\subseteq G[V(C)]$ corresponds to the parity of $E(C') \cap \gamma(C)$, i.e., if $C'$ contains an even number of chords then $\epsilon(C') = +1$, otherwise $\epsilon(C') = -1$. This follows from constructing a cycle basis for $\mathcal{C}\big(G[V(C)]\big)$ consisting of incidence vectors corresponding to $C$ and $C(i,j)$ for every chord $\{i,j\} \in \gamma(C)$.

Let the cycle $C(i,j)$ be a shortest cycle in $G[V(C)]$ containing exactly one chord of $C$. If $\{i,j\}$ is a non-crossed chord, then we take an arbitrary edge in $E(C(i,j)) \backslash \{i,j\}$ to be $\{i_1,i_k\}$. If this chord crosses another, denoted by $\{i',j'\}$, then either $C(i',j')$ or $C(j',i')$ is also a shortest cycle (possibly both are). If  $C(i',j')$ and $C(j',i')$ are both shortest cycles, then $\gamma(C)$ consists only of the pair of crossing chords $\{i,j\}$ and $\{i',j'\}$, and the result follows immediately. If only one is a shortest cycle, w.l.o.g. $C(i',j')$, then we take an arbitrary edge in $E(C(i,j))\cap E(C(i',j'))$. Because of Property (\ref{it:min3}) of Lemma \ref{lm:min_basis}, there exists $\ell$ such that all chords $\{i_p,i_q\}\in \gamma(C)$, $p<q$, satisfy $p\leq \ell<q$. This completes the proof of Property (\ref{it:span1}).

%Without loss of generality, let $\{1,k\}$ denote this edge and let the cycle be $C = 1 \; 2 \; ... \; k \; 1$, where for simplicity we have assumed that $V(C)=[k]$. Because of Property (\ref{it:min3}) of Lemma \ref{lm:min_basis}, there exists $\ell$ such that all chords $\{i,j\}\in \gamma(C)$ satisfy $i\leq \ell$ and $j>\ell$. This completes the proof of Property (\ref{it:span1}).

Next, consider an arbitrary positive simple cycle $C'$ in $G[V(C)]$ containing $\{i_1,i_k\}$. We aim to show that this cycle contains only edges in $E(G)$ and pairs of crossed chords, from which the equality $V(C') = V(C)$ immediately follows. Since $C'$ is positive, it contains an even number of chords $\gamma(C)$, and either all the chords $\gamma(C) \cap E(C')$ are pairs of crossed chords or there exist two chords $\{i_{p_1},i_{q_1}\},\{i_{p_2},i_{q_2}\} \in \gamma(C) \cap E(C')$, w.l.o.g. $p_1 \le p_2 \le \ell < q_2 < q_1$ (we can simply reverse the orientation if $q_1=q_2$), that do not cross any other chord in $\gamma(C) \cap E(C')$. In this case, there would be a $i_1-i{q_2}$ path in $C'$ neither containing $i_{p_1}$ nor $i_{q_1}$ (as $\{i_{p_1},i_{q_1}\}$ would be along the other path between $i_1$ and $i_{q_2}$ in $C'$ and $q_1\neq q_2$). However, this is a contradiction as no such path exists in $C'$, since $\{i_{p_1},i_{q_1}\}$ does not cross any chord in $\gamma(C) \cap E(C')$. Therefore, the chords $\gamma(C) \cap E(C')$ are all pairs of crossed chords. Furthermore, since all pairs of crossed chords must define cycles of length four, we have $V(C')=V(C)$. This completes the proof of Property (\ref{it:span2}).
\end{proof}

Although this is not needed in what follows, observe that, in the above Lemma \ref{lm:span_edge}, $\{i_\ell,i_{\ell+1}\}$ also plays the same role as $\{i_1,i_k\}$ in the sense that any positive simple cycle containing $\{i_\ell,i_{\ell+1}\}$ also spans $V(C)$ and contains only pairs of crossed chords and edges in $E(C)$. In the following section, we make use of the key structural properties (in Lemma  \ref{lm:span_edge}) of minimal cycle bases to answer  \eqref{q2}.

\section{Identifying a Matrix using Principal Minors of Minimal Order}\label{sec:identify}

In this section, using a simple cycle basis of $\mathcal{C}^+(G)$ whose elements satisfy the properties of Lemma \ref{lm:min_basis}, we aim to show that the principal minors of order at most the length of the longest cycle in the basis uniquely determine the principal minors of all orders, thus answering \eqref{q2}. For a cycle $C$, let
$$s(C):=\prod_{\substack{\{i,j\} \in E(C),\\ i<j}}  \text{sgn} \big( K_{i,j} \big).$$
As previously illustrated, when $C$ is a positive simple cycle of length at most four, $s(C)$ can be computed efficiently using only principal minors corresponding to subsets of $V(C)$. In a series of two lemmas, we aim to show that, for an arbitrary cycle $C$ in our basis, we can compute $s(C)$ using $\Delta_{V(C)}$, $(\Delta_S)_{|S|\le 2}$, 
$s(C')$ for simple positive cycles $|C'|\le 4$, and at most eight other principal minors of subsets of $V(C)$. Once we have computed $s(C_i)$ for every simple cycle in our basis, we can compute $s(C)$ for any simple positive cycle by writing $C$ as a sum of some subset of the simple cycles $C_1,...,C_{\nu-1}$ corresponding to incidence vectors in our basis, and taking the product of $s(C_i)$ for all indices $i$ in the aforementioned sum. As we will later see (in Theorem \ref{thm:ell}), this is enough to recover $K$ (up to some equivalence class defined in Proposition \ref{thm:set}). To this end, we prove the following two lemmas.

\begin{lemma}\label{lm:minor_exp}
Let $K \in \mathcal{K}$ have charged sparsity graph $G = ([N],E,\epsilon)$, and $C = i_1 \; ... \; i_k \; i_1$ be a positive simple cycle of $G$ whose chords $\gamma(C)$ satisfy Properties (\ref{it:min1})-(\ref{it:min3}) of Lemma \ref{lm:min_basis}, and with vertices ordered as in Lemma \ref{lm:span_edge}. Then the principal minor corresponding to $S = V(C)$ is given by
\begin{align*}
    \Delta_S &= \Delta_{i_1} \Delta_{S\backslash i_1} + \Delta_{i_k}\Delta_{S\backslash i_k} + \big[ \Delta_{i_1,i_k} - 2\Delta_{i_1} \Delta_{i_k}\big]\Delta_{S\backslash i_1,i_k} \\
     &\quad -2 K_{i_1,i_{k-1}} K_{i_{k-1},i_k} K_{i_k,i_2} K_{i_2,i_1} \Delta_{S \backslash i_1,i_2,i_{k-1},i_k} \\
    &\quad + \big[ \Delta_{i_1,i_{k-1}} - \Delta_{i_1} \Delta_{i_{k-1}}\big]\big[ \Delta_{i_2,i_k} - \Delta_{i_2} \Delta_{i_k}\big]\Delta_{S \backslash i_1,i_2,i_{k-1},i_k} \\
    &\quad + \big[ \Delta_{i_1,i_2} - \Delta_{i_1} \Delta_{i_2}\big]\big[\Delta_{S\backslash i_1,i_2} - \Delta_{i_k} \Delta_{S\backslash i_1,i_2,i_k} \big] \\
    &\quad + \big[ \Delta_{i_{k-1},i_k} - \Delta_{i_{k-1}} \Delta_{i_k}\big]\big[\Delta_{S\backslash i_{k-1},i_k} - \Delta_{i_1} \Delta_{S\backslash i_1,i_{k-1},i_k} \big] \\
    &\quad -\big[ \Delta_{i_1,i_2} - \Delta_{i_1} \Delta_{i_2}\big]\big[ \Delta_{i_{k-1},i_k} - \Delta_{i_{k-1}} \Delta_{i_k}\big]\Delta_{S \backslash i_1,i_2,i_{k-1},i_k} \\
    &\quad + Z.
\end{align*}
where $Z$ is the sum of terms in the Leibniz expansion of $\Delta_{S}$ corresponding to permutations where $\sigma(i_1) = i_{k}$ or  $\sigma(i_{k}) = i_1$, but not both.
\end{lemma}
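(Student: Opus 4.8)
The plan is to expand $\Delta_S$ by the Leibniz formula and to organize the permutations $\sigma\in\mathcal{S}_S$ according to how $\sigma$ acts near the distinguished edge $\{i_1,i_k\}$, invoking Lemmas \ref{lm:min_basis} and \ref{lm:span_edge} to control the permutations that contribute beyond the low-order terms. Throughout I use the observation from Section \ref{sec:prin_minors} that any permutation one of whose cycles is a negative graph-cycle contributes $0$, so only permutations all of whose cycles are positive simple cycles of $G[S]=G[V(C)]$ need be tracked. I assume $k\ge 5$; the cases $k\le 4$ are the explicit computations of Section \ref{sec:prin_minors}.

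First I partition $\mathcal{S}_S$ into five classes according to the action on $i_1$ and $i_k$: $(\mathrm A)$ $\sigma(i_1)=i_1$; $(\mathrm B)$ $\sigma(i_k)=i_k$; $(\mathrm C)$ $\{i_1,i_k\}$ is a $2$-cycle of $\sigma$; $(\mathrm Z)$ exactly one of $\sigma(i_1)=i_k$, $\sigma(i_k)=i_1$ holds; and $(\mathrm R)$ the remaining permutations, for which $\sigma(i_1),\sigma^{-1}(i_1)\notin\{i_1,i_k\}$ and $\sigma(i_k),\sigma^{-1}(i_k)\notin\{i_1,i_k\}$; one checks these classes are disjoint and exhaustive. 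Summing over $(\mathrm A)$ factors $K_{i_1,i_1}=\Delta_{i_1}$ out of each term and gives $\Delta_{i_1}\Delta_{S\backslash i_1}$; $(\mathrm B)$ gives $\Delta_{i_k}\Delta_{S\backslash i_k}$; the overlap $(\mathrm A)\cap(\mathrm B)$ (both fixed) gives $\Delta_{i_1}\Delta_{i_k}\Delta_{S\backslash i_1,i_k}$; and $(\mathrm C)$ gives $\text{sgn}\big((i_1\,i_k)\big)K_{i_1,i_k}K_{i_k,i_1}\Delta_{S\backslash i_1,i_k}=-K_{i_1,i_k}K_{i_k,i_1}\Delta_{S\backslash i_1,i_k}$. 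By inclusion--exclusion together with $\Delta_{i_1,i_k}=\Delta_{i_1}\Delta_{i_k}-K_{i_1,i_k}K_{i_k,i_1}$, the combined contribution of $(\mathrm A)$, $(\mathrm B)$, $(\mathrm C)$ is precisely the first line of the asserted identity, while the contribution of $(\mathrm Z)$ is by definition $Z$. It remains to show that the sum over $(\mathrm R)$ equals the remaining six terms.

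The heart of the proof is the analysis of class $(\mathrm R)$. For a contributing $\sigma\in(\mathrm R)$ one has $\sigma(i_1),\sigma^{-1}(i_1)\in N_{G[V(C)]}(i_1)\setminus\{i_k\}=\{i_2\}\cup\{i_r:\{i_1,i_r\}\in\gamma(C)\}$, and symmetrically at $i_k$. Using Property (\ref{it:span1}) of Lemma \ref{lm:span_edge} (all chords straddle the index $\ell$), Properties (\ref{it:min1})--(\ref{it:min3}) of Lemma \ref{lm:min_basis}, and the fact --- which follows from Lemma \ref{lm:span_edge}(\ref{it:span2}) --- that no pair of crossing chords can ``cover'' either distinguished edge $\{i_1,i_k\}$ or $\{i_\ell,i_{\ell+1}\}$, I will show that every such $\sigma$ belongs to one of four subclasses of $(\mathrm R)$: $U$, on which $(i_1\,i_2)$ is a $2$-cycle of $\sigma$; $V$, on which $(i_{k-1}\,i_k)$ is a $2$-cycle of $\sigma$; $W$, on which both $(i_1\,i_{k-1})$ and $(i_2\,i_k)$ are $2$-cycles of $\sigma$; and $X$, on which $\{i_1,i_2,i_{k-1},i_k\}$ is a $\sigma$-$4$-cycle, necessarily $(i_1\,i_{k-1}\,i_k\,i_2)$ or its inverse $(i_1\,i_2\,i_k\,i_{k-1})$; in the latter two cases these four vertices carry the positive four-cycle formed by the $C$-edges $\{i_1,i_2\},\{i_{k-1},i_k\}$ and the chords $\{i_1,i_{k-1}\},\{i_2,i_k\}$. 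The driving mechanism is that whenever the $\sigma$-cycle through $i_1$ (or $i_k$) leaves the local set $\{i_1,i_2,i_{k-1},i_k\}$ --- which, since $i_1$ and $i_k$ have so few neighbours in $V(C)$, can happen only via a chord --- the severe restriction on the neighbours of the other distinguished vertex forces its $\sigma$-cycle to close up as a $2$-cycle with its $C$-neighbour, placing $\sigma$ in $U$ or $V$; the residual case in which neither cycle leaves the local set is enumerated directly and yields $W$ and $X$. One also checks the converse, that $U,V,W,X\subseteq(\mathrm R)$, that $W$ and $X$ are disjoint from each other and from $U\cup V$, and that $U\cap V$ is exactly $\{\sigma\in(\mathrm R):(i_1\,i_2)\text{ and }(i_{k-1}\,i_k)\text{ are }2\text{-cycles of }\sigma\}$. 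This structural reduction --- ruling out all other routings of a positive simple cycle of $G[V(C)]$ through $i_1$ or $i_k$ that avoid the edge $\{i_1,i_k\}$ --- is the delicate part of the argument, and is the step where the rigidity of Lemmas \ref{lm:min_basis} and \ref{lm:span_edge} is indispensable.

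It remains to evaluate the sums over $U,V,W,X$ and $U\cap V$. Writing $T_\bullet=\sum_{\sigma\in\bullet}\text{sgn}(\sigma)\prod_{i\in S}K_{i,\sigma(i)}$ and using multiplicativity of $\text{sgn}$, that a transposition has sign $-1$ and a $4$-cycle sign $-1$, that the two orientations of the positive four-cycle contribute equally, and the identity $\Delta_{a,b}-\Delta_a\Delta_b=-K_{a,b}K_{b,a}$, one obtains
\[
T_U=\big[\Delta_{i_1,i_2}-\Delta_{i_1}\Delta_{i_2}\big]\big[\Delta_{S\backslash i_1,i_2}-\Delta_{i_k}\Delta_{S\backslash i_1,i_2,i_k}\big],\qquad T_X=-2\,K_{i_1,i_{k-1}}K_{i_{k-1},i_k}K_{i_k,i_2}K_{i_2,i_1}\,\Delta_{S\backslash i_1,i_2,i_{k-1},i_k},
\]
together with $T_V$ the expression symmetric to $T_U$ under interchanging $(i_1,i_2)$ with $(i_{k-1},i_k)$, $T_W=\big[\Delta_{i_1,i_{k-1}}-\Delta_{i_1}\Delta_{i_{k-1}}\big]\big[\Delta_{i_2,i_k}-\Delta_{i_2}\Delta_{i_k}\big]\Delta_{S\backslash i_1,i_2,i_{k-1},i_k}$, and $T_{U\cap V}=\big[\Delta_{i_1,i_2}-\Delta_{i_1}\Delta_{i_2}\big]\big[\Delta_{i_{k-1},i_k}-\Delta_{i_{k-1}}\Delta_{i_k}\big]\Delta_{S\backslash i_1,i_2,i_{k-1},i_k}$. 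Since $W$ and $X$ meet no other class and $U\cap V$ is the only further overlap, the sum over $(\mathrm R)$ equals $T_U+T_V+T_W+T_X-T_{U\cap V}$, which is exactly the six remaining terms of the statement, completing the proof. I expect the main obstacle to be the structural reduction in the third paragraph; by comparison the block-determinant evaluations and inclusion--exclusion bookkeeping here are routine.
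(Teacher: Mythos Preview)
Your proposal is correct and follows essentially the same approach as the paper's proof. Both arguments organize the Leibniz expansion according to how $\sigma$ acts on the distinguished vertices $i_1,i_k$, reduce the ``easy'' cases to the first line via inclusion--exclusion, and then analyze the residual class (your $(\mathrm R)$, the paper's $X_{*,*}$) by showing that the only contributing permutations are those you list as $U,V,W,X$; the paper's $Y_1,\ldots,Y_5$ are exactly $X,W,U\cap V,U\setminus V,V\setminus U$, so the two decompositions coincide after trivial rearrangement. One cosmetic slip: you call $(\mathrm A),(\mathrm B),(\mathrm C),(\mathrm Z),(\mathrm R)$ a ``partition'' and say they are ``disjoint,'' but then correctly use that $(\mathrm A)\cap(\mathrm B)\ne\emptyset$; you should phrase this as a cover with inclusion--exclusion from the outset.
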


\begin{proof}
Without loss of generality, suppose that $C = 1 \; 2 \; ... \; k \; 1$. Each term in $\Delta_S$ corresponds to a partition of $S = V(C)$ into a disjoint collection of vertices, pairs corresponding to edges of $G[S]$, and simple cycles $C_i$ of $G[S]$ which can be assumed to be positive. We can decompose the Leibniz expansion of $\Delta_S$ into seven sums of terms, based on how the associated permutation for each term treats the elements $1$ and $k$. Let $X_{j_1,j_2}$, $j_1,j_2 \in \{1,k,*\}$, equal the sum of terms corresponding to permutations where $\sigma(1) = j_1$ and $\sigma(k) = j_2$, where $*$ denotes any element in $\{2,...,k-1\}$. 
The case $\sigma(1) =\sigma(k)$ obviously cannot occur (i.e., $X_{1,1} = X_{k,k} = 0$), and so
$$\Delta_{S} = X_{1,k} + X_{1,*} + X_{*,k} + X_{k,1} + X_{k,*} + X_{*,1} + X_{*,*}.$$
By definition, $Z= X_{k,*} + X_{*,1}$. What remains is to compute the remaining five terms. We have
\begin{align*}
    X_{k,1} &= - K_{1,k} K_{k,1} \Delta_{S\backslash 1,k} = \big[ \Delta_{1,k} - \Delta_{1} \Delta_{k}\big]\Delta_{S\backslash 1,k}, \\
    X_{1,k} &=\Delta_{1} \Delta_{k} \Delta_{S\backslash 1,k}, \\
    X_{1,k} + X_{1,*} &= \Delta_{1} \Delta_{S\backslash 1}, \\
    X_{1,k} + X_{*,k} &= \Delta_{k}\Delta_{S\backslash k},
\end{align*}

and so
$$\Delta_{S} = \Delta_{1} \Delta_{S\backslash 1} + \Delta_{k}\Delta_{S\backslash k} + \big[ \Delta_{1,k} - 2\Delta_{1} \Delta_{k}\big]\Delta_{S\backslash 1,k}+ X_{*,*} + Z.$$

The sum $X_{*,*}$ corresponds to permutations where $\sigma(1) \notin \{1,k\}$ and $\sigma(k) \notin\{1,k\}$. We note that, by Property (\ref{it:min2}) of Lemma \ref{lm:min_basis}, either both $1$ and $k$ contain exactly one chord incident to them, and these two chords are $\{1,k-1\}$ and $\{2,k\}$, or at most one of $1$ and $k$ contains a chord incident to them. We first show that the only permutations that satisfy these two properties take the form $\sigma^2(1) =1$ or $\sigma^2(k) = k$ (or both), or contain both $1$ and $k$ in the same cycle. Suppose to the contrary, that there exists some permutation where $1$ and $k$ are not in the same cycle, and both are in cycles of length at least three. Then in $G[S]$ both vertices $1$ and $k$ contain a chord emanating from them. Therefore, each has exactly one chord and these chords are $\{1,k-1\}$ and $\{2,k\}$. The cycle containing $1$ must also contain $2$ and $k-1$, and the cycle containing $k$ must also contain $2$ and $k-1$, a contradiction. 

In addition, we note that, by the above analysis, the only cycles (of a permutation) that can contain both $1$ and $k$ without having $\sigma(1) = k$ or $\sigma(k) = 1$ are given by $(1 \; 2 \; k \; k-1)$ and $(1 \; k-1 \; k \; 2)$. Furthermore, if $1$ and $k$ are not in the same cycle, then $(1 \; 2)$ and $(k-1 \; k)$, or $(1 \; k-1)$ and $(2 \; k)$ are in the permutation. Therefore, we can decompose $X_{*,*}$ further into the sum of five terms. Let
\begin{itemize}
    \item $Y_1$ equal the sum of terms corresponding to permutations containing $(1 \; 2 \; k \; k-1)$ or $(1 \; k-1 \; k \; 2)$,
    \item $Y_2$ equal the sum of terms corresponding to permutations containing $(1 \; k-1 )$ and $(2 \; k)$,
    \item $Y_3$ equal the sum of terms corresponding to permutations containing $(1 \; 2 )$ and $(k-1 \; k)$,
    \item $Y_4$ equal the sum of terms corresponding to permutations containing $(1 \; 2)$ and not containing $(k-1 \; k)$ or $(k)$,
    \item $Y_5$ equal the sum of terms corresponding to permutations containing $(k-1 \; k)$ and not containing $(1 \; 2)$ or $(1)$.
\end{itemize}
$Y_1$ and $Y_2$ are only non-zero if $\{1,k-1\}$ and $\{2,k\}$ are both chords of $C$, in which case these are the only chords adjacent to $1$ or $k$ and so $Y_4 = Y_5 = 0$. In general, $Y_4$ is only non-zero if there is a chord (other than possibly $\{2,k\}$) incident to $k$, and $Y_5$ is only non-zero if there is a chord (other than possibly $\{1,k-1\}$) incident to $1$. We have $X_{*,*} = Y_1 + Y_2 + Y_3 + Y_4 + Y_5$, and
\begin{align*}
Y_1 &= -2 K_{1,k-1} K_{k-1,k} K_{k,2} K_{2,1} \Delta_{S \backslash 1,2,k-1,k}, \\
Y_2 &= K_{1,k-1} K_{k-1,1} K_{2,k} K_{k,2} \Delta_{S \backslash 1,2,k-1,k} \\&= \big[ \Delta_{1,k-1} - \Delta_{1} \Delta_{k-1}\big]\big[ \Delta_{2,k} - \Delta_{2} \Delta_{k}\big]\Delta_{S \backslash 1,2,k-1,k}, \\
Y_3 &= K_{1,2} K_{2,1} K_{k-1,k} K_{k,k-1} \Delta_{S \backslash 1,2,k-1,k} \\&= \big[ \Delta_{1,2} - \Delta_{1} \Delta_{2}\big]\big[ \Delta_{k-1,k} - \Delta_{k-1} \Delta_{k}\big]\Delta_{S \backslash 1,2,k-1,k}, \\
Y_3 + Y_4 &= -K_{1,2} K_{2,1} \big[\Delta_{S\backslash 1,2} - \Delta_{k} \Delta_{S\backslash 1,2,k} \big] \\&= \big[ \Delta_{1,2} - \Delta_{1} \Delta_{2}\big]\big[\Delta_{S\backslash 1,2} - \Delta_{k} \Delta_{S\backslash 1,2,k} \big], \\
Y_3 + Y_5 &= -K_{k-1,k} K_{k,k-1} \big[\Delta_{S\backslash k-1,k} - \Delta_{1} \Delta_{S\backslash 1,k-1,k} \big] \\&= \big[ \Delta_{k-1,k} - \Delta_{k-1} \Delta_{k}\big]\big[\Delta_{S\backslash k-1,k} - \Delta_{1} \Delta_{S\backslash 1,k-1,k} \big].
\end{align*}
Combining all of these terms gives us
\begin{align*}
    X_{*,*} &= -2 K_{1,k-1} K_{k-1,k} K_{k,2} K_{2,1} \Delta_{S \backslash 1,2,k-1,k} \\
    &\quad + \big[ \Delta_{1,k-1} - \Delta_{1} \Delta_{k-1}\big]\big[ \Delta_{2,k} - \Delta_{2} \Delta_{k}\big]\Delta_{S \backslash 1,2,k-1,k} \\
    &\quad + \big[ \Delta_{1,2} - \Delta_{1} \Delta_{2}\big]\big[\Delta_{S\backslash 1,2} - \Delta_{k} \Delta_{S\backslash 1,2,k} \big] \\
    &\quad + \big[ \Delta_{k-1,k} - \Delta_{k-1} \Delta_{k}\big]\big[\Delta_{S\backslash k-1,k} - \Delta_{1} \Delta_{S\backslash 1,k-1,k} \big] \\
    &\quad -\big[ \Delta_{1,2} - \Delta_{1} \Delta_{2}\big]\big[ \Delta_{k-1,k} - \Delta_{k-1} \Delta_{k}\big]\Delta_{S \backslash 1,2,k-1,k}.
\end{align*}
Combining our formula for $X_{*,*}$ with our formula for $\Delta_S$ leads to the desired result.
% \textcolor{red}{
% \begin{align*}
%     \Delta_S &= \Delta_{1} \Delta_{S\backslash 1} + \Delta_{k}\Delta_{S\backslash k} + \big[ \Delta_{1,k} - 2\Delta_{1} \Delta_{k}\big]\Delta_{S\backslash 1,k} \\
%      &\quad -2 K_{1,k-1} K_{k-1,k} K_{k,2} K_{2,1} \Delta_{S \backslash 1,2,k-1,k} \\
%     &\quad + \big[ \Delta_{1,k-1} - \Delta_{1} \Delta_{k-1}\big]\big[ \Delta_{2,k} - \Delta_{2} \Delta_{k}\big]\Delta_{S \backslash 1,2,k-1,k} \\
%     &\quad + \big[ \Delta_{1,2} - \Delta_{1} \Delta_{2}\big]\big[\Delta_{S\backslash 1,2} - \Delta_{k} \Delta_{S\backslash 1,2,k} \big] \\
%     &\quad + \big[ \Delta_{k-1,k} - \Delta_{k-1} \Delta_{k}\big]\big[\Delta_{S\backslash k-1,k} - \Delta_{1} \Delta_{S\backslash 1,k-1,k} \big] \\
%     &\quad -\big[ \Delta_{1,2} - \Delta_{1} \Delta_{2}\big]\big[ \Delta_{k-1,k} - \Delta_{k-1} \Delta_{k}\big]\Delta_{S \backslash 1,2,k-1,k} + Z.
% \end{align*}}
\end{proof}

\begin{lemma}\label{lm:z_form}
Let $K \in \mathcal{K}$ have charged sparsity graph $G = ([N],E,\epsilon)$, and $C = i_1 \; ... \; i_k \; i_1$ be a positive simple cycle of $G$ whose chords $\gamma(C)$ satisfy Properties (\ref{it:min1})-(\ref{it:min3}) of Lemma \ref{lm:min_basis}, with vertices ordered as in Lemma \ref{lm:span_edge}. Let $Z$ equal the sum of terms in the Leibniz expansion of $\Delta_{S}$, $S = V(C)$, corresponding to permutations where $\sigma(i_1) = i_{k}$ or  $\sigma(i_{k}) = i_1$, but not both (i.e., as in Lemma \ref{lm:minor_exp}). 
%Let $T \subseteq k-1$ be the set of indices $a$ for which there does not exist an index $b$ such that $\{i_a,i_{b-1}\},\{i_{a+1},i_b\} \in \gamma(C)$, i.e., edge $\{i_a,i_{a+1}\}$ is not incident to a pair of crossed chords. 
Let $U \subseteq S^2$ be the set of pairs $(a,b)$, $a<b$, for which
$\{i_a,i_{b-1}\},\{i_{a+1},i_b\} \in \gamma(C)$, i.e., cycle edges $\{i_a,i_{a+1}\}$ and $\{i_{b-1},i_b\}$ have a pair of crossed chords between them. Then
%\begin{align*}
%     &2 \,  (-1)^{k+1}  \, K_{i_k,i_1}  \prod_{a \in T} K_{i_a,i_{a+1}}\prod_{(a,b) \in U} \big[K_{i_a,i_{a+1}} K_{i_{b-1},i_b} - \epsilon_{i_a,i_{a+1}} K_{i_{b-1},i_a} K_{i_{a+1},i_b} \big].
%\end{align*}
$$Z = 2 \,  (-1)^{k+1}  \, K_{i_k,i_1} \prod_{j=1}^{k-1} K_{i_j,i_{j+1}} \prod_{(a,b) \in U} \left[1-\frac{\epsilon_{i_a,i_{a+1}} K_{i_{b-1},i_a} K_{i_{a+1},i_b}}{K_{i_a,i_{a+1}} K_{i_{b-1},i_b}}\right].
$$
\end{lemma}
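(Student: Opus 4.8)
The plan is to compute $Z$ directly from its definition as a sum over permutations, organizing the terms by which simple positive cycles of $G[S]$ can contain both $i_1$ and $i_k$ in a single cycle of the permutation while having neither $\sigma(i_1)=i_k$ nor $\sigma(i_k)=i_1$. By Lemma \ref{lm:span_edge}(\ref{it:span2}), the edge $\{i_1,i_k\}$ belongs only to positive simple cycles that span all of $V(C)$ and use only edges of $E(C)$ together with pairs of crossed chords; dually, a permutation contributing to $Z$ contains a cyclic permutation whose support is a simple cycle through $i_1$ and $i_k$ but which \emph{omits} the edge $\{i_1,i_k\}$ (since $\sigma(i_1)\ne i_k$ and $\sigma(i_k)\ne i_1$). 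I would argue that such a cycle must in fact be exactly $V(C)$: removing $\{i_1,i_k\}$ from $C$ leaves a Hamiltonian path on $V(C)$, and any positive simple cycle on a subset $S'\subsetneq S$ through both $i_1$ and $i_k$ not using $\{i_1,i_k\}$ would have to use chords; Properties (\ref{it:min1})--(\ref{it:min3}) and the structure from Lemma \ref{lm:span_edge}(\ref{it:span1}) (all chords $\{i_p,i_q\}$ with $p\le \ell<q$) force any such cycle to span $V(C)$ and to consist of the path $C\setminus\{i_1,i_k\}$ with some crossed chord pairs shortcutting it.

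Concretely, the relevant cyclic permutations are obtained from the base cycle $i_1\,i_2\,\cdots\,i_k\,i_1$ (with the edge $\{i_k,i_1\}$ deleted, i.e.\ the path $i_1\cdots i_k$) by, for each $(a,b)\in U$, choosing whether to traverse the segment between positions $a$ and $b$ straight (using cycle edges $\{i_a,i_{a+1}\}$ and $\{i_{b-1},i_b\}$) or ``crossed'' (using the chords $\{i_a,i_{b-1}\}$ and $\{i_{a+1},i_b\}$ and reversing the internal segment). Each of these $2^{|U|}$ choices, together with its orientation-reversal, yields a pair of permutations contributing equal Leibniz terms (sign and weight are preserved under inverting a cycle), which accounts for the overall factor of $2$. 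For a fixed subset $T\subseteq U$ of ``crossed'' segments, the signed weight of the corresponding permutation term is $(-1)^{k+1}$ (the sign of a single $k$-cycle, which does not change since crossing a segment pair preserves parity — it replaces two edges by two edges and reverses an internal path) times the product $\prod_{j} K_{i_j,i_{j+1}}$ over the straight edges, times $\prod_{(a,b)\in T}(\text{chord weights})$. Dividing and multiplying by the straight-edge weights that got replaced, the term for subset $T$ is
$$(-1)^{k+1} K_{i_k,i_1}\prod_{j=1}^{k-1}K_{i_j,i_{j+1}} \;\cdot\; \prod_{(a,b)\in T}\Bigl(-\frac{K_{i_{b-1},i_a}K_{i_{a+1},i_b}}{K_{i_a,i_{a+1}}K_{i_{b-1},i_b}}\Bigr),$$
and one must check the sign of the chord-replacement factor works out to $-\epsilon_{i_a,i_{a+1}}$: reversing the internal segment between positions $a+1$ and $b-1$ contributes a sign, and combined with $\epsilon(C(a,b))=-1$ from Lemma \ref{lm:min_basis}(\ref{it:min1}) this yields the claimed $-\epsilon_{i_a,i_{a+1}}$ (here I would relate the parity of the reversal to $\epsilon$ of the sub-cycle bounded by the crossed chords). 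Summing over all $T\subseteq U$ factorizes as $\prod_{(a,b)\in U}\bigl[1 - \epsilon_{i_a,i_{a+1}} K_{i_{b-1},i_a}K_{i_{a+1},i_b}/(K_{i_a,i_{a+1}}K_{i_{b-1},i_b})\bigr]$, giving the formula.

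The main obstacle is the combinatorial classification step: proving that the \emph{only} permutation cycles contributing to $Z$ are the $2^{|U|+1}$ described above — i.e., that no permutation can route through $i_1$ and $i_k$ using chords in a way not captured by independently toggling the crossed-chord pairs in $U$. This requires carefully invoking Lemma \ref{lm:span_edge}: any positive simple cycle through $\{i_1,i_k\}$ spans $V(C)$ and uses only $E(C)$-edges and crossed-chord pairs, so after removing $\{i_1,i_k\}$ we are choosing, for a Hamiltonian path structure, which crossed pairs to activate; because the crossed pairs in $U$ each form a four-cycle with two cycle edges (Lemma \ref{lm:min_basis}(\ref{it:min2})) and the non-crossing chords are excluded by Property (\ref{it:min3})/the $\ell$-structure, these choices are genuinely independent, giving the product. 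The secondary subtlety is bookkeeping the signs — the permutation sign $(-1)^{k+1}$, the factor $2$ from inverse cycles, and the per-segment sign $-\epsilon_{i_a,i_{a+1}}$ — all of which I would pin down by first treating the base case $|U|=1$ (a single crossed-chord four-cycle, essentially the $k=4$ computation of $Z$ in equation \eqref{eqn:z4}) and then arguing multiplicativity.
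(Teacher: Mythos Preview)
Your overall strategy --- parameterize the contributing permutations by Hamiltonian cycles through $\{i_1,i_k\}$ indexed by which crossed-chord pairs in $U$ are toggled, then factorize the sum as a product over $U$ with the per-crossing sign coming from Property~(\ref{it:min1}) of Lemma~\ref{lm:min_basis} --- is exactly the paper's approach. The paper presents the factorization as a recursion that peels off one crossing at a time rather than as a direct sum over subsets $T\subseteq U$, but the two are equivalent.

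There is, however, a genuine misstep in your first paragraph: you have inverted the definition of $Z$. You write that a contributing permutation cycle ``omits the edge $\{i_1,i_k\}$ (since $\sigma(i_1)\ne i_k$ and $\sigma(i_k)\ne i_1$),'' but $Z$ is by definition the sum over permutations where \emph{exactly one} of $\sigma(i_1)=i_k$, $\sigma(i_k)=i_1$ holds, so the permutation cycle \emph{does} use the edge $\{i_1,i_k\}$ --- and indeed $K_{i_k,i_1}$ appears as a factor in your own displayed formula two paragraphs later. This matters for the classification step you flag as the main obstacle: your sketched argument that the cycle must span $V(C)$ is phrased for cycles avoiding $\{i_1,i_k\}$ and is not obviously salvageable as stated, whereas the correct and immediate justification is Lemma~\ref{lm:span_edge}(\ref{it:span2}) applied to positive simple cycles \emph{containing} $\{i_1,i_k\}$. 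Once that is corrected, the enumeration of the $2\cdot 2^{|U|}$ oriented Hamiltonian cycles follows directly, and any non-crossing chords of $C$ may be ignored since Lemma~\ref{lm:span_edge}(\ref{it:span2}) excludes them from such cycles. A smaller imprecision: in your sign computation you cite $\epsilon(C(a,b))=-1$, but $\{i_a,i_b\}$ is not a chord; the relevant cycle is $C(a,b-1)$ (formed by the chord $\{i_a,i_{b-1}\}$), and from $\epsilon(C(a,b-1))=-1$ the paper reads off $\epsilon$ of the reversed internal path and hence the factor $-\epsilon_{i_a,i_{a+1}}$.
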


\begin{proof}
Without loss of generality, suppose that $G$ is labelled so that $C = 1 \; 2 \; ... \; k \; 1$ and edge $\{1,k\}$ satisfies Property (\ref{it:span2}) of Lemma \ref{lm:span_edge}, and so every positive simple cycle of $G[S]$ containing $\{1,k\}$ spans $S$ and contains only edges in $E(C)$ and pairs of crossing chords. Therefore, we may assume without loss of generality that $G[S]$ contains $p$ pairs of crossing chords, and no other chords. If $p = 0$, then $C$ is an induced cycle and the result follows immediately.

 There are $2^p$ Hamiltonian paths in $G[S]$ joining $1$ and $k$, corresponding to the $p$ binary choices of whether to use each pair of crossing chords or not. Let us denote these paths from $1$ to $k$ by $P^{1,k}_\theta$, where $\theta \in \{0,1\}^{p}$, and $\theta_i$ equals one if the $i^{th}$ crossing is used in the path (ordered based on increasing distance to $\{1,k\}$), and zero otherwise. In particular, $P^{1,k}_0$ corresponds to the path $= 1 \; 2 \; ... \; k$. We only consider paths with the orientation $1 \rightarrow k$, as all cycles under consideration are positive, and the opposite orientation has the same sum. 
 Denoting the product of the terms of $K$ corresponding to the path $P^{1,k}_{\theta} = \ell_1 \; \ell_2 \; ... \; \ell_{k}$, where $\ell_1 = 1$ and $\ell_k = k$, by
$$K\big(P^{1, k}_{\theta}\big) = \prod_{j=1}^{k-1} K_{\ell_j,\ell_{j+1}},$$
we have
$$ Z = 2 \, (-1)^{k+1} \, K_{k,1} K\big(P^{1,k}_0\big) \sum_{\theta \in \mathbb{Z}_2^{p}} \frac{K\big(P^{1, k}_{\theta}\big)}{K\big(P^{1,k}_0\big)}.$$
%\textcolor{red}{where $$K\big(P^{1,k}_0\big)=\prod_{j=1}^{k-1} K_{j,j+1}.$$}
Suppose that the first possible crossing occurs at cycle edges $\{a,a+1\}$ and $\{b-1,b\}$, $a<b$, i.e., $\{a,b-1\}$ and $\{a+1,b\}$ are crossing chords. Then
$$Z = 2 \, (-1)^{k+1} \, K_{k,1} K\big(P^{1,k}_0\big) \sum_{\theta \in \mathbb{Z}_2^{p}} \frac{K\big(P^{a, b}_{\theta}\big)}{K\big(P^{a,b}_0\big)}.$$
We have
$$\sum_{\theta \in \mathbb{Z}_2^{p}} K\big(P^{a, b}_{\theta}\big) = \sum_{\theta_1 = 0}K\big(P^{a,b}_{\theta}\big) + \sum_{\theta_1 = 1}K\big(P^{a, b}_{\theta}\big),$$
and
\begin{align*}
    \sum_{\theta_1 = 0}K\big(P^{a,b}_{\theta}\big) &= K_{a,a+1} K_{b-1,b} \sum_{\theta' \in \mathbb{Z}_2^{p-1}} K\big(P^{a+1,b-1}_{\theta'}\big),
\end{align*}
\begin{align*}
    \sum_{\theta_1 = 1}K\big(P^{a, b}_{\theta}\big) &= K_{a,b-1} K_{a+1,b} \sum_{\theta' \in \mathbb{Z}_2^{p-1}} K\big(P^{b-1, a+1}_{\theta'}\big) \\
    &= K_{a,b-1} K_{a+1,b} \sum_{\theta' \in \mathbb{Z}_2^{p-1}} K\big(P^{a+1, b-1}_{\theta'}\big) \epsilon\big(P^{a+1, b-1}_{\theta'}\big).
\end{align*}
% \begin{align*}
%   \sum_{\theta \in \mathbb{Z}_2^{p}} K\big(P^{a, b}_{\theta}\big)&= \sum_{\theta_1 = 0}K\big(P^{a,b}_{\theta}\big) + \sum_{\theta_1 = 1}K\big(P^{a, b}_{\theta}\big) \\
%     &=K_{a,a+1} K_{b-1,b} \sum_{\theta' \in \mathbb{Z}_2^{p-1}} K\big(P^{a+1,b-1}_{\theta'}\big)\\
%     &\quad + K_{a,b-1} K_{a+1,b} \sum_{\theta' \in \mathbb{Z}_2^{p-1}} K\big(P^{b-1, a+1}_{\theta'}\big) \\
%     &=K_{a,a+1} K_{b-1,b} \sum_{\theta' \in \mathbb{Z}_2^{p-1}} K\big(P^{a+1, b-1}_{\theta'}\big) \\
%     &\quad + K_{a,b-1} K_{a+1,b} \sum_{\theta' \in \mathbb{Z}_2^{p-1}} K\big(P^{a+1, b-1}_{\theta'}\big) \epsilon\big(P^{a+1, b-1}_{\theta'}\big).
% \end{align*} 
By Property (\ref{it:min1}) of Lemma \ref{lm:min_basis}, $\epsilon\big( C(a,b-1)\big) = -1$, and so
$$\epsilon\big(P^{a+1, b-1}_{\theta'}\big) = -\epsilon_{b-1,a} \epsilon_{a,a+1} \quad \text{and}  \quad K_{a,b-1} \epsilon\big(P^{a+1, b-1}_{\theta'}\big) = - \epsilon_{a,a+1} K_{b-1,a}.$$
This implies that
\begin{align*}
    \sum_{\theta \in \mathbb{Z}_2^{p}} K\big(P^{a, b}_{\theta}\big)
    % &=\big(K_{a,a+1} K_{b-1,b} - \epsilon_{b-1,a} \epsilon_{a,a+1} K_{a,b-1} K_{a+1,b}\big) \sum_{\theta' \in \mathbb{Z}_2^{p-1}} K\big(P^{a+1, b-1}_{\theta'}\big) \\
    &=\big(K_{a,a+1} K_{b-1,b} -  \epsilon_{a,a+1} K_{b-1,a} K_{a+1,b}\big) \sum_{\theta' \in \mathbb{Z}_2^{p-1}} K\big(P^{a+1, b-1}_{\theta'}\big),
\end{align*}
and that 
$$ \sum_{\theta \in \mathbb{Z}_2^{p}} \frac{K\big(P^{a, b}_{\theta}\big)}{K\big(P^{a,b}_0\big)} = \left(1-\frac{\epsilon_{a,a+1} K_{b-1,a} K_{a+1,b}}{K_{a,a+1} K_{b-1,b}}\right)\sum_{\theta' \in \mathbb{Z}_2^{p-1}} \frac{K\big(P^{a+1, b-1}_{\theta'}\big)}{K\big(P^{a+1,b-1}_0\big)}.$$
Repeating the above procedure for the remaining $p-1$ crossings completes the proof.
\end{proof}

Equipped with Lemmas \ref{lm:min_basis}, \ref{lm:span_edge}, \ref{lm:minor_exp}, and \ref{lm:z_form}, we can now make a key observation. Suppose that we have a simple cycle basis $\{x_1,...,x_{\nu-1}\}$ for $\mathcal{C}^+(G)$ whose corresponding cycles all satisfy Properties (\ref{it:min1})-(\ref{it:min3}) of Lemma \ref{lm:min_basis}. Of course, a minimal simple cycle basis satisfies this, but in Section \ref{sec:eff_alg} we will consider alternate bases that also satisfy these conditions and may be easier to compute in practice. For cycles $C$ of length at most four, we have already detailed how to compute $s(C)$, and this computation requires only principal minors corresponding to subsets of $V(C)$. When $C$ is of length greater than four, by Lemmas \ref{lm:minor_exp} and \ref{lm:z_form}, we can also compute $s(C)$, using only principal minors corresponding to subsets of $V(C)$, as the quantity $\text{sgn}(K_{i_1,i_{k-1}} K_{i_{k-1},i_k} K_{i_k,i_2} K_{i_2,i_1})$ in Lemma \ref{lm:minor_exp} corresponds to a positive four-cycle, and in Lemma \ref{lm:z_form} the quantity
$$\text{sgn}\bigg(1 - \frac{\epsilon_{i_a,i_{a+1}} K_{i_{b-1},i_a} K_{i_{a+1},i_b} }{K_{i_a,i_{a+1}} K_{i_{b-1},i_b} }\bigg) $$
equals $+1$ if $|K_{i_a,i_{a+1}} K_{i_{b-1},i_b} | > |K_{i_{b-1},i_a} K_{i_{a+1},i_b} |$ and equals
$$- \epsilon_{i_a,i_{a+1}} \epsilon_{i_{b-1},i_b} \, \text{sgn}\big( K_{i_a,i_{a+1}} K_{i_{a+1},i_b} K_{i_b,i_{b-1}} K_{i_{b-1},i_a} \big) $$
if $|K_{i_a,i_{a+1}} K_{i_{b-1},i_b}| < |K_{i_{b-1},i_a} K_{i_{a+1},i_b}|$. By Condition \ref{eqn:gen_prop}, we have $|K_{i_a,i_{a+1}} K_{i_{b-1},i_b}| \ne |K_{i_{b-1},i_a} K_{i_{a+1},i_b}|$. Therefore, given a simple cycle basis $\{x_1,...,x_{\nu-1}\}$ whose corresponding cycles satisfy Properties (\ref{it:min1})-(\ref{it:min3}) of Lemma \ref{lm:min_basis}, we can compute $s(C)$ for every such cycle in the basis using only principal minors of size at most the length of the longest cycle in the basis. Given this fact, we are now prepared to answer \eqref{q2} (in Theorem \ref{thm:ell}) and provide an alternate proof for \eqref{q1} (in Proposition \ref{thm:set}) through the following proposition and theorem.

\begin{proposition}\label{thm:set}
Let $K \in \mathcal{K}$, with charged sparsity graph $G = ([N],E,\epsilon)$. The set of $K' \in \mathcal{K}$ that satisfy $\Delta_S(K) = \Delta_S(K')$ for all $S\subseteq [N]$ is exactly the set generated by $K$ and the operations \vspace{1.5 mm}
\begin{adjustwidth}{5 em}{0pt}
\begin{enumerate}
    \item[$\mathcal{D}_N$-similarity:]$K \rightarrow D K D$, where $D \in \mathbb{R}^{N\times N}$ is an arbitrary involutory diagonal matrix, i.e., $D$ has entries $\pm 1$ on the diagonal, and
    \item[block transpose:]$K\rightarrow K'$, where $K'_{i,j} = K_{j,i}$ for all $i,j \in V(H)$  for some block $H$ of $G$, and $K'_{i,j} = K_{i,j}$ otherwise.
\end{enumerate}
\end{adjustwidth}
\end{proposition}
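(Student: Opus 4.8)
The plan is to prove the two directions separately. The ``easy'' direction is to verify that both operations preserve all principal minors: $\mathcal{D}_N$-similarity is the standard observation that conjugation by a diagonal matrix multiplies $K_{i,j}K_{j,i}$-type products trivially and leaves $\det(K_S)$ unchanged (since $\det(DK_SD) = \det(D)^2\det(K_S)$ with $\det(D)^2 = 1$); for block transpose, I would use Equations \eqref{eqn:disconn} and \eqref{eqn:biconn}, which reduce every principal minor $\Delta_S$ to a polynomial in principal minors of subsets of \emph{single} blocks, and then observe that within a single block $H$, transposing $K_H$ leaves $\det((K_H)_{S'})$ unchanged because $\det(M) = \det(M^\top)$ for any square matrix. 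One must be slightly careful that ``block transpose'' is well-defined — blocks share at most a cut vertex, so the off-diagonal entries being altered are disjoint across blocks — and that applying it on one block does not disturb the others (immediate, since only entries indexed by $V(H)\times V(H)$ change, and the diagonal is untouched).

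The substantive direction is to show these operations \emph{generate everything}. Suppose $K' \in \mathcal{K}$ has $\Delta_S(K') = \Delta_S(K)$ for all $S$. From principal minors of order one and two we already get $K'_{i,i} = K_{i,i}$, $|K'_{i,j}| = |K_{i,j}|$, and $\epsilon'_{i,j} = \epsilon_{i,j}$ for all $i,j$; in particular $K$ and $K'$ share the same charged sparsity graph $G$ and the same block structure. So it suffices to handle each block $H$ separately (using \eqref{eqn:disconn}, \eqref{eqn:biconn} to see that agreement of all principal minors is equivalent to agreement of principal minors of subsets of each block), and trivial blocks are automatic. Fix a block $H$, assumed two-connected, and fix a minimal simple cycle basis $\{x_1,\dots,x_{\nu-1}\}$ of $\mathcal{C}^+(H)$ whose cycles satisfy Properties \eqref{it:min1}--\eqref{it:min3} of Lemma \ref{lm:min_basis}. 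The key point, established in the discussion preceding the proposition (via Lemmas \ref{lm:minor_exp} and \ref{lm:z_form} together with Condition \ref{eqn:gen_prop}), is that from the common principal minors one can compute $s(C_j)$ for every basis cycle $C_j$, and hence $s(C)$ for \emph{every} positive simple cycle $C$ of $H$. Since $K$ and $K'$ have identical principal minors, they produce identical values $s(C) = s'(C)$ for all positive simple cycles $C$ of $H$.

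It remains to convert ``$K$ and $K'$ agree in magnitude, in edge-charge, and in $s(C)$ for every positive simple cycle of the block $H$'' into ``$K'_H = DK_HD$ or $K'_H = (DK_HD)^\top$ for some involutory diagonal $D$ supported on $V(H)$.'' I would argue as follows. Pick a spanning tree $T$ of $H$ rooted at some vertex, and define the diagonal signs of $D$ greedily along $T$ so that, after conjugating $K$ by $D$, every tree edge $\{i,j\}\in E(T)$ has $\mathrm{sgn}((DKD)_{i,j}) = \mathrm{sgn}(K'_{i,j})$ — possible since conjugation flips $\mathrm{sgn}(K_{i,j})$ and $\mathrm{sgn}(K_{j,i})$ simultaneously for edges incident to a vertex whose sign we flip, and the tree has no cycles so the greedy assignment never conflicts. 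Replacing $K$ by $DKD$, we may now assume $K$ and $K'$ agree in sign on all of $T$. For a non-tree edge $e = \{i,j\}$, the fundamental cycle $C_e$ of $e$ with respect to $T$ is simple; if it is positive, then $s(C_e)=s'(C_e)$ together with sign-agreement on the tree edges of $C_e$ forces $\mathrm{sgn}(K_{i,j}K_{j,i}) $ and the ``directed'' sign data along $C_e$ to match between $K$ and $K'$ — more precisely, it forces $\mathrm{sgn}(K_{i,j}) = \mathrm{sgn}(K'_{i,j})$ and $\mathrm{sgn}(K_{j,i}) = \mathrm{sgn}(K'_{j,i})$, OR the reversed pairing, and the reversed pairing for \emph{all} edges simultaneously is exactly the transpose. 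The hard part — and the main obstacle — is handling negative fundamental cycles and, more generally, propagating a globally consistent choice: one must show that the per-edge ``matched'' vs.\ ``reversed'' dichotomy cannot be made independently edge-by-edge but is forced to be globally constant (all matched, giving $K'_H = K_H$ after conjugation, or all reversed, giving $K'_H = K_H^\top$), and that negative cycles impose no obstruction because, by the no-cancellation Condition \ref{eqn:gen_prop} and the earlier observation that negative cycles contribute zero to principal minors, the only constraints linking different edges come precisely from positive simple cycles, whose span $\mathcal{C}^+(H)$ is exactly what the basis controls. I would formalize the global consistency by a connectivity/parity argument on $H$: two-connectedness guarantees enough positive simple cycles (Proposition \ref{prop:simple_basis}) to link any two edges, so the dichotomy is constant on each block, yielding the claimed description. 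Finally, the involutory $D$ produced is supported on $V(H)$ and acts trivially elsewhere, so composing the per-block operations gives the full statement.
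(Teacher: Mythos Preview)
Your overall strategy matches the paper's: verify the two operations preserve minors, reduce to a single two-connected block, use $\mathcal{D}_N$-similarity to match signs along a spanning tree, and invoke Lemmas \ref{lm:minor_exp}--\ref{lm:z_form} to get $s(C)=s'(C)$ for all positive simple cycles. The gap is in how you dispose of the transpose degree of freedom.

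After conjugating so that $K$ and $K'$ agree on $T$, set $\sigma(e)=\text{sgn}(K'_{i,j})/\text{sgn}(K_{i,j})$ for $e=\{i,j\}$, $i<j$. You have $\sigma=1$ on $T$ and $\prod_{e\in C}\sigma(e)=1$ for every positive simple cycle $C$. Since $\mathcal{C}^+(H)$ has codimension at most one in $\mathcal{C}(H)$ (Proposition \ref{prop:dim}), either $\sigma$ is trivial on all of $\mathcal{C}(H)$ (so $K'=K$), or $\sigma$ agrees with $\epsilon$ on $\mathcal{C}(H)$, which forces $\sigma(e)=\epsilon(C_e)$ on each non-tree edge. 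This second case is \emph{not} ``all edges reversed'' and does \emph{not} yield $K'=K^\top$: on tree edges $\sigma=1$, not $\epsilon_e$. What it actually yields is $K'=D'K^\top D'$ for a \emph{different} involutory diagonal $D'$ (obtained by propagating $\epsilon$ along $T$), which you would still need to check. So your ``globally constant matched/reversed dichotomy'' is misstated, and the appeal to ``two-connectedness links any two edges via positive simple cycles'' does not by itself pin down $\sigma$ on a non-tree edge whose fundamental cycle is negative.

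The paper sidesteps this by normalizing more carefully \emph{before} the tree-matching: it first fixes a negative simple cycle $C'$ and a negative edge $e\in E(C')$, extends $E(C')\setminus e$ to the spanning tree $T$, matches on $T$ via $\mathcal{D}_N$-similarity, and then, if needed, applies a single block transpose to match on $e$ (followed by re-matching $T$, which succeeds because the $T$-path between the endpoints of $e$ has an even number of negative edges). Now $\chi_{C'}$ together with a positive simple cycle basis spans all of $\mathcal{C}(H)$, so for \emph{every} fundamental cycle $\hat C$ (positive or negative) one has $s_K(\hat C)=s_{K'}(\hat C)$, and the remaining signs are forced. This eliminates the residual $D'$-ambiguity that your argument leaves open.
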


\begin{proof}
We first verify that the $\mathcal{D}_N$-similarity and block transpose operations both preserve principal minors. The determinant is multiplicative, and so principal minors are immediately preserved under $\mathcal{D}_N$-similarity, as the principal submatrix of $DKD$ corresponding to $S$ is equal to the product of the principal submatrices corresponding to $S$ of the three matrices $D$, $K$, $D$, and so $\Delta_S(DKD) = \Delta_S(D) \Delta_S(K) \Delta_S(D) = \Delta_S(K)$. For the block transpose operation, we note that the transpose leaves the determinant unchanged. By Equation (\ref{eqn:biconn}), the principal minors of a matrix are uniquely determined by the principal minors of the matrices corresponding to the blocks of $G$. As the transpose of any block also leaves the principal minors corresponding to subsets of other blocks unchanged, principal minors are preserved under block transpose.

What remains is to show that if $K'$ satisfies $\Delta_S(K) = \Delta_S(K')$ for all $S\subseteq [N]$, then $K'$ is generated by $K$ and the above two operations. Without loss of generality, we may suppose that the shared charged sparsity graph $G=([N],E,\epsilon)$ of $K$ and $K'$ is two-connected, as the general case follows from this one. We will transform $K'$ to $K$ by first making them agree for entries corresponding to a spanning tree and a negative edge of $G$, and then observing that by Lemmas \ref{lm:minor_exp} and \ref{lm:z_form}, this property implies that all entries agree.

Let $C'$ be an arbitrary negative simple cycle of $G$, $e \in E(C')$ be a negative edge in $C'$, and $T$ be a spanning tree of $G$ containing the edges $E(C')\, \backslash \,e$. By applying $\mathcal{D}_N$-similarity and block transpose to $K'$, we can produce a matrix that agrees with $K$ for all entries corresponding to edges in $E(T) \cup e$. We perform this procedure in three steps, first by making $K'$ agree with $K$ for edges in $E(T)$, then for edge $e$, and then finally fixing any edges in $E(T)$ for which the matrices no longer agree. 

First, we make $K'$ and $K$ agree for edges in $E(T)$. Suppose that $K'_{p,q} = - K_{p,q}$ for some $\{p,q\} \in E(T)$. Let $U \subseteq [N]$ be the set of vertices connected to $p$ in the forest $T \, \backslash \, \{p,q\}$ (the removal of edge $\{p,q\}$ from $T$), and $\hat D$ be the diagonal matrix with $\hat D_{i,i} = +1$ if $i \in U$ and $\hat D_{i,i} = -1$ if $i \not \in U$. The matrix $\hat D K' \hat D$ satisfies
$$\big[\hat D K' \hat D\big]_{p,q} = \hat D_{p,p}  K'_{p,q} \hat D_{q,q} = - K'_{p,q} = K_{p,q},$$
and $\big[\hat D K' \hat D\big]_{i,j} = K'_{i,j}$ for any $i,j$ either both in $U$ or neither in $U$ (and therefore for all edges $\{i,j\} \in E(T)\, \backslash \, \{p,q\}$). Repeating this procedure for every edge $\{p,q\} \in E(T)$ for which $K'_{p,q} = - K_{p,q}$ results in a matrix that agrees with $K$ for all entries corresponding to edges in $E(T)$.  

Next, we make our matrix and $K$ agree for the edge $e$. If our matrix already agrees for edge $e$, then we are done with this part of the proof, and we denote the resulting matrix by $\hat K$. If it does not agree, then, by taking the transpose of this matrix, we have a new matrix that now agrees with $K$ for all edges $E(T) \cup e$, except for negative edges in $E(T)$. By repeating the $\mathcal{D}_N$-similarity operation again on negative edges of $E(T)$, we again obtain a matrix that agrees with $K$ on the edges of $E(T)$, but now also agrees on the edge $e$, as there is an even number of negative edges in the path between the vertices of $e$ in the tree $T$. We now have a matrix that agrees with $K$ for all entries corresponding to edges in $E(T) \cup e$, and we denote this matrix by $\hat K$.

Finally, we aim to show that agreement on the edges $E(T) \cup e$ already implies that $\hat K = K$. Let $\{i,j\}$ be an arbitrary edge not in $E(T) \cup e$, and $\hat C$ be the simple cycle containing edge $\{i,j\}$ and the unique $i-j$ path in the tree $T$. By Lemmas \ref{lm:minor_exp} and \ref{lm:z_form}, the value of $s_K(C)$ can be computed for every cycle corresponding to an incidence vector in a minimal simple cycle basis of $\mathcal{C}^+(G)$ using only principal minors. Then $s_K(\hat C)$ can be computed using principal minors and $s_K(C')$, as the incidence vector for $C'$ combined with a minimal positive simple cycle basis forms a basis for $\mathcal{C}(G)$. By assumption, $\Delta_S(K) = \Delta_S(\hat K)$ for all $S\subseteq [N]$, and, by construction, $s_K(C') = s_{\hat K}(C')$. Therefore, $s_K(\hat C) = s_{\hat K}(\hat C)$ for all $\{i,j\}$ not in $E(T) \cup e$, and so $\hat K = K$. 
\end{proof}

\begin{theorem}\label{thm:ell}
Let $K \in \mathcal{K}$, with charged sparsity graph $G = ([N],E,\epsilon)$. Let $\ell_+ \ge 3$ be the simple cycle sparsity of $\mathcal{C}^+(G)$. Then any matrix $K' \in \mathcal{K}$ with $\Delta_S(K) = \Delta_S(K')$ for all $|S| \le \ell_+$ also satisfies $\Delta_S(K) = \Delta_S(K')$ for all $S\subseteq [N]$, and there exists a matrix $\hat K \in \mathcal{K}$ with $\Delta_S(\hat K) = \Delta_S( K)$ for all $|S| < \ell_+$ but $\Delta_S(\hat K) \ne \Delta_S(K)$ for some $|S|=\ell_+$.
\end{theorem}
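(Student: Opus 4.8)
The plan is to prove the two assertions separately, deriving the tightness statement from the identifiability statement together with Proposition~\ref{thm:set}. For identifiability I will argue that principal minors of order at most $\ell_+$ already determine every quantity on which principal minors depend --- the charged sparsity graph, the diagonal entries, the edge magnitudes, the edge charges, and the sign $s(C)$ of each positive cycle $C$ --- the new input being that a minimal simple cycle basis of $\mathcal C^+(G)$ consists of cycles of length $\le\ell_+$ satisfying Properties (\ref{it:min1})--(\ref{it:min3}) of Lemma~\ref{lm:min_basis}, so that Lemmas~\ref{lm:minor_exp} and~\ref{lm:z_form} apply. For tightness I will perturb $K$ by flipping signs of its entries along a positive simple cycle $C^*$ of length exactly $\ell_+$ whose incidence vector is not spanned by those of shorter positive simple cycles, and then, rather than computing a specific order-$\ell_+$ minor, derive a contradiction with Proposition~\ref{thm:set} from the hypothesis that every order-$\ell_+$ minor is preserved. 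The Leibniz-expansion bookkeeping is routine; the one delicate point is this final contradiction, which lets us avoid a cancellation analysis of the type behind Condition~\ref{eqn:gen_prop}.

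\emph{Identifiability.} Suppose $\Delta_S(K)=\Delta_S(K')$ for all $|S|\le\ell_+$. From the case $|S|\le 2$, $K$ and $K'$ have the same charged sparsity graph $G$, the same diagonal entries $K_{i,i}=\Delta_i$, the same magnitudes $|K_{i,j}|$ and the same charges $\epsilon_{i,j}$. By Equations~(\ref{eqn:disconn}) and~(\ref{eqn:biconn}), which express every $\Delta_S$ through principal minors of subsets of the blocks of $G$ (the block structure being read off from $G$), it suffices to treat $G$ two-connected. Fix a minimal simple cycle basis $\{x_1,\dots,x_{\nu-1}\}$ of $\mathcal C^+(G)$; by the matroid-greedy remark preceding Lemma~\ref{lm:min_basis} every basis cycle $C_i$ has length $\le\ell_+$, and by Lemma~\ref{lm:min_basis} each satisfies Properties (\ref{it:min1})--(\ref{it:min3}). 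By the discussion preceding Proposition~\ref{thm:set} --- i.e.\ Lemmas~\ref{lm:minor_exp} and~\ref{lm:z_form} together with Condition~\ref{eqn:gen_prop} --- each $s(C_i)$ is a fixed function of principal minors of order $\le\ell_+$, so $s_K(C_i)=s_{K'}(C_i)$; and since $s(C_1+C_2)=s(C_1)s(C_2)$ (edges common to $C_1$ and $C_2$ contribute a squared sign), expanding $\chi_C$ in the basis gives $s_K(C)=s_{K'}(C)$ for every positive cycle $C$ of $G$. It remains to observe that $G$, the diagonal entries, the $|K_{i,j}|$, the $\epsilon_{i,j}$, and $\{s(C):C\text{ positive}\}$ determine every $\Delta_S$: group the Leibniz expansion of $\Delta_S$ by the partition of $S$ induced by the underlying permutation into fixed points, transpositions (necessarily along edges of $G[S]$), and connected graph cycles of length $\ge3$; a group whose partition uses a negative graph cycle contributes $0$ for both $K$ and $K'$, while a surviving group contributes a sign depending only on the partition and on $\epsilon$, times $\prod K_{i,i}$ over fixed points, $\prod(-K_{i,j}K_{j,i})=\prod(\Delta_{i,j}-\Delta_i\Delta_j)$ over transpositions, and for each positive graph cycle $C$ a factor $(1+\epsilon(C))$ times the product of $K$-entries around one orientation of $C$ --- a quantity of magnitude $\prod_{\{i,j\}\in E(C)}|K_{i,j}|$ and sign equal to $s(C)$ times a product of charges fixed by $\epsilon$ and the labeling. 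Every factor is shared by $K$ and $K'$, so $\Delta_S(K)=\Delta_S(K')$, first for $S$ inside a block and then, via~(\ref{eqn:disconn})--(\ref{eqn:biconn}), for all $S$.

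\emph{Tightness, construction.} Since $\ell_+\ge3$, some nontrivial block $H^*$ of $G$ has simple cycle sparsity $\ell_+$, so the incidence vectors of positive simple cycles of $H^*$ of length $<\ell_+$ span a proper subspace of $\mathcal C^+(H^*)$; taking a vector of $\mathcal C^+(H^*)$ outside it and using the direct-sum decomposition $\mathcal C(G)=\bigoplus_H\mathcal C(H)$ over blocks produces a positive simple cycle $C^*$ of $G$ of length exactly $\ell_+$ with $\chi_{C^*}\notin V$, where $V:=\text{span}\{\chi_C:C\text{ a positive simple cycle of }G,\ |C|<\ell_+\}$. Since $\chi_{C^*}\notin V$, choose $\eta\in\textsf{GF}(2)^m$ --- equivalently signs $\eta_{i,j}=\eta_{j,i}\in\{\pm1\}$ on the edges --- with $\langle\eta,v\rangle=0$ for all $v\in V$ and $\langle\eta,\chi_{C^*}\rangle=1$, and set $\hat K_{i,j}:=\eta_{i,j}K_{i,j}$ for $i\ne j$ ($\eta_{i,j}:=1$ off the support of $K$) and $\hat K_{i,i}:=K_{i,i}$. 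Then $|\hat K_{i,j}|=|K_{i,j}|$, so $\hat K\in\mathcal K$ (membership in $\mathcal K$ depends only on entrywise magnitudes), and $\hat K$ has the same charged sparsity graph $G$ (the $\epsilon_{i,j}$ are unchanged as $\eta_{i,j}^2=1$), while $s_{\hat K}(C)=\eta(C)\,s_K(C)$ with $\eta(C):=\prod_{\{i,j\}\in E(C)}\eta_{i,j}=(-1)^{\langle\eta,\chi_C\rangle}$.

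\emph{Tightness, conclusion.} For $|S|<\ell_+$, every positive graph cycle appearing in the grouped Leibniz expansion of $\Delta_S$ is a positive simple cycle of $G$ of length $\le|S|<\ell_+$, hence lies in $V$ and has $\eta(C)=+1$; as the negative-cycle groups vanish for both matrices, no surviving factor changes upon replacing $K$ by $\hat K$, so $\Delta_S(\hat K)=\Delta_S(K)$. Suppose now, for contradiction, that $\Delta_S(\hat K)=\Delta_S(K)$ also holds for every $|S|=\ell_+$. Then $\hat K$ and $K$ agree on all principal minors of order $\le\ell_+$, hence (by the identifiability part) on all principal minors, hence (by Proposition~\ref{thm:set}) $\hat K$ is obtained from $K$ by a composition of $\mathcal D_N$-similarities and block transposes; but each such operation preserves $s(\cdot)$ on every positive cycle --- a $\mathcal D_N$-similarity scales $s(C)$ by $\prod_v D_{v,v}^{\deg_C(v)}=1$ (cycle degrees being even), and a block transpose scales $s(C)$ by $\epsilon(C)=1$ for positive $C$ inside that block while fixing all other cycles --- so $s_{\hat K}(C^*)=s_K(C^*)$. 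This contradicts $s_{\hat K}(C^*)=\eta(C^*)\,s_K(C^*)=-s_K(C^*)\ne0$. Therefore $\Delta_S(\hat K)\ne\Delta_S(K)$ for some $|S|=\ell_+$, which completes the proof.
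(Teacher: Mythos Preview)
Your identifiability argument is essentially the paper's: reduce to blocks, take a minimal simple cycle basis of $\mathcal C^+(G)$, use Lemmas~\ref{lm:minor_exp}--\ref{lm:z_form} to read each $s(C_i)$ from minors of order $\le\ell_+$, and rebuild every $\Delta_S$ from the Leibniz expansion.

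Your tightness argument takes a genuinely different route. The paper flips $s(\cdot)$ on the longest basis cycle $C_1$ and then \emph{explicitly} exhibits $\Delta_{V(C_1)}(\hat K)\ne\Delta_{V(C_1)}(K)$ via a case split on $|C_1|\in\{3,4,>4\}$, reinvoking Lemmas~\ref{lm:minor_exp}--\ref{lm:z_form} and Condition~\ref{eqn:gen_prop} to force $Z\ne0$. You instead build $\hat K$ from a $\textsf{GF}(2)$ functional $\eta$ vanishing on the span $V$ of short positive simple cycles but not on $\chi_{C^*}$, check agreement on orders $<\ell_+$ directly from the Leibniz grouping, and finish by contradiction: if all order-$\ell_+$ minors also agreed, identifiability plus Proposition~\ref{thm:set} would make $\hat K$ a $\mathcal D_N$-similarity/block-transpose of $K$, but both operations fix $s(\cdot)$ on positive simple cycles (the degree and $\epsilon(C)=+1$ computations you give are correct), contradicting $s_{\hat K}(C^*)=-s_K(C^*)\ne0$. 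This is valid and cleaner in that it avoids the case analysis and the direct non-cancellation check; the trade-offs are that it is non-constructive (no specific $S$ is named) and it leans on Proposition~\ref{thm:set}, whose proof already uses the same lemmas. One wording issue: the sentence ``taking a vector of $\mathcal C^+(H^*)$ outside it \dots\ produces a positive simple cycle $C^*$ of length exactly $\ell_+$'' reads as though an arbitrary vector outside $V$ were a simple cycle; what you mean (and what is true) is that since positive simple cycles of length $\le\ell_+$ span $\mathcal C^+(H^*)$ while those of length $<\ell_+$ do not, some positive simple cycle of length exactly $\ell_+$ lies outside the span of the shorter ones, and the block decomposition of $\mathcal C(G)$ then places it outside $V$.
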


\begin{proof}
The first part of theorem follows almost immediately from Lemmas \ref{lm:minor_exp} and \ref{lm:z_form}. It suffices to consider a matrix $K \in \mathcal{K}$ with a two-connected charged sparsity graph $G = ([N],E,\epsilon)$, as the more general case follows from this one. By Lemmas \ref{lm:minor_exp}, and \ref{lm:z_form}, the quantity $s(C)$ is computable for all simple cycles $C$ in a minimal cycle basis for $\mathcal{C}^+(G)$ (and therefore for all positive simple cycles), using only principal minors of size at most the length of the longest cycle in the basis, which in this case is the simple cycle sparsity $\ell_+$ of $\mathcal{C}^+(G)$. The values $s(C)$ for positive simple cycles combined with the magnitude of the entries of $K$ and the charge function $\epsilon$ uniquely determines all principal minors, as each term in the Leibniz expansion of some $\Delta_S(K)$ corresponds to a partitioning of $S$ into a disjoint collection of vertices, pairs corresponding to edges, and oriented versions of positive simple cycles of $E(S)$. This completes the first portion of the proof.

Next, we explicitly construct a matrix $\hat K$ that agrees with $K$ in the first $\ell_+-1$ principal minors ($\ell_+\ge 3$), but disagrees on a principal minor of order $\ell_+$. To do so, we consider a minimal simple cycle basis $\big\{\chi_{C_1},...,\chi_{C_{\nu-1}}\big\}$ of $\mathcal{C}^+(G)$, ordered so that $|C_1|\ge |C_2| \ge ... \ge |C_{\nu-1}|$. By definition, $\ell_+=|C_1|$. Let $\hat K$ be a matrix satisfying
$$\hat K_{i,i} = K_{i,i} \text{ for }i \in [N], \qquad \hat K_{i,j} \hat K_{j,i} = K_{i,j} K_{j,i} \text{ for }i,j \in [N],$$
and
$$s_{\hat K}(C_i) = \begin{cases}  \text{   } s_{K}(C_i) & \text{if } i > 1 \\ - s_{K}(C_i) & \text{if } i = 1 \end{cases}.$$
 The matrices $\hat K$ and $K$ agree on all principal minors of order at most $\ell_+-1$, for if there was a principal minor where they disagreed, this would imply that there is a positive simple cycle of length less than $\ell_+$ whose incidence vector is not in the span of $\{\chi_{C_2},...,\chi_{C_{\nu-1}}\}$, a contradiction. To complete the proof, it suffices to show that $\Delta_{V(C_1)}(\hat K) \ne \Delta_{V(C_1)}(K)$. 
 
 To do so, we consider three different cases, depending on the length of $C_1$. If $C_1$ is a three-cycle, then $C_1$ is an induced cycle and the result follows immediately. If $C_1$ is a four-cycle, $G[V(C_1)]$ may possibly have multiple positive four-cycles. However, in this case the quantity $Z$ from Equation (\ref{eqn:z4}) is distinct for $\hat K$ and $K$, as $K \in \mathcal{K}$. Finally, we consider the general case when $|C_1|>4$. By Lemma \ref{lm:minor_exp}, all terms of $\Delta_{V(C)}(K)$ (and $\Delta_{V(C)}(\hat K)$) except for $Z$ depend only on principal minors of order at most $\ell_+-1$. We denote the quantity $Z$ from Lemma \ref{lm:minor_exp} for $K$ and $\hat K$ by $Z$ and $\hat Z$ respectively. By Lemma \ref{lm:z_form}, the magnitude of $Z$ and $\hat Z$ depend only on principal minors of order at most four, and so $|Z| = |\hat Z|$. In addition, because $K \in \mathcal{K}$, $Z \ne 0$. The quantities $Z$ and $\hat Z$ are equal in sign if and only if $s_{\hat K}(C_1) = s_{K}(C_1)$, and therefore $\Delta_{V(C_1)}(\hat K) \ne \Delta_{V(C_1)}(K)$.
\end{proof}

\section{Efficiently Recovering a Matrix from its Principal Minors}\label{sec:eff_alg}

In Section \ref{sec:identify}, we characterized the set of magnitude-symmetric matrices in $\mathcal{K}$ that share a given set of principal minors, and noted that principal minors of order $ \le \ell_+$ uniquely determine principal minors of all orders, where $\ell_+$ is the simple cycle sparsity of $\mathcal{C}^+(G)$ and is computable using principal minors of order one and two. In this section, we make use of a number of theoretical results of Sections \ref{sec:cyclespace} and \ref{sec:identify} to formally describe a polynomial-time algorithm to produce a magnitude-symmetric matrix $K$ with prescribed principal minors. We provide a high-level description and discussion of this algorithm in Subsection \ref{sub:efficient}, and save the description of a key subroutine for computing a positive simple cycle basis for Subsection \ref{sub:cycle_alg}. In addition, in Subsection \ref{sec:noise_alg}, we consider the more general setting in which principal minors are only approximately known up to some bounded additive error.

\subsection{An Efficient Algorithm}\label{sub:efficient}

Our algorithm proceeds by completing a number of tasks which we describe below. This procedure is very similar in nature to the algorithm implicitly described and used in the proofs of Proposition \ref{thm:set} and Theorem \ref{thm:ell}. The main difference between the procedure alluded to in Section \ref{sec:identify} and our algorithm is the computation of a positive simple cycle basis. Unlike $\mathcal{C}(G)$, there is no known polynomial-time algorithm to compute a minimal simple cycle basis for $\mathcal{C}^+(G)$, and a decision version of this problem may indeed be NP-hard. Our algorithm, which we denote by \textsc{RecoverK}$\big((\Delta_S)_{S\subseteq [N]}\big)$, proceeds in five main steps: \\

\noindent {\bf Step 1:} Compute $K_{i,i}$, $i \in [N]$, $|K_{i,j}|$, $i \ne j$, and $G_K = ([N],E,\epsilon)$. \\

\begin{adjustwidth}{1.5em}{0pt}
We recall that $K_{i,i} = \Delta_i$ and $|K_{i,j}| = |K_{j,i}| = \sqrt{|\Delta_i \Delta_j - \Delta_{i,j}|}$. The edges $\{i,j\} \in E(G)$ correspond to non-zero off-diagonal entries $|K_{i,j}|\ne 0$, and the function $\epsilon$ is given by $\epsilon_{i,j} = \text{sgn}(\Delta_i \Delta_j - \Delta_{i,j})$. \\
\end{adjustwidth}

\noindent {\bf Step 2:} For every block $H$ of $G$, compute a simple cycle basis $\{x_1,...,x_k\}$ of $\mathcal{C}^+(H)$.\\

\begin{adjustwidth}{1.5em}{0pt}
In the proof of Proposition \ref{prop:simple_basis}, we defined an efficient algorithm to compute a simple cycle basis of $\mathcal{C}^+(H)$ for any two-connected graph $H$. This algorithm makes use of the property that every two-connected graph has an open ear decomposition. Unfortunately, this algorithm has no provable guarantees on the length of the longest cycle in the basis. For this reason, we introduce an alternate efficient algorithm in Subsection \ref{sub:cycle_alg} that computes a simple cycle basis of $\mathcal{C}^+(H)$ consisting of cycles all of length at most $3 \phi_H$, where $\phi_H$ is the maximum length of a shortest cycle between any two edges in $H$, i.e.,
$$\phi_H := \max_{e,e' \in E(H)} \; \min_{\substack{\text{simple cycle } C \\ s.t. \, e,e' \in E(C) }} \; |C|$$
with $\phi_H:=2$ if $H$ is acyclic. We define $\phi_G$ to be the maximum of $\phi_H$ over all blocks $H$ of $G$, and $\phi_G:=2$ if $G$ has no blocks. The existence of a simple cycle through any two edges of a non-trivial two-connected graph can be deduced from the existence of two vertex-disjoint paths between newly added midpoints of these two edges.  The simple cycle basis constructed in Subsection \ref{sub:cycle_alg} also maximizes the number of three- and four-cycles it contains. This is a key property which allows us to limit the number of principal minors that we query. \\
\end{adjustwidth}

\noindent {\bf Step 3:} For every block $H$ of $G$, convert $\{x_1,...,x_k\}$ into a simple cycle basis satisfying Properties (\ref{it:min1})-(\ref{it:min3}) of Lemma \ref{lm:min_basis}.\\

\begin{adjustwidth}{1.5em}{0pt}
If there was an efficient algorithm for computing a minimal simple cycle basis for $\mathcal{C}^+(H)$, by Lemma \ref{lm:min_basis}, we would be done (and there would be no need for this step). However, there is currently no known algorithm for this, and a decision version of this problem may be NP-hard. By using the simple cycle basis from Step 2 and iteratively removing chords, we can create a basis that satisfies the same three key properties (in Lemma \ref{lm:min_basis}) that a minimal simple cycle basis does. In addition, the lengths of the cycles in this basis are no larger than those of Step 2, i.e., no procedure in Step 3 ever increases the length of a cycle.

The procedure for this is quite intuitive. Given a cycle $C$ in the basis, we efficiently check that $\gamma(C)$ satisfies Properties (\ref{it:min1})-(\ref{it:min3}) of Lemma \ref{lm:min_basis}. If all properties hold, we are done, and check another cycle in the basis, until all cycles satisfy the desired properties. In each case, if a given property does not hold, then, by the proof of Lemma \ref{lm:min_basis}, we can efficiently compute an alternate cycle $C'$, $|C'|<|C|$, that can replace $C$ in the simple cycle basis for $\mathcal{C}^+(H)$, decreasing the sum of cycle lengths in the basis by at least one. Because of this, the described procedure is a polynomial-time algorithm.\\
\end{adjustwidth}

\noindent {\bf Step 4:} For every block $H$ of $G$, compute $s(C)$ for every cycle $C$ in the basis. \\

\begin{adjustwidth}{1.5em}{0pt}
This calculation relies heavily on the results of Section \ref{sec:identify}. The simple cycle basis for $\mathcal{C}^+(H)$ satisfies Properties (\ref{it:min1})-(\ref{it:min3}) of Lemma \ref{lm:min_basis}, and so we may apply Lemmas \ref{lm:minor_exp} and \ref{lm:z_form}. We compute the quantities $s(C)$ iteratively based on cycle length, beginning with the shortest cycle in the basis and finishing with the longest. By the analysis in Section \ref{sec:prin_minors}, we recall that when $C$ is a three- or four-cycle, $s(C)$ can be computed efficiently using $O(1)$ principal minors, all corresponding to subsets of $V(C)$. When $|C|>4$, by Lemma \ref{lm:minor_exp}, the quantity $Z$ (defined in Lemma \ref{lm:minor_exp}) can be computed efficiently using $O(1)$ principal minors, all corresponding to subsets of $V(C)$. By Lemma \ref{lm:z_form}, the quantity $s(C)$ can be computed efficiently using $Z$, $s(C')$ for positive four-cycles $C' \subseteq G[V(C)]$, and $O(1)$ principal minors all corresponding to subsets of $V(C)$. Because our basis maximizes the number of three- and four-cycles it contains, any such four-cycle $C'$ is either in the basis (and so $s(C')$ has already been computed) or is a linear combination of three- and four-cycles in the basis, in which case $s(C')$ can be computed using Gaussian elimination, without any additional querying of principal minors.  \\
\end{adjustwidth}

\noindent {\bf Step 5:} Output a matrix $K$ satisfying $\Delta_S(K)= \Delta_S$ for all $S \subseteq [N]$. \\

\begin{adjustwidth}{1.5em}{0pt}
The procedure for producing this matrix is quite similar to the proof of Proposition \ref{thm:set}. It suffices to fix the signs of the upper triangular entries of $K$, as the lower triangular entries can be computed using $\epsilon$. For each block $H$, we find a negative simple cycle $C$ (if one exists), fix a negative edge $e \in E(C)$, and extend $E(C)\backslash e$ to a spanning tree $T$ of $H$, i.e., $\big[E(C) \backslash e\big] \subseteq E(T)$. We give the entries $K_{i,j}$, $i<j$, $\{i,j\} \in E(T)\cup e$ an arbitrary sign, and note our choice of $s_K(C)$. We extend the simple cycle basis for $\mathcal{C}^+(H)$ to a basis for $\mathcal{C}(H)$ by adding $C$. On the other hand, if no negative cycle exists, then we simply fix an arbitrary spanning tree $T$, and give the entries $K_{i,j}$, $i<j$, $\{i,j\} \in E(T)$ an arbitrary sign. In both cases, we have a basis for $\mathcal{C}(H)$ consisting of cycles $C_i$ for which we have computed $s(C_i)$. For each edge $\{i,j\} \in E(H)$ corresponding to an entry $K_{i,j}$ for which we have not fixed the sign of, we consider the cycle $C'$ consisting of the edge $\{i,j\}$ and the unique $i-j$ path in $T$. Using Gaussian elimination, we can write $C'$ as a sum of a subset of the cycles in our basis. As noted in Section \ref{sec:identify}, the quantity $s(C')$ is simply the product of the quantities $s(C_i)$ for cycles $C_i$ in this sum. \\
\end{adjustwidth}

 A few comments are in order. Conditional on the analysis of Step 2, the above algorithm runs in time polynomial in $N$. Of course, the set $(\Delta_S)_{S\subseteq [N]}$ is not polynomial in $N$, but rather than take the entire set as input, we assume the existence of some querying operation in which the value of any principal minor can be queried/computed in polynomial time. Combining the analysis of each step, we can give the following guarantee for the \textsc{RecoverK}$\left((\Delta_S)_{S\subseteq [N]}\right)$ algorithm.

 \begin{theorem}\label{thm:algorithm}
 Let $K\in\mathcal K$ and let $\Delta_S=\Delta_S(K)$, for all $S\subseteq [N]$. 
 \begin{itemize}
     \item The algorithm \textsc{RecoverK}$\big((\Delta_S)_{S\subseteq [N]}\big)$ computes a matrix $K'\in \mathcal{K}$ satisfying $\Delta_S(K')=\Delta_S$ for all $S \subseteq [N]$. This algorithm runs in time polynomial in $N$ and queries at most $O(N^2)$ principal minors, all of order at most $3 \, \phi_G$, where $G$ is the sparsity graph of $K$.
     \item In addition, there exists a matrix $\tilde K \in \mathcal{K}$, with $|\tilde K_{i,j}| =|K_{i,j}|$ for all $i,j \in [N]$, such that any algorithm that computes a matrix with principal minors $(\Delta_S(\tilde K))_{S\subset[N]}$ must query a principal minor of order at least $\phi_G$.
 \end{itemize}
 \end{theorem}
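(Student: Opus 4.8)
The statement has two halves --- correctness and complexity of \textsc{RecoverK}, and a matching lower bound --- and I would establish them separately.

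For the first half the plan is simply to collect the per-step analyses of Steps~1--5 into one argument. Correctness: Step~1 recovers $K_{i,i}$, the magnitudes $|K_{i,j}|$, and $G_K=([N],E,\epsilon)$ from the order-$\le 2$ minors exactly as in Section~\ref{sec:prin_minors}; Steps~2--3 produce, for each block $H$, a simple cycle basis of $\mathcal C^+(H)$ whose cycles satisfy Properties~(\ref{it:min1})--(\ref{it:min3}) of Lemma~\ref{lm:min_basis} and have edge length at most $3\phi_H$ (the guarantee of Step~2, which the local repairs of Step~3 never worsen); Step~4 uses Lemmas~\ref{lm:minor_exp} and~\ref{lm:z_form} to compute $s(C)$ for every basis cycle, processing cycles in increasing length order; and Step~5 outputs a matrix $K'\in\mathcal K$ with the prescribed diagonal, magnitudes and $\epsilon$, and with $s_{K'}(C)=s_K(C)$ for every positive simple cycle $C$, obtained by propagating the computed signs around a spanning tree plus one negative edge as in the proof of Proposition~\ref{thm:set}. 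Since every term of the Leibniz expansion of any $\Delta_S$ is a product of diagonal entries, edge-products $K_{i,j}K_{j,i}$, and oriented positive simple cycles of $G[S]$ --- data which $K'$ matches --- one gets $\Delta_S(K')=\Delta_S$ for all $S$. Complexity: each of the five steps runs in time polynomial in $N$ (in Step~3 each repair strictly decreases $\sum_i|C_i|$, so there are polynomially many); the only queries are the $O(N^2)$ minors of order $\le 2$ in Step~1 and, in Step~4, $O(1)$ minors per basis cycle $C$, each of order $\le |C|\le 3\phi_H\le 3\phi_G$. The number of basis cycles over all blocks is $\sum_H\dim\mathcal C^+(H)\le\sum_H|E(H)|=|E|=O(N^2)$, and because the Step~2 basis is chosen to contain as many $3$- and $4$-cycles as possible, the four-cycle signs appearing in Lemma~\ref{lm:z_form} are obtainable from already-computed basis signs by Gaussian elimination and cost no additional queries; hence $O(N^2)$ queries in total, all of order at most $3\phi_G$.

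For the lower bound I would run a two-matrix adversary argument through Proposition~\ref{thm:set}. Assume $\phi_G\ge 3$ (when $\phi_G=2$, and $G$ has an edge, a single perturbed off-diagonal magnitude does the job). Choose a block $H$ with $\phi_H=\phi_G$ and a pair of edges $e=\{a,b\},e'\in E(H)$ attaining $\phi_H=\min_{C\ni e,e'}|C|$, and let $C_0$ be a shortest simple cycle through both, so $|C_0|=\phi_G$. Let $\tilde K$ have $|\tilde K_{i,j}|=|K_{i,j}|$ and signs chosen so that its charged sparsity graph assigns $\epsilon(e)=\epsilon(e')=-1$ and $\epsilon(f)=+1$ to every other edge $f$ (always possible, and $\tilde K\in\mathcal K$ automatically); note $\epsilon(C_0)=+1$. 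Let $K'$ be $\tilde K$ with the two entries $\tilde K_{a,b}$ and $\tilde K_{b,a}$ negated, so $K'$ has the same magnitudes and the same $\epsilon$ as $\tilde K$. Then: (i) $\tilde K$ and $K'$ agree on $\Delta_S$ for every $|S|<\phi_G$, because such a minor depends only on magnitudes, $\epsilon$, and the signs $s(\cdot)$ of the positive simple cycles of $G[S]$ --- all of length $<\phi_G$ --- and a positive cycle contains an even number of the negative edges $\{e,e'\}$ while a cycle through both has length $\ge\phi_G$, so these short positive cycles avoid $e$ and are untouched by the flip; and (ii) $K'$ lies outside the equivalence class of $\tilde K$ from Proposition~\ref{thm:set}, since neither $\mathcal D_N$-similarity (the product of the diagonal signs over the edges of a cycle is $+1$) nor block transpose (which multiplies $s(C)$ by $\epsilon(C)=+1$ for positive $C$) alters $s(C)$ for a positive cycle $C$, whereas $s_{K'}(C_0)=-s_{\tilde K}(C_0)$. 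By Proposition~\ref{thm:set}, (ii) forces $\Delta_{S_0}(\tilde K)\ne\Delta_{S_0}(K')$ for some $S_0$, and by (i) that $S_0$ has $|S_0|\ge\phi_G$. Hence any algorithm querying only minors of order $<\phi_G$ receives identical answers on inputs $\tilde K$ and $K'$, outputs one matrix for both, and thus fails on at least one of them; it must query a minor of order at least $\phi_G$.

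The main obstacle is the lower bound, and within it the sensitive point is step~(i): pinning down exactly which low-order minors a single-edge sign flip can disturb. Routing this through Proposition~\ref{thm:set}, rather than trying to evaluate $\Delta_{S_0}(\tilde K)-\Delta_{S_0}(K')$ directly, is what keeps the argument clean --- a direct computation would have to control possible cancellations among several spanning positive simple cycles of $G[V(C_0)]$, precisely the phenomenon already visible in the order-four discussion of Section~\ref{sec:prin_minors} --- and the charge assignment making $e,e'$ the only negative edges, combined with the choice of $e,e'$ as a pair realizing $\phi_H$, is exactly what renders the flip invisible below order $\phi_G$.
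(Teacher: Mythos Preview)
Your argument is correct. The first half is essentially identical to the paper's: both simply collect the per-step guarantees of Steps~1--5, with Step~2's $3\phi_H$ bound and the basis's maximization of three- and four-cycles controlling the query complexity.

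For the lower bound you use the same key construction as the paper --- take edges $e,e'$ realizing $\phi_G$ in a block and set $\epsilon(e)=\epsilon(e')=-1$, all other charges $+1$ --- but you finish differently. The paper observes that this charge forces $\ell_+\ge\phi_G$ and then invokes Theorem~\ref{thm:ell} directly. You instead build an explicit companion matrix $K'$ (negating both entries on $e$), verify by hand that low-order minors are unchanged (positive simple cycles of length $<\phi_G$ must avoid both $e$ and $e'$, hence avoid $e$), and then appeal to Proposition~\ref{thm:set} rather than Theorem~\ref{thm:ell} to produce a minor on which $\tilde K$ and $K'$ differ. This is a minor variation: you are effectively re-deriving the special case of Theorem~\ref{thm:ell}'s second half needed here. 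Your route is a bit more self-contained (it does not use Lemmas~\ref{lm:minor_exp}--\ref{lm:z_form} via Theorem~\ref{thm:ell}), while the paper's is shorter since it reuses an already-proved theorem; substantively the two arguments are the same.
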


The first part of this theorem provides an upper bound for the complexity of solving \eqref{q3}.
The second part asserts that given any $K\in\mathcal K$, one can change the charges of $K$ such that any algorithm that solves \eqref{q3} given the principal minors of the new matrix needs to query a principal minor of high order - namely, of order at least $\phi_G$.

 \begin{proof}
 Conditional on the existence of the algorithm described in Step 2, i.e., an efficient algorithm to compute a simple cycle basis for $\mathcal{C}^+(H)$ consisting of cycles of length at most $3 \phi_H$ that maximizes the number of three- and four-cycles it contains, by the above analysis we already have shown that the \textsc{RecoverK}$\big((\Delta_S)\big)$ algorithm runs in time polynomial in $N$ and queries at most $O(N^2)$ principal minors.
 
 To construct $K'$, we first consider the uncharged sparsity graph $G = ([N],E)$ of $K$, and let $e,e'\in E(G)$ be a pair of edges for which the quantity $\phi_G$ is achieved (if $\phi_G = 2$, we are done). We aim to define an alternate charge function for $G$, show that the simple cycle sparsity of $\mathcal{C}^+(G)$ is at least $\phi_G$, find a matrix $K'$ that has this charged sparsity graph, and then make use of Theorem \ref{thm:ell}. Consider the charge function $\epsilon$ satisfying $\epsilon(e) = \epsilon(e')=-1$, and equal to $+1$ otherwise. Any positive simple cycle basis for the block containing $e,e'$ must have a simple cycle containing both edges $e,e'$. By definition, the length of this cycle is at least $\phi_G$, and so the simple cycle sparsity of $\mathcal{C}^+(G)$ is at least $\phi_G$. Next, let $K'$ be an arbitrary matrix with $|K'_{i,j}| =|K_{i,j}|$, $i,j \in [N]$, and charged sparsity graph $G = ([N],E,\epsilon)$, with $\epsilon$ as defined above. $K \in \mathcal{K}$, and so $K' \in \mathcal{K}$, and therefore, by Theorem \ref{thm:ell}, any algorithm that computes a matrix with principal minors $(\Delta_S(K'))_{S\subset[N]}$ must query a principal minor of order at least $\ell_+ \ge \phi_G$.
 \end{proof}

\subsection{Computing a Simple Cycle Basis with Provable Guarantees}\label{sub:cycle_alg}

Let $H=([N],E,\varepsilon)$ be a two-connected charged graph. We now describe an efficient algorithm to compute a simple cycle basis for $\mathcal{C}^+(H)$ consisting of cycles of length at most $3 \phi_H$. We first compute a minimal cycle basis $\{\chi_{C_1},...,\chi_{C_{\nu}}\}$ of $\mathcal{C}(H)$, where each $C_i$ is an induced simple cycle (as argued previously, any lexicographically minimal basis for $\mathcal{C}(H)$ consists only of induced simple cycles). Without loss of generality, suppose that $C_1$ is the shortest negative cycle in the basis (if no negative cycle exists, we are done). The set of incidence vectors $\chi_{C_1}+\chi_{C_i}$, $\epsilon(C_i) = -1$, combined with vectors $\chi_{C_j}$, $\epsilon(C_j)=+1$, forms a basis for $\mathcal{C}^+(H)$, which we denote by $\mathcal{B}$. We will build a simple cycle basis for $\mathcal{C}^+(H)$ by iteratively choosing incidence vectors of the form $\chi_{C_1}+\chi_{C_i}$, $\epsilon(C_i) = -1$, replacing each of them with an incidence vector $\chi_{\widetilde C_i}$ corresponding to a positive simple cycle $\widetilde C_i$, and iteratively updating $\mathcal{B}$.

% We will build a simple cycle basis for the positive cycle space $\mathcal{C}^+(H)$ by considering the positive cycles $C_1 + C_i$, $i \ne 1$, for all $C_i$ negative, and replacing each by a positive simple cycle, i.e., for each $C_1 + C_i$, we replace $C_1 + C_i$ with a simple positive cycle $\tilde C_i$ that is not in the span in the  $\chi_{\tilde C_i}$ is not in the span of $\{\chi_{C_1}, \, ..., \, \chi_{C_{i-1}}, \, \chi_{C_{i+1}}, \, \chi_{C_{\nu}}\}$. \marginparsmall{This last statement is wrong (and doesn't involve $C_1$). $\hat{C}_i$ and $\hat{C}_j$ could simply be both $C_i+C_j$ but they are not independent. You can't replace all $C_i$ independently by $\hat{C}_i$.}

Let $C: = C_1 + C_i$ be a positive cycle in our basis $\mathcal{B}$. If $C$ is a simple cycle, we are done, otherwise $C$ is the union of edge-disjoint simple cycles $F_1,..., F_p$ for some $p>1$. If one of these simple cycles is positive and satisfies
$$\chi_{F_j} \not \in \text{span}\big\{\mathcal{B} \backslash \{\chi_{C_1} + \chi_{C_i}\} \big\},$$
 we simply replace $C$ with $F_j$ and we are done. Otherwise, one of $F_1,\ldots,F_p$ must be a negative simple cycle. Indeed, if all $F_j$'s were positive, then $\chi_{F_j} \in \text{span}\big\{\mathcal{B} \backslash \{\chi_{C_1} + \chi_{C_i}\} \big\}$ for all $j=1,\ldots,p$, yielding that $\chi_C=\chi_{F_1}+\ldots+\chi_{F_p}\in \text{span}\big\{\mathcal{B} \backslash \{\chi_{C_1} + \chi_{C_i}\} \big\}$, which is impossible.
Without loss of generality, assume that $F_1$ is a negative cycle. We can construct a set of $p-1$ positive cycles by taking $F_1+F_j$ for all $j=1,\ldots,p$ with $\varepsilon(F_j)=-1$ and keeping all positive $F_j$'s, and at least one of these cycles is not in
$\text{span}\big\{\mathcal{B} \backslash \{\chi_{C_1} + \chi_{C_i}\} \big\}$. We now have a positive cycle not in this span, given by the sum of two {\it edge-disjoint} negative simple cycles $F_1$ and $F_j$. These two cycles satisfy, by construction, $|F_1|+|F_j|\le |C| \le 2 \ell$, where $\ell$ is the cycle sparsity of $\mathcal{C}(H)$. If $|V(F_1)\cap V(F_j)|>1$, then $F_1 \cup F_j$ is two-connected, and we may compute a simple cycle basis for $\mathcal{C}^+(F_1 \cup F_j)$ using the ear decomposition approach of Proposition \ref{prop:simple_basis}. At least one positive simple cycle in this basis satisfies our desired condition, and the length of this positive simple cycle is at most $|E(F_1 \cup F_j)| \le 2 \ell$. If $F_1 \cup F_j$ is not two-connected, then we compute a shortest cycle $\widetilde C$ in $E(H)$ containing both an edge in $E(F_1)$ and $E(F_j)$ (computed using Suurballe's algorithm, see \cite{suurballe1974disjoint} for details). The graph
$$H' = \big( V(F_1) \cup V(F_j) \cup V(\widetilde C),E(F_1) \cup E(F_j) \cup E(\widetilde C)\big)$$
is two-connected, and we can repeat the same procedure as above for $H'$, where, in this case, the resulting cycle is of length at most $|E(H')| \le 2 \ell + \phi_H$. The additional guarantee for maximizing the number of three- and four-cycles can be obtained easily by simply listing all positive simple cycles of length three and four, combining them with the computed basis, and performing a greedy algorithm. What remains is to note that $\ell \le \phi_H$. This follows quickly from the observation that for any vertex $u$ in any simple cycle $C$ of a minimal cycle basis for $\mathcal{C}(H)$, there exists an edge $\{v,w\} \in E(C)$ such that $C$ is the disjoint union of a shortest $u-v$ path, shortest $u-w$ path, and $\{v,w\}$ \cite[Theorem 3]{horton1987polynomial}.

\subsection{An Algorithm for Principal Minors with Noise}\label{sec:noise_alg}
So far, we have exclusively considered the situation in which principal minors are known (or can be queried) exactly. In applications related to this work, this is often not the case. A key application of a large part of this work is signed determinantal point processes, and the algorithmic question of learning the kernel of a signed DPP from some set of samples. Here, for the sake of readability, we focus on a non-probabilistic setting. When principal minors are approximated by random samples from a DPP, the below results can be easily converted to high probability guarantees via a union bound. Given some unknown matrix $K\in\mathcal K$ with all principal minors satisfying $|\Delta_S(K)|\leq 1, S\subseteq [N]$, each $\Delta_S(K)$ can be queried/estimated up to some absolute error term $0<\delta < 1$, i.e., we have access to a collection $(\hat \Delta_S)_{S \subseteq [N]}$, satisfying 
$$ \big| \hat \Delta_S - \Delta_S(K) \big | \le \delta \qquad \text{for all } S\subseteq [N].$$
Our goal is to compute a matrix $K'$ with small error, measured by the quantity
$$\rho(K,K'):= \; \min \big\{| \hat K - K'|_{\infty} \quad \text{s.t.} \quad \hat K \in \mathcal{K},  \; \Delta_S(\hat K) = \Delta_S(K) \text{ for all } S \subseteq [N]\big\},$$
where $|K -K'|_{\infty} := \max_{1\leq i,j\leq N} | K_{i,j} - K'_{i,j}|$. %Given independent samples from a signed DPP, a corresponding $\delta$ that holds with high probability can be computed using a union bound.
We note that because $|\Delta_S(K)|\le 1$, for all $S\subseteq [N]$, we may always assume that $|\hat \Delta_S|\le 1$ as well, for if it is not, then we may safely replace it with $\text{sgn}(\hat \Delta_S)$. In this setting, we require both a separation condition for the entries of $K$ and a stronger version of Condition \ref{eqn:gen_prop}. Let $\alpha,\beta,\gamma\in (0,1]$.

\begin{condition} \label{cond:approx}
For all $i,j\in [N]$ with $i\neq j$, either $|K_{i,j}|=0$ or $|K_{i,j}|\geq \alpha$ and if $K_{i,j} K_{j,k} K_{k,\ell} K_{\ell,i} \ne 0 $ for some distinct $i,j,k,\ell \in [N]$, then
\begin{itemize}
    \item[(i)] $\big||K_{i,j} K_{k,\ell}|-|K_{j,k} K_{\ell,i}|\big|\geq \beta$;
    \item[(ii)] For each $\phi_1,\phi_2,\phi_3 \in \{-1,1\}$, 
    $$\big|\phi_1 \, K_{i,j} K_{j,k} K_{k,\ell} K_{\ell,i} + \phi_2 K_{i,j} K_{j,\ell} K_{\ell,k} K_{k,i}+\phi_3 K_{i,k} K_{k,j} K_{j,\ell} K_{\ell,i}\big| \ge \gamma.$$
\end{itemize} 
\end{condition}

We denote by $\mathcal{K}_N(\alpha,\beta,\gamma)$ the set of matrices $K \in \mathcal{K}_N$ satisfying Condition \ref{cond:approx} and such that $|\Delta_S(K)|\le 1$ for all $S \subseteq [N]$. 
Condition \ref{cond:approx} is a stronger version of Condition \ref{eqn:gen_prop} that accounts for the error in principal minors.

The algorithm \textsc{RecoverK}, slightly modified, performs well for this more general problem. In Appendix \ref{sec:app}, we define a natural modification, called \textsc{RecoverK$^\approx$}. The following theorem illustrates that for $\delta$ sufficiently small (with respect to $\alpha,\beta,\gamma$) our algorithm performs well.

\begin{theorem}\label{thm:apx_alg}
Let $K \in \mathcal{K}_N(\alpha,\beta,\gamma)$ and $\{\hat \Delta_S\}_{S \subseteq [N]}$ satisfy $| \hat \Delta_S - \Delta_S(K) | \le \delta $ for all $S \subseteq [N]$, for some 
$$\delta < \alpha \min\{ \alpha^{3 \phi_G}, \beta^{3 \phi_G/2},\gamma \}/100,$$
where $G$ is the sparsity graph of $K$. The \textsc{RecoverK$^\approx$}$\big(\{\hat \Delta_S\}, \, \delta \big)$ algorithm outputs a matrix $K'$ satisfying 
$$\rho(K,K') \le 3 \delta/2 \alpha.$$
This algorithm runs in time polynomial in $N$, and queries at most $O(N^2)$ principal minors, all of order at most $3 \, \phi_G$.
\end{theorem}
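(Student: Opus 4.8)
The plan is to show that the exact algorithm \textsc{RecoverK} is robust: each quantity it computes from principal minors is either a sign that is bounded away from zero (by one of the quantities $\alpha^{3\phi_G}$, $\beta^{3\phi_G/2}$, $\gamma$) or a magnitude that varies Lipschitz-continuously in the principal minors. Since $\delta$ is chosen smaller than a small constant times $\alpha\cdot\min\{\alpha^{3\phi_G},\beta^{3\phi_G/2},\gamma\}$, every sign decision made by the algorithm on the perturbed inputs $\{\hat\Delta_S\}$ agrees with the decision it would make on the true $\{\Delta_S(K)\}$, so \textsc{RecoverK$^\approx$} recovers the correct charged sparsity graph $G$, the correct cycle basis, and the correct signs $s(C)$ for every basis cycle. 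First I would establish the structural invariants: in Step 1, $|K_{i,j}| = \sqrt{|\Delta_i\Delta_j - \Delta_{i,j}|} \ge \alpha$ when nonzero, so with error $\delta \ll \alpha^2$ one recovers $E(G)$ and $\epsilon$ exactly, and estimates $|K_{i,j}|$ to within $O(\delta/\alpha)$. Then I would track, through Steps 2--4, that the only places the algorithm branches on a sign are (a) the three- and four-cycle computations of Section \ref{sec:prin_minors} (Equations \eqref{eqn:threecycle}, \eqref{eqn:z4_solo}, \eqref{eqn:z4_phi}), and (b) the evaluation of $\mathrm{sgn}\big(1 - \epsilon_{i_a,i_{a+1}} K_{i_{b-1},i_a}K_{i_{a+1},i_b}/(K_{i_a,i_{a+1}}K_{i_{b-1},i_b})\big)$ and of $\mathrm{sgn}(Z)$ in Lemma \ref{lm:z_form}.

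For each of these branch points I would produce a quantitative gap. For the four-cycle disambiguation via \eqref{eqn:z4_phi}, Condition \ref{cond:approx}(ii) gives $|Z| \ge 2\gamma$, and the three magnitudes are pairwise separated by $\ge \beta \alpha^2$ by Condition \ref{cond:approx}(i) composed with the $|K_{i,j}|\ge\alpha$ bound; since $Z$ is a polynomial of degree $4$ in entries bounded by $1$ with coefficients from the $\{\hat\Delta_S\}$, its computed value is within $O(\delta/\alpha)$ of the truth, which is $\ll \gamma$, so the unique sign pattern $(\phi_1,\phi_2,\phi_3)$ is identified correctly. For the chord-ratio signs, Condition \ref{cond:approx}(i) gives $\big||K_{i_a,i_{a+1}}K_{i_{b-1},i_b}| - |K_{i_{b-1},i_a}K_{i_{a+1},i_b}|\big| \ge \beta$, so the relevant difference is bounded away from zero by $\beta$ while the estimation error in each factor is $O(\delta/\alpha^?)$ — here one must be careful, since a long cycle involves a product of up to $3\phi_G$ entries, which is exactly why the hypothesis contains the factors $\alpha^{3\phi_G}$ and $\beta^{3\phi_G/2}$: the product $K_{i_k,i_1}\prod_{j} K_{i_j,i_{j+1}}$ has magnitude $\ge \alpha^{3\phi_G}$, and each bracketed factor in Lemma \ref{lm:z_form} has magnitude $\ge \beta/(\text{something}\le 1)$, so $|Z|\gtrsim \alpha^{3\phi_G}\cdot(\text{product of}\le 3\phi_G/2 \text{ bracket factors})\gtrsim \alpha^{3\phi_G}\beta^{3\phi_G/2}$, which dominates the accumulated estimation error $O(\delta \cdot 3\phi_G/\alpha)$ precisely because of the way $\delta$ is bounded. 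Propagating these bounds shows every $s(C)$ is computed correctly, hence the algorithm in Step 5 outputs a matrix $K'$ whose sign pattern (and charged sparsity graph) matches some $\hat K\in\mathcal K$ with $\Delta_S(\hat K)=\Delta_S(K)$ for all $S$, and whose magnitudes satisfy $\big||K'_{i,j}| - |\hat K_{i,j}|\big| \le 3\delta/2\alpha$ from the Step-1 estimate refined using $|K_{i,j}|\ge\alpha$; this gives $\rho(K,K')\le 3\delta/2\alpha$.

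The main obstacle, and the part that requires the most care, is controlling error amplification through the long recursion in Lemma \ref{lm:z_form}: the quantity $Z$ for a cycle of length $k \le 3\phi_G$ is built from a product of $k$ matrix entries and up to $k/2$ bracketed ratio-factors, and at each stage both the multiplicative gap (lower bound on $|Z|$) and the additive estimation error can degrade. One must verify that the true $|Z|$ never drops below roughly $\alpha^{3\phi_G}\beta^{3\phi_G/2}$ uniformly over all basis cycles — this is where Conditions \ref{cond:approx}(i)--(ii) and the separation $|K_{i,j}|\ge\alpha$ are used in combination — and that the error in the \emph{computed} $Z$ (which also inherits error from the $O(1)$ principal minors feeding the non-$Z$ terms of Lemma \ref{lm:minor_exp} and from the lower-order $s(C')$ values, already certified correct by induction on cycle length) stays below that threshold; the constant $100$ in the hypothesis on $\delta$ is exactly the slack needed to absorb all these polynomially-many $O(1)$ factors. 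Once the sign-stability is in hand, the running-time and query-count claims are inherited verbatim from Theorem \ref{thm:algorithm}, since \textsc{RecoverK$^\approx$} performs the same operations with the same combinatorial structure.
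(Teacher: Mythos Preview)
Your overall strategy matches the paper's: verify that $G_{K'}=G_K$, then check case-by-case that each sign decision ($|C|=3$, $|C|=4$, $|C|>4$) is stable under the $\delta$-perturbation. The Step~1 analysis and the three- and four-cycle cases are essentially right.

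The gap is in Case~III, and it is quantitative but genuine: your lower bound on $|Z|$ is too weak to match the hypothesis on $\delta$. You write $|Z|\gtrsim \alpha^{3\phi_G}\cdot\beta^{3\phi_G/2}$, treating the cycle-edge product and the bracket factors as independent. In fact the bracket denominators $K_{i_a,i_{a+1}}K_{i_{b-1},i_b}$ are themselves cycle edges appearing in the leading product, and after cancelling them Lemma~\ref{lm:z_form} gives
\[
|Z| \;=\; 2\!\!\prod_{\substack{\text{cycle edges}\\ \text{not in a $U$-pair}}}\!\!|K_{\cdot,\cdot}|\;\cdot\;\prod_{(a,b)\in U}\bigl|K_{i_a,i_{a+1}}K_{i_{b-1},i_b}-\epsilon_{i_a,i_{a+1}}K_{i_{b-1},i_a}K_{i_{a+1},i_b}\bigr|\;\ge\;2\,\alpha^{\,k-2|U|}\beta^{|U|}.
\]
Since $\log\bigl(\alpha^{k-2|U|}\beta^{|U|}\bigr)$ is linear in $|U|\in[0,k/2]$, its minimum is at an endpoint, yielding $|Z|\ge 2\min\{\alpha^{k},\beta^{k/2}\}\ge 2\min\{\alpha^{3\phi_G},\beta^{3\phi_G/2}\}$. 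This $\min$, not the product, is what the hypothesis $\delta<\alpha\min\{\alpha^{3\phi_G},\beta^{3\phi_G/2},\gamma\}/100$ is calibrated to. With your product bound you would need $\delta\lesssim\alpha^{3\phi_G+1}\beta^{3\phi_G/2}$, which the theorem does not assume.

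Second, your error estimate $|Z-\hat Z|=O(\delta\cdot 3\phi_G/\alpha)$ is also off: there is no recursion of length $k$. Lemma~\ref{lm:minor_exp} expresses $Z$ as $\Delta_S$ minus a \emph{fixed} $O(1)$ number of products of at most five principal minors each (plus one four-cycle term), so $|Z-\hat Z|\le 77\delta+24\sqrt{2}\,\delta/\alpha$ with absolute constants, independent of $k$. This is what allows the single constant $100$ to suffice. With your $\phi_G$-dependent error, the constant $100$ would not close the argument. Once you fix these two points, the rest of your outline goes through exactly as the paper's proof does.
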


The formal description of the \textsc{RecoverK$^\approx$} algorithm, as well as the proof of Theorem \ref{thm:apx_alg}, adds little insight to the problem itself, and is reserved for Appendix \ref{sec:app}. The following example illustrates why the requirement on $\delta$ in Theorem \ref{thm:apx_alg} cannot be improved in general (up to multiplicative constants in the exponent) for any algorithm. In the below example, $\delta = O(\min(\alpha^{\phi_G},\beta^{\phi_G/2}))$, and there are two possible choices for $K$ for which any output matrix has $\rho(\cdot,K')\ge \alpha$ for at least one of these two choices.

\begin{example}
Let $G = ([N],E,\epsilon)$, where $N>4$ and even,
$$E = \{1,N\} \cup \big\{\{i,i+1\}\big\}_{i=1}^{N-1} \cup \big\{\{i,N-i\},\{i+1,N-i+1\}\big\}_{i=1}^{N/2 - 1},$$
and 
$$ \epsilon(e) = \begin{cases} -1 & \text{for } e = \{1,N\},\{N/2,N/2+1\}, \\ +1 & \text{otherwise}  \end{cases}.$$
This graph can be thought of as an $N$ vertex cycle $C := 1 \; 2 \; ... \; N \; 1$ with $N/2-1$ pairs of crossing chords between edges $\{i,i+1\}$ and $\{N-i,N-i+1\}$, $i = 1,...,N/2-1$. One key property of this graph is that any positive simple cycle containing $\{1,N\}$ or $\{N/2,N/2+1\}$ must contain both and span $[N]$. Consider $K,\hat K \in \mathcal{K}_n(\alpha,\alpha^2,2\alpha^4)$ with charged sparsity graph $G$, with
$$K_{i,j} = \begin{cases} \sqrt{2} \alpha & \text{for } \{i,j\} \in \big\{\{i,N-i\},\{i+1,N-i+1\}\big\}_{i=1}^{N/2 - 1} \\
\alpha & \text{for } i \equiv j-1 \mod N.
\end{cases},$$
i.e., $K$ equals $\sqrt{2} \alpha$ on the crossing chords, and $\alpha$ on the cycle $C$, with the exception of $K_{1,N} = K_{N/2+1,N/2} = - \alpha$, and
$$\hat K_{i,j} = \begin{cases} - K_{i,j} & \text{for } \{i,j\}  = \{1,N\} \\
K_{i,j} & \text{otherwise} \end{cases}.$$
Any positive simple cycle containing $\{1,N\}$ is of length $N$, and so $K$ and $\hat K$ agree on all principal minors of order less than $N$. Using Lemmas \ref{lm:minor_exp} and \ref{lm:z_form}, we note that for $\Delta_{[N]}$, $K$ and $\hat K$ agree everywhere except for the value of $Z$. Because $s_K(C) = - s_{\hat K}(C)$, $Z_K = - Z_{\hat K}$, and
$$\big|\Delta_{[N]}(K) - \Delta_{[N]}(\hat K) \big| = 2 |Z_{K}| = 4 \alpha^N.$$
Taking $\delta = 2 \alpha^N$, and setting 
$$\hat \Delta_S = \begin{cases} \Delta_S(K) & S \ne [N] \\ (\Delta_{[N]}(K) + \Delta_{[N]}(\hat K))/2 & S = [N] \end{cases},$$
any matrix $K'$ output from the input $\{\hat \Delta_S\}$ has either $\rho(K,K')\ge \alpha$ or $\rho(\hat K,K')\ge \alpha$.
\end{example}

\section*{Acknowledgements}

The work of J. Urschel was supported in part by ONR Research Contract N00014-17-1-2177. This material is based upon work supported by the Institute for Advanced Study and the National Science Foundation under Grant No. DMS-1926686. The second author would like to thank Michel Goemans for many interesting conversations on the subject. An earlier draft of this work appeared in the second author's PhD thesis \cite{urschel2021graphs}. The authors are grateful to Louisa Thomas for greatly improving the style of presentation.

{ \small 
	\bibliographystyle{plain}
	\bibliography{main} }
	
	\newpage
	\appendix
	\section{The RecoverK$^\approx$ Algorithm and a Proof of Theorem \ref{thm:apx_alg}}\label{sec:app}

This Appendix consists of two parts: in Subsection A.1 we define the \textsc{RecoverK$^\approx$} algorithm, and in Subsection A.2 we prove Theorem \ref{thm:apx_alg}.

\subsection{The RecoverK$^\approx$ Algorithm}

The algorithm to compute a $K'$ sufficiently close to $K$,  denoted by \textsc{RecoverK$^\approx$} $\big(\{\hat \Delta_S\}; \delta \big)$, proceeds in three main steps:\\

\noindent {\bf Step 1:} Compute $K'_{i,i}$, $i \in [N]$, $|K'_{i,j}|$, $i \ne j$, and $G_{K'} = ([N],E,\epsilon)$. \\

\begin{adjustwidth}{1.5em}{0pt}
We define $K'_{i,i} = \hat \Delta_i$. The difference between the quantities $K_{i,j}K_{j,i} = \Delta_i \Delta_j - \Delta_{i,j}$ and $\hat \Delta_i \hat \Delta_j - \hat \Delta_{i,j}$ are at most
\begin{equation}\label{eqn:offdiag}
    \big| K_{i,j}K_{j,i} - ( \hat \Delta_i \hat \Delta_j - \hat \Delta_{i,j}) \big| \le \big| \hat \Delta_i \big| \big| \Delta_j - \hat \Delta_j\big| + \big| \Delta_j \big| \big| \Delta_i - \hat \Delta_i \big| + \big| \Delta_{i,j} - \hat \Delta_{i,j} \big| \le 3 \delta,
\end{equation}
and so we impose
$$  K'_{i,j} K'_{j,i}  := \begin{cases} \hat \Delta_i \hat \Delta_j - \hat \Delta_{i,j}  & \text{if} \quad |\hat \Delta_i \hat \Delta_j - \hat \Delta_{i,j} | > 3\delta \\ \qquad 0 & \text{otherwise} \end{cases}.$$
The edges $\{i,j\} \in E(G_{K'})$ correspond to non-zero off-diagonal entries $|K'_{i,j}|\ne 0$, and the function $\epsilon$ is given by $\epsilon_{i,j} = \text{sgn}(K'_{i,j} K'_{j,i})$. By construction, $G_{K'}$ is a subgraph of $G_K$, i.e., all edges of $G_{K'}$ are in $G_{K}$ and agree in sign. \\
\end{adjustwidth}

\noindent {\bf Step 2:} For every block $H$ of $G_{K'}$, compute a simple cycle basis $\{x_1,...,x_k\}$ of $\mathcal{C}^+(H)$ satisfying Properties (\ref{it:min1})-(\ref{it:min3}) of Lemma \ref{lm:min_basis}, and define $s_{K'}(C)$ for every cycle $C$ in the basis. \\

\begin{adjustwidth}{1.5em}{0pt}
The computation of the simple cycle basis depends only on the graph, and so is identical to Steps 1 and 2 of the \textsc{RecoverK} algorithm. The simple cycle basis for $\mathcal{C}^+(H)$ satisfies Properties (i)-(iii) of Lemma \ref{lm:min_basis}, and so we can apply Lemmas \ref{lm:minor_exp} and \ref{lm:z_form}. We define the quantities $s_{K'}(C)$, for each cycle $C$ in our basis, iteratively based on cycle length, beginning with the shortest cycle. Each definition is inspired by the corresponding equations for the exact case.  \\

\noindent {\bf Case I:} $|C|=3$. \\

\noindent Let $C = i \, j \, k \, i$, $i<j<k$. Based on Equation (\ref{eqn:threecycle}) for $K_{i,j} K_{j,k} K_{k,i}$, we define
$$s_{K'}(C) := \epsilon_{i,k} \; \text{sgn}\big(\hat \Delta_i \hat \Delta_j \hat \Delta_k - \big[ \hat \Delta_i \hat \Delta_{j,k} + \hat \Delta_j \hat \Delta_{i,k}  + \hat \Delta_k \hat \Delta_{i,j}  \big]/2 +  \hat \Delta_{i,j,k}/2 \big).$$
\\

\noindent {\bf Case II:} $|C|=4$. \\

\noindent Let $C = i \, j \, k \, \ell \, i$, $i<j<k<\ell$. We note that 
\begin{align*}
    \Delta_{i,j,k,\ell} &= \Delta_{i,j} \Delta_{k,\ell} + \Delta_{i,k} \Delta_{j,\ell} + \Delta_{i,\ell} \Delta_{j,k} + \Delta_i \Delta_{j,k,\ell} + \Delta_j \Delta_{i,k,\ell} +\Delta_k \Delta_{i,j,\ell} \\
    &\quad  +\Delta_\ell \Delta_{i,j,k} - 2  \Delta_i \Delta_j \Delta_{k,\ell} - 2  \Delta_i \Delta_k \Delta_{j,\ell} - 2   \Delta_i \Delta_\ell \Delta_{j,k} - 2 \Delta_j \Delta_k \Delta_{i,\ell} \\
    &\quad - 2 \Delta_j \Delta_\ell \Delta_{i,k} - 2 \Delta_k \Delta_\ell \Delta_{i,j} + 6 \Delta_i \Delta_j \Delta_k \Delta_\ell  + Z,
\end{align*}
where $Z$ is the sum of the terms in the Leibniz expansion of $\Delta_{i,j,k,\ell}$ corresponding to four-cycles (defined in Equation (\ref{eqn:z_4cycle})). For this reason, we define
\begin{align*}
    \hat Z &=  \hat \Delta_{i,j,k,\ell} - \hat \Delta_{i,j} \hat \Delta_{k,\ell} - \hat \Delta_{i,k} \hat \Delta_{j,\ell} - \hat \Delta_{i,\ell} \hat \Delta_{j,k}   - \hat \Delta_i \hat \Delta_{j,k,\ell} - \hat \Delta_j \hat \Delta_{i,k,\ell}  \\
    &\quad -\hat \Delta_k \hat \Delta_{i,j,\ell} -\hat \Delta_\ell \hat \Delta_{i,j,k} +2 \hat \Delta_i \hat \Delta_j \hat \Delta_{k,\ell} + 2 \hat \Delta_i \hat \Delta_k \hat \Delta_{j,\ell} + 2 \hat \Delta_i \hat \Delta_\ell \hat \Delta_{j,k} \\
    &\quad +2 \hat \Delta_j \hat \Delta_k \hat \Delta_{i,\ell}  +2 \hat \Delta_j \hat \Delta_\ell \hat \Delta_{i,k} + 2\hat \Delta_k \hat \Delta_\ell \hat \Delta_{i,j} - 6 \hat \Delta_i \hat \Delta_j \hat \Delta_k \hat \Delta_\ell .
\end{align*}
We consider two cases, depending on the number of positive four-cycles in $G_{K'}[\{i,j,k,\ell\}]$. If $C$ is the only positive four-cycle in $G_{K'}[\{i,j,k,\ell\}]$, then, based on Equation (\ref{eqn:z4_solo}), we define 
\begin{align*}
    s_{K'}(C)&= \epsilon_{i,\ell} \; \text{sgn}(- \hat Z ).
\end{align*}
If there is more than one positive four-cycle in $G_{K'}[\{i,j,k,\ell\}]$, then, based on Equations (\ref{eqn:z4}) and (\ref{eqn:z4_phi}), we compute an assignment of values $\phi_1,\phi_2, \phi_3 \in \{-1,+1\}$ that minimizes (not necessarily uniquely) the quantity
$$\bigg| \hat Z /2 + \phi_1 \big| \hat K_{i,j} \hat K_{j,k} \hat K_{k,\ell} \hat K_{\ell,i} \big| + \phi_2 \big|\hat K_{i,j} \hat K_{j,\ell} \hat K_{\ell,k} \hat  K_{k,i} \big| + \phi_3 \big| \hat K_{i,k}  \hat K_{k,j} \hat K_{j,\ell} \hat K_{\ell,i} \big| \bigg|,$$
and define $s_{K'}(C) = \epsilon_{i,\ell} \; \phi_1$ in this case. In addition, if any other cycle in $G[\{i, j ,k ,\ell\}]$ is in our basis and $s_{K'}(\cdot)$ is not yet defined, we now assign this quantity to be consistent with $C$ (i.e., to be $\epsilon_{k,\ell}\epsilon_{i,k}\phi_2$ for $i \; j \; \ell \; k \; i$ and $\epsilon_{j,k} \epsilon_{i,\ell} \phi_3$ for $i \; k \; j \; \ell \; i$).  \\

\noindent {\bf Case III:} $|C|>4$. \\

\noindent Let $C = i_1\, ... \, i_k \, i_1$, $k>4$, with vertices ordered to match the ordering of Lemma \ref{lm:minor_exp}, and define $S = \{i_1,...,i_k\}$. Based on Lemma \ref{lm:minor_exp}, we define $\hat Z$ to equal
\begin{align*}
    \hat Z &= \hat \Delta_S - \hat \Delta_{i_1} \hat \Delta_{S\backslash i_1} - \hat \Delta_{i_k}\hat \Delta_{S\backslash i_k} - \big[ \hat \Delta_{i_1,i_k} - 2\hat \Delta_{i_1} \hat \Delta_{i_k}\big]\hat \Delta_{S\backslash i_1,i_k} \\
     &\quad +2 K'_{i_1,i_{k-1}} K'_{i_{k-1},i_k} K'_{i_k,i_2} K'_{i_2,i_1} \hat \Delta_{S \backslash i_1,i_2,i_{k-1},i_k} \\
    &\quad -\big[ \hat \Delta_{i_1,i_{k-1}} - \hat \Delta_{i_1} \hat \Delta_{i_{k-1}}\big]\big[ \hat \Delta_{i_2,i_k} - \hat \Delta_{i_2} \hat \Delta_{i_k}\big]\hat \Delta_{S \backslash i_1,i_2,i_{k-1},i_k} \\
    &\quad - \big[ \hat \Delta_{i_1,i_2} - \hat \Delta_{i_1} \hat \Delta_{i_2}\big]\big[\hat \Delta_{S\backslash i_1,i_2} - \hat \Delta_{i_k} \hat \Delta_{S\backslash i_1,i_2,i_k} \big] \\
    &\quad - \big[ \hat \Delta_{i_{k-1},i_k} - \hat \Delta_{i_{k-1}} \hat \Delta_{i_k}\big]\big[\hat \Delta_{S\backslash i_{k-1},i_k} - \hat \Delta_{i_1} \hat \Delta_{S\backslash i_1,i_{k-1},i_k} \big]\\
    &\quad +\big[ \hat \Delta_{i_1,i_2} - \hat \Delta_{i_1} \hat \Delta_{i_2}\big]\big[ \hat \Delta_{i_{k-1},i_k} - \hat \Delta_{i_{k-1}} \hat \Delta_{i_k}\big]\hat \Delta_{S \backslash i_1,i_2,i_{k-1},i_k},
\end{align*}
and note that the quantity 
$$\text{sgn}(K'_{i_1,i_{k-1}} K'_{i_{k-1},i_k} K'_{i_k,i_2} K'_{i_2,i_1})$$
is, by construction, computable using the signs of three- and four-cycles in our basis. Based on Lemma \ref{lm:z_form}, we set
\begin{align*}
    %&\text{sgn}\bigg(  2 (-1)^{k+1} K'_{i_k,i_1} \; \prod_{a \in T} K'_{i_a,i_{a+1}} \prod_{(a,b) \in U} \big[K'_{i_a,i_{a+1}} K'_{i_{b-1},i_b} - \epsilon_{i_a,i_{a+1}} K'_{i_{b-1},i_a} K'_{i_{a+1},i_b} \big] \bigg) = \text{sgn}(\hat Z),
    &\text{sgn}\bigg( 2 \,  (-1)^{k+1}  \, K'_{i_k,i_1} \prod_{j=1}^{k-1} K'_{i_j,i_{j+1}} \prod_{(a,b) \in U} \left[1-\frac{\epsilon_{i_a,i_{a+1}} K'_{i_{b-1},i_a} K'_{i_{a+1},i_b}}{K'_{i_a,i_{a+1}} K'_{i_{b-1},i_b}}\right] \bigg) = \text{sgn}(\hat Z),
 \end{align*}
with the set $U$ defined as in Lemma \ref{lm:z_form}. From this equation, we can compute $s_{K'}(C)$, as the quantity
$$\text{sgn}\left[1-\frac{\epsilon_{i_a,i_{a+1}} K'_{i_{b-1},i_a} K'_{i_{a+1},i_b}}{K'_{i_a,i_{a+1}} K'_{i_{b-1},i_b}}\right]$$
is computable using the signs of three- and four-cycles in our basis (see Section \ref{sec:identify} for details). In the case of $|C|>4$, we note that $s_{K'}(C)$ was defined using $O(1)$ principal minors, all corresponding to subsets of $V(C)$, and previously computed information regarding three- and four-cycles (which requires no additional querying). \\
\end{adjustwidth}

\noindent {\bf Step 3:} Define $K'$. \\

\begin{adjustwidth}{1.5em}{0pt}
We have already computed $K'_{i,i}$, $i \in [N]$, $|K'_{i,j}|$, $i \ne j$, and $s_{K'}(C)$ for every cycle in a simple cycle basis. The procedure for producing this matrix is identical to Step 5 of the \textsc{RecoverK} algorithm. \\
\end{adjustwidth}

\subsection{A Proof of Theorem \ref{thm:apx_alg}}

In this subsection, we prove the following theorem.

\begin{theorem}[Restatement of Theorem \ref{thm:apx_alg}]
Let $K \in \mathcal{K}_N(\alpha,\beta,\gamma)$ and $\{\hat \Delta_S\}_{S \subseteq [N]}$ satisfy $| \hat \Delta_S - \Delta_S(K) | \le \delta $ for all $S \subseteq [N]$, for some 
$$\delta < \alpha \min\{ \alpha^{3 \phi_G}, \beta^{3 \phi_G/2},\gamma \}/100,$$
where $G$ is the sparsity graph of $(\Delta_S)_{|S|\le 2}$. The \textsc{RecoverK$^\approx$}$\big(\{\hat \Delta_S\}, \, \delta \big)$ algorithm outputs a matrix $K'$ satisfying 
$$\rho(K,K') \le 3 \delta/2 \alpha.$$
This algorithm runs in time polynomial in $N$, and queries at most $O(N^2)$ principal minors, all of order at most $3 \, \phi_G$.
\end{theorem}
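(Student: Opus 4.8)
The plan is to follow the \textsc{RecoverK$^\approx$} algorithm step by step and show that every quantity it produces is a controlled perturbation of the corresponding quantity for $K$. First I would check that Step~1 reconstructs the charged sparsity graph exactly: by Condition~\ref{cond:approx} every edge of $G_K$ has $|K_{i,j}K_{j,i}|\ge\alpha^2$ and every non-edge has $K_{i,j}K_{j,i}=0$, while by \eqref{eqn:offdiag} the surrogate $\hat\Delta_i\hat\Delta_j-\hat\Delta_{i,j}$ differs from $K_{i,j}K_{j,i}$ by at most $3\delta$; since the smallness hypothesis forces $3\delta<\alpha^2$, the threshold in Step~1 keeps precisely the edges of $G_K$ and recovers the charges $\epsilon_{i,j}$, so $G_{K'}=G_K$ and $\phi_{G_{K'}}=\phi_G$. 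Consequently every graph computation in Step~2 is literally the one performed inside \textsc{RecoverK}, returning for each block a simple cycle basis of $\mathcal C^+$ satisfying Properties (i)--(iii) of Lemma~\ref{lm:min_basis}, all cycles of length at most $3\phi_G$, and with the number of three- and four-cycles maximized. From $\big||K'_{i,j}|^2-|K_{i,j}|^2\big|\le3\delta$ and $|K_{i,j}|\ge\alpha$ one then gets $\big||K'_{i,j}|-|K_{i,j}|\big|\le 3\delta/2\alpha$ on every edge.

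The heart of the proof is to show, by induction on cycle length, that $s_{K'}(C)=s_K(C)$ for every cycle $C$ in the basis. For $|C|=3$, the $\hat\Delta$-surrogate of $K_{i,j}K_{j,k}K_{k,i}$ in \eqref{eqn:threecycle} differs from the true value by $O(\delta)$ while $|K_{i,j}K_{j,k}K_{k,i}|\ge\alpha^3$, so its sign is correct. For $|C|=4$ with a unique positive four-cycle, $|Z|=2|K_{i,j}K_{j,k}K_{k,\ell}K_{\ell,i}|\ge 2\alpha^4$ by \eqref{eqn:z4_solo} and $|\hat Z-Z|=O(\delta)$. For $|C|=4$ with all three four-cycles positive, write $Z/2=-(T_1+T_2+T_3)$ with $T_m=\phi_m^*|T_m|$ (the correct signs); any minimizer $\phi$ of the Step~2 objective must satisfy $\big|\sum_m(\phi_m-\phi_m^*)|T_m|\big|=O(\delta/\alpha)$, but a disagreement in one coordinate costs $2|T_m|\ge2\alpha^4$, in two coordinates costs $2\big||T_m|-|T_{m'}|\big|\ge2\alpha^2\beta$ by Condition~\ref{cond:approx}(i), and in all three costs $2|\psi_1T_1+\psi_2T_2+\psi_3T_3|\ge2\gamma$ by Condition~\ref{cond:approx}(ii), so under the hypothesis $\phi_1=\phi_1^*$ and $s_{K'}(C)=\epsilon_{i,\ell}\phi_1$ (see \eqref{eqn:z4_phi}) is correct. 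For $|C|>4$, the signs of the three- and four-cycle quantities appearing in Lemmas~\ref{lm:minor_exp} and~\ref{lm:z_form} are already known correctly by induction (via Gaussian elimination inside the span of the short basis cycles when a needed four-cycle is not itself a basis element), so it remains to show $\text{sgn}(\hat Z)=\text{sgn}(Z)$. By Lemma~\ref{lm:z_form}, $Z$ equals $2(-1)^{k+1}$ times the product of the $k$ cycle-edge entries of $K$ times one bracketed factor $\big(1-\epsilon_{i_a,i_{a+1}}K_{i_{b-1},i_a}K_{i_{a+1},i_b}/(K_{i_a,i_{a+1}}K_{i_{b-1},i_b})\big)$ per crossing pair; clearing the $p$ denominators against $2p$ of the $k$ cycle edges and invoking Condition~\ref{cond:approx}(i) on each crossing pair gives $|Z|\ge 2\alpha^{\,k-2p}\beta^{\,p}$, and since $\alpha^{k-2p}\beta^{p}=\alpha^{k}(\beta/\alpha^2)^{p}$ is monotone in $p$ with $0\le p\le k/2\le 3\phi_G/2$, this is at least $2\min\{\alpha^{3\phi_G},\beta^{3\phi_G/2}\}$. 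Meanwhile $|\hat Z-Z|=O(\delta/\alpha)$, because the formula of Lemma~\ref{lm:minor_exp} has $O(1)$ terms and the only factor of $1/\alpha$ enters through the single quartic-in-$K'$ term, whose entries carry absolute error $O(\delta/\alpha)$ and whose sign is fixed correctly. In every case the factor $100$ in the hypothesis absorbs the $O(1)$ losses, and $\min\{\alpha^{3\phi_G},\beta^{3\phi_G/2},\gamma\}$ is chosen precisely so as to sit below $\alpha^{|C|}$, $\beta^{p}$, $\alpha^{2}\beta$, and $\gamma$.

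Finally I would assemble the distance bound. Since $s$ is multiplicative under cycle addition and the basis spans $\mathcal C^+(G_K)$, we get $s_{K'}(C)=s_K(C)$ for every positive simple cycle $C$; because this sign data coincides with $K$'s it is realizable, so Step~3 outputs a well-defined $K'$ with sparsity graph $G_K$, charges $\epsilon$, magnitudes within $3\delta/2\alpha$ of $K$, and the correct $s$-values. Let $\hat K$ be the magnitude-symmetric matrix with $|\hat K_{i,j}|=|K_{i,j}|$ and $\text{sgn}(\hat K_{i,j})=\text{sgn}(K'_{i,j})$ for all $i,j$: it lies in $\mathcal K$ because its entry magnitudes equal those of $K$, and it shares with $K$ the charges and every positive-simple-cycle $s$-value, so each Leibniz term of each principal minor agrees, i.e. $\Delta_S(\hat K)=\Delta_S(K)$ for all $S$ (exactly as in Section~\ref{sec:identify}). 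Thus $\hat K$ is admissible in the definition of $\rho$, and $\rho(K,K')\le|\hat K-K'|_\infty\le 3\delta/2\alpha$ (off-diagonal entries differ by $\le 3\delta/2\alpha$, diagonal by $\le\delta$). The polynomial running time and the $O(N^2)$ bound on the number of queried minors, all of order at most $3\phi_G$, are inherited from \textsc{RecoverK} (Theorem~\ref{thm:algorithm}): Step~2 is identical, and each of Cases~I--III consumes $O(1)$ minors supported on the $\le 3\phi_G$ vertices of the current cycle.

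The main obstacle is the case $|C|>4$: one must simultaneously produce a lower bound for $|Z|$ that is uniform over all admissible chord configurations -- this is exactly where the exponent $3\phi_G/2$ on $\beta$ (rather than $3\phi_G$) and the monotonicity in the number of crossing pairs are needed -- and verify that $\hat Z$ is a genuine $O(\delta/\alpha)$-perturbation of $Z$, which hinges on having already pinned down, inductively, the correct signs of the short-cycle quantities inside Lemmas~\ref{lm:minor_exp} and~\ref{lm:z_form}; without that inductive sign information those terms could be as wrong as their magnitude and the perturbation bound would collapse. The example following the theorem shows the exponents in the hypothesis are optimal up to constant factors.
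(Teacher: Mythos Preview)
Your proposal is correct and follows essentially the same approach as the paper's proof: verify $G_{K'}=G_K$ via the $3\delta<\alpha^2$ threshold, deduce the entrywise bound $3\delta/2\alpha$, and then check case by case that each $s_{K'}(C)$ matches $s_K(C)$ by comparing $|\hat Z-Z|$ against a lower bound on $|Z|$ (or the relevant gap). Your write-up is in places more explicit than the paper's---the monotonicity argument showing $\alpha^{k-2p}\beta^{p}\ge\min\{\alpha^{3\phi_G},\beta^{3\phi_G/2}\}$, and the construction of the witness $\hat K$ realizing $\rho(K,K')\le 3\delta/2\alpha$---but these are exactly the steps the paper leaves to the reader.
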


The proof of this theorem adds little insight and consists primarily of verifying that $\delta$ is small enough so that $K$ and $K'$ have the same charged sparsity graph and $s(\cdot)$ agrees for $K$ and $K'$ on all cycles in our basis. The bound that we make repeated use of is that, because $|\Delta_S|,|\hat \Delta_S|\le 1$, $S \subseteq [N]$, and $0<\delta<1$,
\begin{equation}\label{eqn:error}
\big| \Delta_{S_1} ... \Delta_{S_k} - \hat \Delta_{S_1} ... \hat \Delta_{S_k} \big| \le 1 - (1 - \delta)^k \le k \delta
\end{equation}
for any $S_1,..., S_k \subseteq [N]$, where $k \le 5$. We begin by computing element-wise errors. For diagonal entries $K'_{i,i}$,
$$|K'_{i,i} - K_{i,i}| = | \hat \Delta_i - \Delta_i| \le \delta \quad \text{for all } i \in [N].$$
For off-diagonal entries $K'_{i,j}$, $i \ne j$, we note that 
$$ |K_{i,j} K_{j,i}| \ge \alpha^2 > 6 \delta \quad \text{for all } i \ne j,$$
and so, by Step 1 of the algorithm (see Inequality (\ref{eqn:offdiag})), $\text{sgn}(K'_{i,j} K'_{j,i}) = \text{sgn}(K_{i,j} K_{j,i})$ for all $i \ne j$, and $G_K = G_{K'}$. In addition, by Inequality (\ref{eqn:offdiag}),
\begin{equation}\label{eqn:error_offdiag}
    \big| |K_{i,j}| - |K'_{i,j}| \big| = \frac{\big| |K_{i,j}K_{j,i}| - |K'_{i,j} K'_{j,i}| \big|}{ |K_{i,j}| + |K'_{i,j}|} \le \frac{3\delta}{2\alpha} \quad \text{for all } i \ne j.
\end{equation}
Using Inequality (\ref{eqn:error_offdiag}), we can produce a version of Inequality (\ref{eqn:error}) for products of off-diagonal entries, i.e., because $|K_{i,j}|,| K'_{i,j}|\le \sqrt{2}$ and $\big| |K_{i,j}| - |K'_{i,j}| \big| \le 3\delta/2\alpha < \sqrt{2}$,
\begin{equation}\label{eqn:offdiag_est}
\big| | K_{i_1,j_1} ... K_{i_k,j_k}| - |K'_{i_1,j_1} ... K'_{i_k,j_k}| \big| \le 2^{k/2}\bigg( 1 - \bigg[1 - \frac{3 \delta}{2 \sqrt{2} \alpha}\bigg]^k \bigg) \le 2^{\tfrac{k-3}{2}}  \, \frac{ 3 \, k \,  \delta}{\alpha}
\end{equation}
for any $i_1 \ne j_1,i_2 \ne j_2,...,i_k \ne j_k$, where $k\le 5$. To complete the proof, it suffices to verify that $s_K(C)$ and $s_{K'}(C)$ agree for all cycles in our basis. Similar to Step 2 of our algorithm, we will investigate cycles of length three, then four, and then of length greater than four. \\

\noindent {\bf Case I:} $|C|=3$. \\

Let $C = i \, j \, k \, i$, $i<j<k$. Using Condition \ref{cond:approx} and Inequality (\ref{eqn:error}), we have $|K_{i,j}K_{j,k}K_{k,i}| \ge \alpha^3$ and
\begin{align*}
    &\big|K_{i,j}K_{j,k}K_{k,i} -\big[ \hat \Delta_i \hat \Delta_j \hat \Delta_k - \big[ \hat \Delta_i \hat \Delta_{j,k} + \hat \Delta_j \hat \Delta_{i,k}  + \hat \Delta_k \hat \Delta_{i,j}  \big]/2 +  \hat \Delta_{i,j,k}/2 \big] \big|\\
    &\qquad\qquad \le \big| \Delta_i \Delta_j \Delta_k - \hat \Delta_i \hat \Delta_j \hat \Delta_k \big| + \big|  \Delta_i  \Delta_{j,k} - \hat \Delta_i \hat \Delta_{j,k} \big|/2 + \big|  \Delta_j \Delta_{i,k} - \hat \Delta_j \hat \Delta_{i,k} \big|/2 \\
    &\qquad\qquad \quad + \big|  \Delta_k  \Delta_{i,j} - \hat \Delta_k \hat \Delta_{i,j} \big|/2 + \big| \Delta_{i,j,k} - \hat \Delta_{i,j,k} \big|/2 \\
    &\qquad\qquad \le 3 \delta + 3 \times(2 \delta)/2 + \delta/2 < 7 \delta < \alpha^3,
\end{align*}
and so $s_{K'}(C) = s_{K}(C)$. This completes the case $|C| = 3$.\\

\noindent {\bf Case II:} $|C|=4$. \\

Now, let $C = i \, j \, k \, \ell \, i$, $i<j<k<\ell$. Using Condition \ref{cond:approx} and Inequality (\ref{eqn:error}), we have $Z \ge 2 \gamma$ ($Z$ defined in Equation (\ref{eqn:z_4cycle})) and 
\begin{align*}
    \big| Z - \hat Z \big| &\le \big| \Delta_{i,j,k,\ell} - \hat \Delta_{i,j,k,\ell} \big| + 
     \big| \Delta_{i,j} \Delta_{k,\ell} - \hat \Delta_{i,j} \hat \Delta_{k,\ell} \big| + 
      \big|\Delta_{i,k}  \Delta_{j,\ell} - \hat \Delta_{i,k} \hat \Delta_{j,\ell} \big| \\ &\quad + 
       \big|\Delta_{i,\ell} \Delta_{j,k}  - \hat \Delta_{i,\ell} \hat \Delta_{j,k}  \big| + 
        \big|\Delta_i \Delta_{j,k,\ell} - \hat \Delta_i \hat \Delta_{j,k,\ell} \big| + 
         \big| \Delta_j \Delta_{i,k,\ell}  - \hat \Delta_j \hat \Delta_{i,k,\ell}  \big| \\ &\quad + 
          \big|\Delta_k \Delta_{i,j,\ell} - \hat \Delta_k \hat \Delta_{i,j,\ell} \big| + 
           \big|\Delta_\ell \Delta_{i,j,k} - \hat \Delta_\ell \hat \Delta_{i,j,k} \big| + 2
            \big|  \Delta_i \Delta_j \Delta_{k,\ell} - \hat \Delta_i \hat \Delta_j \hat \Delta_{k,\ell}\big| \\ &\quad + 2 
             \big|\Delta_i \Delta_k \Delta_{j,\ell} - \hat \Delta_i \hat \Delta_k \hat \Delta_{j,\ell} \big| + 2
              \big| \Delta_i  \Delta_\ell \Delta_{j,k} - \hat \Delta_i \hat \Delta_\ell \hat \Delta_{j,k}\big| \\ &\quad + 2
               \big| \Delta_j \Delta_k \Delta_{i,\ell} - \hat \Delta_j \hat \Delta_k \hat \Delta_{i,\ell} \big|  + 2
                \big| \Delta_j \Delta_\ell \Delta_{i,k} - \hat \Delta_j \hat \Delta_\ell \hat \Delta_{i,k}\big|\\ &\quad + 2
                 \big| \Delta_k \Delta_\ell \Delta_{i,j} - \hat \Delta_k \hat \Delta_\ell \hat \Delta_{i,j} \big| + 6
                  \big|  \Delta_i  \Delta_j \Delta_k \Delta_\ell  - \hat \Delta_i \hat \Delta_j \hat \Delta_k \hat \Delta_\ell \big| \\
                  &\le \delta + 7 \times   2\delta  + 6 \times [ 2 \times 3 \delta] + 6 \times 4 \delta = 75 \delta < 2 \gamma,
\end{align*}
and so $\text{sgn}(Z) = \text{sgn}(\hat Z)$. If $C$ is the only positive four cycle of $G[\{i,j,k,\ell\}]$, then $s_{K'}(C) = s_{K}(C)$. If $G[\{i,j,k,\ell\}]$ has more than one positive four cycle, then there exists a unique choice of $\phi_1,\phi_2,\phi_3$ such that Equation (\ref{eqn:z4_phi}) holds, and any other choice $\phi_1',\phi_2',\phi_3'$ satisfies
\begin{align*}
    &\big| \phi_1' |K_{i,j} K_{j,k} K_{k,\ell} K_{\ell,i}| + \phi_2' |K_{i,j} K_{j,\ell} K_{\ell,k} K_{k,i}| + \phi_3' |K_{i,k} K_{k,j} K_{j,\ell} K_{\ell,i}| + Z/2 \big| \\
    &\qquad \qquad \ge \min \{ 2 \alpha^4, \, 2 \alpha^2 \beta, \, 2 \gamma\},
\end{align*}
where the lower bound $2 \alpha^4$ applies when $(\phi_1',\phi_2',\phi_3')$ and $(\phi_1,\phi_2,\phi_3)$ disagree in one variable, $ 2 \alpha^2 \beta$ applies when they disagree in two, and $2 \gamma$ applies when they disagree in three (i.e., are negations of each other). We also must bound the error due to approximately computed entries $|\hat K_{i,j}|$. Using Inequality (\ref{eqn:offdiag_est}), we have
\begin{align*}
   &\bigg| \phi_1 |K_{i,j} K_{j,k} K_{k,\ell} K_{\ell,i}| + \phi_2 |K_{i,j} K_{j,\ell} K_{\ell,k} K_{k,i}| + \phi_3 |K_{i,k} K_{k,j} K_{j,\ell} K_{\ell,i}| \\
   &\qquad - \big[ \phi_1 | K'_{i,j} K'_{j,k} K'_{k,\ell} K'_{\ell,i}| + \phi_2 |K'_{i,j} K'_{j,\ell} K'_{\ell,k} K'_{k,i}| + \phi_3 |K'_{i,k} K'_{k,j} K'_{j,\ell} K'_{\ell,i}|\big] \bigg| \le \frac{36 \, \sqrt{2} \, \delta}{\alpha}
\end{align*}
for any $\phi_1,\phi_2,\phi_3 \in \{-1,+1\}$. We now have two upper bounds, one for $|Z - \hat Z|$ and one for the error in sums of four cycles between $K$ and $\hat K$, and a lower bound for the gap between $Z$ and any choice of $(\phi_1,\phi_2,\phi_3)$ not equal to $Z$. At this point, we can now conclude that $s_{K'}(C) = s_{K}(C)$, as the lower bound for the gap is at least twice as large as the sum of the two upper bounds (properly normalized), i.e.,
$$|Z/2 - \hat Z/2| + \frac{36 \, \sqrt{2} \, \delta}{\alpha} \le \frac{75 \, \delta}{2} +  \frac{36 \, \sqrt{2} \, \delta}{\alpha} < \min \{ \alpha^4, \, \alpha^2 \beta, \, \gamma\}.$$
Here we have used the fact that $\phi_G \ge 3$ in this case.\\

\noindent {\bf Case III:} $|C|>4$. \\

Finally, let $C = i_1\, ... \, i_k \, i_1$, $k>4$, with vertices ordered to match the ordering of Lemma \ref{lm:minor_exp}, and define $S = \{i_1,...,i_k\}$. We note that we have already shown that $K$ and $K'$ agree in sign for three- and four-cycles. Using Lemma \ref{lm:z_form} and Condition \ref{cond:approx},
$$|Z| = 2 |K_{i_k,i_1}| \prod_{j=1}^{k-1} |K_{i_j,i_{j+1}}| \prod_{(a,b) \in U} \bigg|\left[1-\frac{\epsilon_{i_a,i_{a+1}} K_{i_{b-1},i_a} K_{i_{a+1},i_b}}{K_{i_a,i_{a+1}} K_{i_{b-1},i_b}}\right]\bigg| \ge 2 \alpha^{k-2|U|} \beta^{|U|}.$$

In addition, we can bound the difference between $Z$ and $\hat Z$ using a number of lengthy inequalities. In particular, we have
\begin{align*}
    \big|\Delta_S  - \hat \Delta_S \big|&\le  \delta, \\
    \big| \Delta_{i_1}  \Delta_{S\backslash i_1} -  \hat \Delta_{i_1} \hat \Delta_{S\backslash i_1}  \big|&\le 2 \delta, \\
    \big| \Delta_{i_k}\Delta_{S\backslash i_k}  -  \hat \Delta_{i_k}\hat \Delta_{S\backslash i_k}  \big|&\le 2 \delta,
\end{align*}
\begin{align*}
    &\big| \big[ \Delta_{i_1,i_k} - 2 \Delta_{i_1} \Delta_{i_k}\big]\Delta_{S\backslash i_1,i_k} - \big[ \hat \Delta_{i_1,i_k} - 2\hat \Delta_{i_1} \hat \Delta_{i_k}\big]\hat \Delta_{S\backslash i_1,i_k}  \big| \\ &\qquad \le \big|\Delta_{i_1,i_k} \Delta_{S\backslash i_1,i_k} -\hat \Delta_{i_1,i_k} \hat \Delta_{S\backslash i_1,i_k} \big| +
    2  \big|  \Delta_{i_1} \Delta_{i_k} \Delta_{S\backslash i_1,i_k} - \hat \Delta_{i_1} \hat \Delta_{i_k} \hat \Delta_{S\backslash i_1,i_k} \big| \le 8 \delta,
\end{align*}
\begin{align*}
    &2 \big|K_{i_1,i_{k-1}} K_{i_{k-1},i_k} K_{i_k,i_2} K_{i_2,i_1} \Delta_{S \backslash i_1,i_2,i_{k-1},i_k} - K'_{i_1,i_{k-1}} K'_{i_{k-1},i_k} K'_{i_k,i_2} K'_{i_2,i_1} \hat \Delta_{S \backslash i_1,i_2,i_{k-1},i_k} \big|\\
    &\qquad \le 2 \big|K_{i_1,i_{k-1}} K_{i_{k-1},i_k} K_{i_k,i_2} K_{i_2,i_1}  - K'_{i_1,i_{k-1}} K'_{i_{k-1},i_k} K'_{i_k,i_2} K'_{i_2,i_1} \big| \big| \Delta_{S \backslash i_1,i_2,i_{k-1},i_k} \big| \\ &\qquad \quad + 
    2 \big|K'_{i_1,i_{k-1}} K'_{i_{k-1},i_k} K'_{i_k,i_2} K'_{i_2,i_1}\big| \big| \Delta_{S \backslash i_1,i_2,i_{k-1},i_k} - \hat \Delta_{S \backslash i_1,i_2,i_{k-1},i_k} \big| \\
    &\qquad \le 24 \sqrt{2} \delta/\alpha +8 \delta,
\end{align*}
\begin{align*}
    &\big|\big[  \Delta_{i_1,i_{k-1}} -  \Delta_{i_1} \Delta_{i_{k-1}}\big]\big[ \Delta_{i_2,i_k} -  \Delta_{i_2}  \Delta_{i_k}\big] \Delta_{S \backslash i_1,i_2,i_{k-1},i_k} \\
    &\quad - \big[ \hat \Delta_{i_1,i_{k-1}} - \hat \Delta_{i_1} \hat \Delta_{i_{k-1}}\big]\big[ \hat \Delta_{i_2,i_k} - \hat \Delta_{i_2} \hat \Delta_{i_k}\big]\hat \Delta_{S \backslash i_1,i_2,i_{k-1},i_k} \big| \\
    &\qquad \le \big| \Delta_{i_1,i_{k-1}} \Delta_{i_2,i_k} \Delta_{S \backslash i_1,i_2,i_{k-1},i_k} - \hat\Delta_{i_1,i_{k-1}} \hat \Delta_{i_2,i_k} \hat \Delta_{S \backslash i_1,i_2,i_{k-1},i_k}\big| \\
    &\qquad \quad + \big| \Delta_{i_1,i_{k-1}}  \Delta_{i_2}  \Delta_{i_k} \Delta_{S \backslash i_1,i_2,i_{k-1},i_k} - \hat \Delta_{i_1,i_{k-1}}  \hat \Delta_{i_2}  \hat \Delta_{i_k} \hat \Delta_{S \backslash i_1,i_2,i_{k-1},i_k}\big|  \\
    &\qquad \quad + \big|\Delta_{i_1} \Delta_{i_{k-1}} \Delta_{i_2,i_k} \Delta_{S \backslash i_1,i_2,i_{k-1},i_k} - \hat \Delta_{i_1} \hat \Delta_{i_{k-1}} \hat \Delta_{i_2,i_k} \hat \Delta_{S \backslash i_1,i_2,i_{k-1},i_k}\big|  \\
    &\qquad \quad + \big|\Delta_{i_1} \Delta_{i_{k-1}}\Delta_{i_2}  \Delta_{i_k}\Delta_{S \backslash i_1,i_2,i_{k-1},i_k} - \hat \Delta_{i_1} \hat \Delta_{i_{k-1}} \hat \Delta_{i_2}  \hat \Delta_{i_k} \hat \Delta_{S \backslash i_1,i_2,i_{k-1},i_k}\big| \\
    &\qquad \le 16 \delta,
\end{align*}

\begin{align*}
    &\big|\big[ \Delta_{i_1,i_2} - \Delta_{i_1}  \Delta_{i_2}\big]\big[ \Delta_{S\backslash i_1,i_2} -  \Delta_{i_k} \Delta_{S\backslash i_1,i_2,i_k} \big] - \big[ \hat \Delta_{i_1,i_2} - \hat \Delta_{i_1} \hat \Delta_{i_2}\big]\big[\hat \Delta_{S\backslash i_1,i_2} - \hat \Delta_{i_k} \hat \Delta_{S\backslash i_1,i_2,i_k} \big] \big|\\
    &\qquad \le \big|\Delta_{i_1,i_2} \Delta_{S\backslash i_1,i_2} - \hat \Delta_{i_1,i_2} \hat \Delta_{S\backslash i_1,i_2} \big| +
    \big|\Delta_{i_1,i_2}  \Delta_{i_k} \Delta_{S\backslash i_1,i_2,i_k} - \hat \Delta_{i_1,i_2} \hat \Delta_{i_k} \hat \Delta_{S\backslash i_1,i_2,i_k}\big| \\ &\qquad \quad + 
    \big| \Delta_{i_1}  \Delta_{i_2} \Delta_{S\backslash i_1,i_2}  - \hat \Delta_{i_1} \hat \Delta_{i_2}\hat \Delta_{S\backslash i_1,i_2} \big| + 
    \big| \Delta_{i_1} \Delta_{i_2} \Delta_{i_k} \Delta_{S\backslash i_1,i_2,i_k} - \hat \Delta_{i_1} \hat \Delta_{i_2}\hat \Delta_{i_k} \hat \Delta_{S\backslash i_1,i_2,i_k}  \big| \\
    &\qquad \le 12 \delta,
\end{align*}
 and, similarly,
 
 \begin{align*}
     &\big| \big[  \Delta_{i_{k-1},i_k} -  \Delta_{i_{k-1}} \Delta_{i_k}\big]\big[ \Delta_{S\backslash i_{k-1},i_k} -  \Delta_{i_1}  \Delta_{S\backslash i_1,i_{k-1},i_k} \big]\\
     &\quad - \big[ \hat \Delta_{i_{k-1},i_k} - \hat \Delta_{i_{k-1}} \hat \Delta_{i_k}\big]\big[\hat \Delta_{S\backslash i_{k-1},i_k} - \hat \Delta_{i_1} \hat \Delta_{S\backslash i_1,i_{k-1},i_k} \big] \big| \le 12 \delta,
 \end{align*}
 and
\begin{align*}
    &\big| \big[  \Delta_{i_1,i_2} - \Delta_{i_1}  \Delta_{i_2}\big]\big[  \Delta_{i_{k-1},i_k} -  \Delta_{i_{k-1}}  \Delta_{i_k}\big] \Delta_{S \backslash i_1,i_2,i_{k-1},i_k} \\
    &\quad - \big[ \hat \Delta_{i_1,i_2} - \hat \Delta_{i_1} \hat \Delta_{i_2}\big]\big[ \hat \Delta_{i_{k-1},i_k} - \hat \Delta_{i_{k-1}} \hat \Delta_{i_k}\big]\hat \Delta_{S \backslash i_1,i_2,i_{k-1},i_k} \big|\le 16 \delta.
\end{align*}
Summing all of the above estimates, we have
$$ \big| Z - \hat Z \big| \le 77 \delta +24 \sqrt{2} \delta/\alpha< 2 \alpha^{k-2|U|} \beta^{|U|},$$
and so $Z$ and $\hat Z$ agree in sign. In addition, by Condition \ref{cond:approx},
$$\big|K_{i_a,i_{a+1}} K_{i_{b-1},i_b} -\epsilon_{i_a,i_{a+1}} K_{i_{b-1},i_a} K_{i_{a+1},i_b} \big| \ge \beta,$$
and, by Inequality (\ref{eqn:offdiag_est}),
$$\big|\big[K_{i_a,i_{a+1}} K_{i_{b-1},i_b} -\epsilon_{i_a,i_{a+1}} K_{i_{b-1},i_a} K_{i_{a+1},i_b} \big]-  \big[ K'_{i_a,i_{a+1}} K'_{i_{b-1},i_b} -\epsilon_{i_a,i_{a+1}} K'_{i_{b-1},i_a} K'_{i_{a+1},i_b}\big] \big|  \le 6 \sqrt{2}\delta/\alpha$$
for all $(a,b) \in U$, and so
$$\text{sgn}\big(K_{i_a,i_{a+1}} K_{i_{b-1},i_b} -\epsilon_{i_a,i_{a+1}} K_{i_{b-1},i_a} K_{i_{a+1},i_b}\big) = \text{sgn} \big( K'_{i_a,i_{a+1}} K'_{i_{b-1},i_b} -\epsilon_{i_a,i_{a+1}} K'_{i_{b-1},i_a} K'_{i_{a+1},i_b}\big).$$
Therefore, $s_{K'}(C) = s_{K}(C)$. This completes the analysis of the final case, and implies that $\rho(K,K')$ is given by $\max_{i,j} \big| |K_{i,j}| - |K'_{i,j}| \big| \le 3\delta/(2\alpha)$.
\end{document}